\documentclass{amsart}  
\usepackage{amsmath,amssymb,latexsym}
\usepackage{comment,nicefrac}
\usepackage{tikz,pgfplots}
\usepackage{hyperref,mathabx}
\usetikzlibrary{arrows,snakes,backgrounds}
\usepackage[export]{adjustbox}
\theoremstyle{plain}       
\newtheorem{theorem}{Theorem}[section]
\newtheorem{lemma}[theorem]{Lemma}
\newtheorem{corollary}[theorem]{Corollary}
\newtheorem{example}[theorem]{Example}
\newtheorem{definition}[theorem]{Definition}
\newtheorem{remark}[theorem]{Remark}

\usepackage[normalem]{ulem}
\usepackage{dsfont}
\renewcommand{\triangle}{{T}} 
\newcommand{\KL}{Karhunen-Lo\`{e}ve}

\newcommand{\isdef}{\mathrel{\mathrel{\mathop:}=}}

\renewcommand{\div}{\operatorname{div}}

\newcommand{\balpha}{{\boldsymbol{\alpha}}}
\newcommand{\bbeta}{{\boldsymbol{\beta}}}
\newcommand{\bgamma}{{\boldsymbol{\gamma}}}

\newcommand{\bnu}{{\boldsymbol{\nu}}}
\newcommand{\bxi}{{\boldsymbol{\xi}}}
\newcommand{\brho}{{\boldsymbol{\rho}}}
\newcommand{\bvarrho}{{\boldsymbol{\varrho}}}

\newcommand{\refd}{{\operatorname*{ref}}}
\newcommand{\Dref}{D_{\refd,\kappa}}

\renewcommand{\d}{\operatorname{d}\!}

\newcommand{\norm}[2]{\left\|{#1}\right\|_{#2}}

\newcommand{\brab}[1]{\left\{#1\right\}}
\newcommand{\brac}[1]{\left(#1\right)}

\newcommand{\supp}{\operatorname{supp}}

\def\CC{{\mathbb C}}
\def\CC{{\mathbb C}}

\def\NN{{\mathbb N}}
\def\RR{{\mathbb R}}

\def\FF{{\mathbb F}}

\def\TT{{\mathbb S}^1}
\def\EE{{\mathbb E}}
\def\ZZ{{\mathbb Z}}

\def\RRp{{\mathbb R}_+}

\newcommand{\be}{{\boldsymbol{e}}}

\newcommand{\bk}{{\boldsymbol{k}}}

\newcommand{\bp}{{\boldsymbol{p}}}

\newcommand{\bx}{{\boldsymbol{x}}}

\newcommand{\by}{{\boldsymbol{y}}}

\newcommand{\bz}{{\boldsymbol{z}}}

\newcommand{\bI}{{\boldsymbol{I}}}
\newcommand{\bQ}{{\boldsymbol{Q}}}
\newcommand{\beeta}{{\boldsymbol{\eta}}}

\def\Aa{{\mathcal A}}

\def\Cc{{\mathcal C}}

\def\Jj{{\mathcal J}}

\def\mfu{{\mathfrak u}}
\def\im{{\rm i}}
\def\rd{{\rm d}}
\newcommand{\rev}{{\rm ev}} 

\begin{document}
\title[Sparsity of PDEs on log-Gaussian random shapes]
{Elliptic PDEs on log-Gaussian Shapes:
\\ 
Sparsity and Finite Element Discretization
}

\author{Dinh D\~{u}ng}
\address{Dinh D\~{u}ng,
Information Technology Institute, Vietnam National University,
144 Xuan Thuy, Cau Giay, Hanoi, Vietnam}
\email{dinhzung@gmail.com}
\author{Helmut Harbrecht}
\address{Helmut Harbrecht,
Department of Mathematics and Computer Science,
University of Basel,
Spiegelgasse 1, 4051 Basel, Switzerland}
\email{helmut.harbrecht@unibas.ch}
\author{Van Kien Nguyen}
\address{Van Kien Nguyen,
Department of Mathematical Analysis, 
University of Transport and Communications, No.
3 Cau Giay Street,  
Hanoi, Vietnam}
\email{kiennv@utc.edu.vn}
\author{Christoph Schwab}
\address{Christoph Schwab,
	Seminar for Applied Mathematics, 
	ETH Zurich, 
	R\"amistrasse 101, 
	8092 Zurich, Switzerland}
\email{schwab@math.ethz.ch}

\maketitle
\begin{abstract}
In this article, we consider the solution to elliptic diffusion problems on 
a class of random domains obtained by log-Gaussian random homothety 
of the unit disk respectively an annulus. 
We model the problem under consideration and verify
the existence and uniqueness of the random solution by 
path-wise pullback to the nominal unit disk respectively annulus. 
We prove the analytic regularity of the solution with respect 
to the random input parameter. 
We consider the numerical approximation of the 
random diffusion problem by means of continuous, piecewise linear 
Lagrangian Galerkin Finite Elements 
with numerical quadrature in the nominal domain, 
and by sparse grid interpolation and quadrature of 
Gauss-Hermite Smolyak and Quasi-Monte Carlo type in the parameter domain.
The theoretical findings are complemented by numerical results.
\end{abstract}
%
\section{Introduction}
Domain uncertainties appear in many applications from 
engineering and sciences since the shape of the object 
under consideration may not be perfectly known. For example, 
one might think of manufacturing imperfections in the shape 
of products fabricated by line production, or shapes which 
stem from inverse problems such as tomography. 
Similarly,
the boundary of the object might not be well-defined in the 
sense that it is the result of a smooth transition between 
different materials or states. Here, one may think of 
applications in cell biology for example.

Boundary value problems on random domain have been
considered by several authors, see \cite{CK,CNT,HPS,HSS,
HSSS,Mohan,Xiu} for example. However, in all these articles, 
the domain has been modeled by bounded random variables 
to ensure the random domain under consideration does not 
degenerate. 
In contrast, we model the random domain 
by using log-Gaussian distributed random variables. 
To this end, 
we restrict ourselves to homothetic and star-shaped
random domains where the logarithm of radial function 
is a Fourier series with Gaussian coefficients.
This setup guarantees that almost every
realization of the random domain is 
homothetic to a nominal annulus resp. to the unit disc,
and hence well-defined.

Since the 
realizations of the random domain can exhibit large variations,
we apply the domain mapping approach meaning that the given 
boundary value problem on the random domain is pulled back to the 
unit disc resp. to a nominal annular domain. 
Thus, the boundary value problem posed on the 
random domain becomes a boundary value problem posed on a 
deterministic domain but with log-Gaussian random diffusion matrix 
and right-hand side. This allows to extend the  complex-variable 
methods proposed in \cite[Section 2.2]{CD} and, specifically, in 
\cite[Section 3.6]{DNSZ}, 
using holomorphic extension of the solution to the last equation to 
establish bounds for its parametric partial derivatives in the energy 
norm, 
for the analysis of Sparse Grid and Quasi-Monte Carlo integration.
\subsection{Contents}
This paper is organized as follows. 
We present the model problem and the geometry parametrization in Section~\ref{sct:PDE}. 
The model problem is formulated in Section~\ref{sct:model} and 
upon homothetic pullback on the reference 
domain in Section~\ref{sct:reference}. 
Geometry parametrization and precise analytic dependency on the input parameters 
is investigated in Section~\ref{sct:analyticity}. 
Next, in Section~\ref{sct:randomPDE}, 
we consider the situation that the input parameters are random.
Geometry uncertainty is modelled by 
tensor-product Gaussian measures on the parameter sequences.
The particular log-Gaussian domain model 
is formulated in Section~\eqref{sct:lognormal}. 
Holomorphy of the 
parameters-to-solution map is verified in \ref{sct:holomorphy},
where the precise bounds of the particular partial derivatives 
are derived in Section~\ref{sct:decay}. 
The semidiscretization 
with respect to the random parameter is the topic of Section \ref{sct:numerixSpG}.
Here, we consider the sparse grid 
interpolation of the random solution in Section~\ref{sct:interpolation}
and the quasi-Monte Carlo quadrature based on Halton points 
for directly computing the random solution's expectation in
Section~\ref{sct:QMC}. 
The finite element discretization and 
implemenation of the problem under consideration is the topic
of Section~\ref{sct:numerix}. 
The finite element discretization in the spatial variable
of a generic parameter-dependent boundary value problem
is analysed in Section~\ref{sct:FEM1}, 
while numerical
quadrature is studied in Section~\ref{sct:FEM2}. This theory
is then applied to the problem under consideration in Section
\ref{sct:FEM3}.
Numerical experiments which verify our theoretical findings are
carried out in Section~\ref{sct:experiment}. 
Finally, the conclusion is drawn in Section~\ref{sct:conclusio}.
\subsection{Notation}  
\label{sec:Notat}
We denote the natural numbers by 
$\NN = \{1,2,\ldots\}$, where $\NN_0 := \NN\cup \{0\}$. 
As usual, $\ZZ$ are the integers, $\RR$ the real numbers, 
$\RRp$ the real non-negative numbers, and $\CC$ the complex 
numbers. For a complex number $z \in \CC$, $\mathfrak{R}(z)$ 
and $\mathfrak{I}(z)$ denote the real and imaginary parts of $z$, 
respectively.

Denote by $\RR^\NN$ and $\RR^\ZZ$ the sets of all sequences 
$\by = (y_j)_{j\in \NN}$ and $\by = (y_j)_{j\in \ZZ}$, respectively, with $y_j\in \RR$. 
For a nonnegative sequence $\brho = (\rho_j)_{j\geq 1} \in [0,\infty)^\NN$,
we denote its support $\supp(\brho) = \{ j\in \NN: \rho_j >0 \}$.
With $|\by|_0$ we count the number of nonzero components 
$y_j$ of $\by = (y_j)_{j\in \NN}$. 
The set $\FF$ consists of all sequences 
of non-negative integers $\bnu=(\nu_j)_{j \in \NN}$ 
such that 
$\supp(\bnu) := \{j \in \NN: \nu_j >0\}$ 
is a finite set.  
For $\bnu, \bk \in \FF$ and $k \in \NN$, 
$|\bnu|_1 := \sum_{j \in \NN} \nu_j$,
$|\bnu|_\infty := \max\{\nu_j, j\in \NN\}$, 
and
\[
\bnu^k := \prod_{j \in \supp(\bnu)} \nu_j^k,\qquad 
\bnu^\bk := \prod_{j \in \supp(\bnu)} \nu_j^{k_j}, \qquad
\bnu!:= \prod_{j \in \NN} \nu_j!.
\]

If  $\balpha= (\alpha_j)_{j \in \Jj}$  is a set of positive numbers with any 
index set $\Jj \subseteq \NN$, 
we use the notation $\balpha^{-1}:= (a_j^{-1})_{j \in \Jj}$.
Given the sets 
$\balpha= (a_j)_{j \in \Jj}$ and $\bbeta= (\beta_j)_{j \in \Jj}$,
we define 
$\balpha^\bbeta:= \big(\alpha_j^{\beta_j}\big)_{j \in \Jj}$.

We use the letters $C$  and $K$ to denote general positive constants 
which may take different values, and $C_{\alpha,\beta,\dots}$ and 
$K_{\alpha,\beta,\dots}$ constants depending on $\alpha,\beta,\dots$.
For the quantities $A_n(f,\bk, \by,\ldots)$ and $B_n(f,\bk, \by,\ldots)$ depending on $n,f,\bk,\by,\ldots$,  
we write  $A_n(f,\bk,\by,\ldots) \lesssim B_n(f,\bk,\by,\ldots)$
if there exists a constant $C >0$ independent of $n,f,\bk, \by,\ldots$  such that 
$A_n(f,\bk,\by,\ldots) \le CB_n(f,\bk,\by,\ldots)$.
Finally, we denote by $|G|$ the cardinality of the set $G$.
\section{Elliptic diffusion problems on variable domains}
\label{sct:PDE}
\subsection{A class of homothetic parametric domains}
\label{sct:model}
We shall first model the class of random domains under 
consideration. The model we use is in accordance with 
\cite{HPS}, where however uniformly distributed random 
variables on a compact domain have been considered for 
the input parameters. Since log-Gaussian random domains 
can become unbounded, we restrict ourselves here to 
homothetic domains in order to ensure that the domains 
under consideration are always well-defined. 

To this end, we consider the parametric Poisson equation
\begin{equation}\label{eq:PoissionD_a}
	-\Delta u(a) = f\ \text{in $D_\kappa(a)$},\quad
	u(a) = 0 \ \text{on $\partial D_\kappa(a)$},
\end{equation}
where $f \in L^2(\RR^2)$ and 
$\kappa \in [0,1)$ and $a$ describe 
the shape of the domain $D_\kappa(a) \subset \RR^2$ in polar coordinates 
according to the homothety $r\mapsto a(\theta) r$, i.e.,
\begin{equation}\label{D_a}
D_\kappa(a) :=  
    {\rm int} \brab{\bx = (r \cos \theta, r \sin \theta) \in \RR^2: \ 0 \leq \kappa a(\theta) \leq  r < a(\theta) }.
\end{equation}
Here, ``${\rm int}$'' denotes the interior of a set. 
In \eqref{D_a}, we take $a \in   W^1_\infty(\TT)$, 
understood to be positive, and $2\pi$-periodic,
where $\TT$ is the unit circle and $W^1_\infty(\TT)$
is the space of $2\pi$-periodic, real-valued, 
Lipschitz-continuous functions on $\TT$,
equipped with the norm
\[
	\norm{a}{W^1_\infty(\TT)} :=  \norm{a}{L^\infty(\TT)} +  \norm{a'}{L^\infty(\TT)}.
\]
When 
$\norm{a}{{W^1_\infty(\TT)}}$ 
is finite and $a(\theta) > 0, \ \theta \in \TT$, 
the domain $D_0(a)$ is a bounded domain.
In particular, 
the domain $D_\kappa(a)$ is Lipschitz since the boundary
admits an global representation with respect to the polar angle 
as graph of the Lipschitz smooth radial function. 
Thus, there 
exists a unique weak solution $u(a) \in H^1_0\big({D_\kappa(a)}\big)$ to 
\eqref{eq:PoissionD_a}, satisfying 
the variational formulation
\begin{equation}\label{eq:varformD_a}
	\int_{D_\kappa(a)} \nabla u(a) \cdot\nabla v  \ = \ \langle f,v\rangle, 
		\quad v \in H^1_0\big({D_\kappa(a)}\big).
\end{equation} 
We remark that for $0<\kappa<1$, $\partial D_\kappa(a)$ 
has two components, one of them with arbitrary large curvature as $\kappa\downarrow 0$.

\subsection{Pullback of the solution to a reference domain}
\label{sct:reference}
In order to study the parameter dependent problem 
\eqref{eq:PoissionD_a} and \eqref{D_a}, we transform
it to a parameter independent, 
fixed \emph{reference domain} $\Dref$.
Throughout this article, we consider 
for $0\leq \kappa < 1$ the reference domain
\begin{equation}\label{eq:Dref}
	\Dref := {\rm int} \big\{\bxi\in\mathbb{R}^2: \kappa \le \|\bxi\|_2<1\big\}
	\;.
\end{equation}
For $\kappa = 0$, $\Dref$ is the open unit disc, 
and $\Dref$ and $D_\kappa(a)$ are star-shaped with respect to the origin.
For $0 < \kappa < 1$, $\Dref$ is an annulus.

Given $a$ and $\kappa \in [0,1)$,
let $F(a)$ be the homothetic transform 
which maps  $\Dref$ onto $D_\kappa(a)$ 
that is defined by
\begin{equation}\label{eq:dom_map}
F(a)(\bxi):= F(a)(r \cos \theta, r \sin \theta)
:= 
\big(a(\theta)r \cos \theta, a(\theta)r \sin \theta\big)
\end{equation}
for $\bxi = (r \cos \theta, r \sin \theta) \in \Dref$. 
The Jacobian matrix of $F(a)(\bxi )$ is given by 
\begin{align*}
	\frac{\d F(a)}{\d\bxi}(\bxi) &= a(\theta)
	\begin{bmatrix} \cos \theta & -\sin \theta \\ \sin \theta & \cos \theta\end{bmatrix}
	\begin{bmatrix}
		1 &  h(\theta) \\
		0 &  1 \end{bmatrix} 
	\begin{bmatrix} \cos \theta & \sin \theta \\ -\sin \theta & \cos \theta\end{bmatrix}
\end{align*}	
with
\begin{equation}
	\label{h(theta)}
	h(\theta):= \frac{a'(\theta)}{a(\theta)},
\end{equation}
where $a'(\theta)$ denotes the derivative with respect to variable $\theta$.
The determinant of the Jacobian is
\begin{equation}\label{J_a}
J(a)(\bxi) = \det\bigg(\frac{\d F(a)}{\d\bxi}(\bxi)\bigg) = a(\theta)^2  > 0,
\end{equation} 
which implies that the map $F(a)(\bxi)$ is one-to-one.

We denote the pullback solution of \eqref{eq:PoissionD_a} by
\begin{equation}\label{eq:u(a)} 
	\hat{u}(a)  = u(a) \circ F(a) \in V, \;\; \mbox{where} \;\; V:= H^1_0(\Dref)\;.
\end{equation}
Utilizing $F(a)$ as a change of variable in \eqref{eq:varformD_a}, 
$\hat{u}$ solves
\begin{equation}\label{eq:varformD}
\hat{u}(a) \in V: \quad 
	\int_{\Dref} M(a) \nabla \hat{u}(a) \cdot\nabla v  \ 
	= \ \langle f_{\refd}(a),v\rangle \ \ \forall v \in   V.
\end{equation} 
This is the variational formulation of the equation
\begin{equation}\label{eq:problemD}
	-\div\big(M(a)\nabla \hat{u}(a)\big) 
	= f_{\refd}(a)\ \text{in $\Dref$},\quad
	\hat{u}(a) = 0 \ \text{on $\partial \Dref$}
\end{equation}
(for $0<\kappa<1$, $\partial\Dref = \kappa \TT \cup \TT$ and for $\kappa = 0$, $\partial \Dref = \TT$) 
 with the diffusion matrix
\begin{equation}\label{M_a}
\begin{aligned}
	M(a)(\bxi ) :=& J(a)(\bxi)\bigg(\frac{\d F(a)}{\d\bxi}(\bxi)\bigg)^{-1}
		\bigg(\frac{\d F(a)}{\d\bxi}(\bxi)\bigg)^{-\intercal} \\
	=& \begin{bmatrix} \cos \theta & -\sin \theta \\ \sin \theta & \cos \theta\end{bmatrix}
	\begin{bmatrix}
		1 + h(\theta)^2 & - h(\theta)\\
		- h(\theta) &  1
	\end{bmatrix}
	\begin{bmatrix} \cos \theta & \sin \theta \\ -\sin \theta & \cos \theta\end{bmatrix}
\end{aligned}
\end{equation}
and the right-hand side
\begin{equation}\label{f_a}
	f_{\refd}(a)(\bxi):= J(a)(\bxi)(f\circ F(a)(\bxi)).
\end{equation}

Note that the equation \eqref{eq:PoissionD_a} on the random 
domain $D_\kappa(a)$ and the equation \eqref{eq:problemD} on the 
deterministic domain $\Dref$ are in different Cartesian coordinate 
systems: \eqref{eq:PoissionD_a} is in $\bx$-coordinates, while 
\eqref{eq:problemD} is in $\bxi$-coordinates. The variables $\bx$ 
and $\bxi$ are connected by the equation $\bx= F(a)(\bxi)$ of
the change of variables. For simplicity, $\bx$ and $\bxi$ are 
omitted in the sequel in some equations and expressions.
 
\subsection{Parametric analyticity of the pullback solution}
\label{sct:analyticity}
Throughout the rest of this article, we assume 
\begin{equation}\label{eq:fAn}
f \; \mbox{is a fixed, real analytic function on}\; \RR^2
\;
\mbox{and}
\; 
a\in W^1_\infty(\TT;\CC)\;.
\end{equation}
We define the complex extension of the weak solution as 
\begin{equation}\label{eq: general}
\hat{u}(a) \in {V_\CC}: \quad 
	B\big(\hat{u}(a),v;a\big) = L(v;a) \quad \forall v\in {V_\CC}\;,
\end{equation} 
where $V_\CC = H^1_0(D_{\refd,\kappa};\CC)$ and with $\bar{v}$ denoting the complex conjugate for $v\in V_\CC$. 
\begin{equation}\label{BL}
	B\big(\hat{u}(a),v;a\big)=\int_{\Dref} M(a) \nabla \hat{u}(a) \cdot\nabla \bar v  
	\quad\text{and}\quad
	L(v;a) =  \langle f_{\refd}(a), \bar v\rangle.
\end{equation} 

Due to the expressions of $M(a)$ in \eqref{M_a} and of $J(a)$ 
in \eqref{J_a}, and the assumption that $f$ is real analytic, the 
mapping $a \mapsto L(\cdot;a)$ is holomorphic from $W^1_\infty(\TT;\CC)$ to $V_\CC'$, 
and $a \mapsto B(\cdot,\cdot;a)$ is linear, hence holomorphic, 
from $L^\infty(D_{\refd,\kappa};\CC)$ to $\mathfrak{B}(V_\CC , V_\CC)$, 
the \emph{bilinear} forms  $V_\CC \times V_\CC \to \CC$. 
It is also holomorphic from 
$W^1_\infty(\TT;\CC) \subset L^\infty(\TT;\CC)$ to $\mathfrak{B}(V_\CC , V_\CC)$.

Let the real symmetric matrix $R(a)(\bxi)$ be defined by 
$$
R(a)(\bxi) := 
\mathfrak{R}\big(M(a)(\bxi)\big),
$$
and let 
$\lambda_{\operatorname{min}} {(a)(\bxi)}$ 
resp.~$\lambda_{\operatorname{max}} (a)(\bxi)$ 
denote its smallest resp.~largest eigenvalue. 
Denote
\[
\lambda_{\min}(a) := \min_{\bxi\in \Dref} \lambda_{\operatorname{min}}(a)(\bxi),
\quad
\lambda_{\max}(a) := \max_{\bxi\in \Dref} \lambda_{\operatorname{max}}(a)(\bxi).
\]
Then, we have
\begin{equation}\label{BL-Re}
	\mathfrak{R}\big(B(v,v;a)\big) 
        = 
        \int_{\Dref} R(a) \nabla v \cdot\nabla \bar v
	\ge 
	\lambda_{\operatorname{min}}(a) \norm{v}{V_\CC}^2
\end{equation}
for any $v \in V_\CC$. 
Therefore, the coercivity condition 
\begin{equation}\label{BL-CorCond}
	|B(v,v;a)|\ge \lambda_{\operatorname{min}}(a) \norm{v}{{V_\CC}}^2
\end{equation}
holds.
If  $\lambda_{\operatorname{min}}(a) > 0$, 
by the Lax-Milgram lemma, we obtain
\begin{equation} \label{norm{hat{u}(a)}{V}1}
	\norm{\hat{u}(a)}{{V_\CC}} 
	\le 
\lambda_{\min}(a)^{-1} \norm{f_{\refd}(a)}{L^2(\Dref;{\CC})}.
\end{equation} 
Next, we derive a bound on $\lambda_{\min}(a)^{-1}$.

\begin{lemma}
Assume that $a\in W^1_\infty(\TT;\CC)$ and 
let $h(\theta)$ be defined in \eqref{h(theta)}.
Assume also that
	\[
	\|\mathfrak{I}(h(\theta))\|_{L^\infty(\TT)} < 1.
	\]
Then there holds
\begin{equation} \label{lambda_min}
\lambda_{\min}(a)^{-1} 
\leq 
\frac{2+\|\mathfrak{R}(h(\theta))\|_{L^\infty(\TT)}^2}{1-\|\mathfrak{I}(h(\theta))\|_{L^\infty(\TT)}^2}
\end{equation} 
and
\begin{equation} \label{lambda_max}
	\lambda_{\max}(a)
	\leq 2+\|\mathfrak{R}(h(\theta))\|_{L^\infty(\TT)}^2.
\end{equation} 
\end{lemma}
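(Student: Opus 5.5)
The plan is to compute the eigenvalues of $R(a)(\bxi)$ pointwise, using the factorization \eqref{M_a}, and then take the extremum over $\bxi\in\Dref$. The outer factors in \eqref{M_a} are real rotation matrices $Q(\theta)$ and $Q(\theta)^\intercal$. Hence taking the real part commutes with conjugation by $Q$:
\[
R(a)(\bxi)=Q(\theta)\,\mathfrak{R}\begin{bmatrix}1+h(\theta)^2 & -h(\theta)\\ -h(\theta) & 1\end{bmatrix}Q(\theta)^\intercal .
\]
So $R(a)(\bxi)$ has the same eigenvalues as the real part of the inner $2\times 2$ matrix. Note that $h$ is complex-valued here, since $a\in W^1_\infty(\TT;\CC)$.

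Write $h(\theta)=x+\im y$ with $x=\mathfrak{R}(h(\theta))$ and $y=\mathfrak{I}(h(\theta))$. Then $\mathfrak{R}(h^2)=x^2-y^2$, and the real symmetric matrix becomes
\[
S:=\begin{bmatrix}1+x^2-y^2 & -x\\ -x & 1\end{bmatrix}.
\]
Its invariants are $\operatorname{tr}S=2+x^2-y^2$ and $\det S=1+x^2-y^2-x^2=1-y^2$.

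By the hypothesis $\|\mathfrak{I}(h)\|_{L^\infty(\TT)}<1$ we have $y^2<1$. Therefore $\det S>0$ and $\operatorname{tr}S\ge 1+x^2>0$, so both eigenvalues of $S$ are positive. Positivity of both eigenvalues gives the two pointwise bounds
\[
\lambda_{\max}(a)(\bxi)\le\operatorname{tr}S\le 2+x^2,
\qquad
\lambda_{\min}(a)(\bxi)=\frac{\det S}{\lambda_{\max}(a)(\bxi)}\ge\frac{1-y^2}{2+x^2}.
\]

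Finally, bound $x^2\le\|\mathfrak{R}(h)\|_{L^\infty(\TT)}^2$ and $y^2\le\|\mathfrak{I}(h)\|_{L^\infty(\TT)}^2$ uniformly in $\theta$. Taking the maximum, respectively minimum, over $\bxi\in\Dref$ yields \eqref{lambda_max} and, after inversion, \eqref{lambda_min}. Since $a\in W^1_\infty$ only, the extrema should be read as essential supremum/infimum; this changes nothing in the argument.

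There is no real obstacle. The only point needing care is that $\mathfrak{R}(M)$ is $Q\,\mathfrak{R}(\cdot)\,Q^\intercal$ because $Q$ is real. The other point is checking the sign of the trace, so that $\lambda_{\max}\le\operatorname{tr}S$ and $\lambda_{\min}=\det S/\lambda_{\max}$ are legitimate.
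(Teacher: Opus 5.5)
Your proof is correct and is essentially the paper's argument. Both work with the real symmetric matrix whose trace is $2+\mathfrak{R}(h)^2-\mathfrak{I}(h)^2$ and whose determinant is $1-\mathfrak{I}(h)^2$, and the paper's rationalized quadratic-formula expression $\lambda_{\min}=2C/(B+\sqrt{B^2-4C})$ is exactly your $\det S/\lambda_{\max}$. Your bound $\lambda_{\max}\le\operatorname{tr}S$, justified by positivity of both eigenvalues, makes the paper's final estimate, and its ``complete analogy'' for \eqref{lambda_max}, more direct.
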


\begin{proof}
The eigenvalues of $R(a)(\bxi)$ are the roots of the quadratic equation 
\[
	\lambda^2 - B\lambda + C
\]
and, with $\mathfrak{R}(h)^2$ denoting $\mathfrak{R}(h^2)$, that
\[
  C:= 1 - \mathfrak{I}(h(\theta))^2, \quad 
	B:= 2 + \mathfrak{R}(h(\theta))^2 - \mathfrak{I}(h(\theta))^2.
\]
Therefore, 
\begin{align*}
		\lambda_{\operatorname{min}}{(a)(\bxi)}
		& =\frac{2 + \mathfrak{R}(h(\theta))^2 - \mathfrak{I}(h(\theta))^2
		-\sqrt{(\mathfrak{R}(h(\theta))^2 -\mathfrak{I}(h(\theta))^2 )^2 +4 \mathfrak{R}(h(\theta))^2 }}{2}
		\\
		& =
		\frac{2-2\mathfrak{I}(h(\theta))}{2 + \mathfrak{R}(h(\theta))^2 
		- \mathfrak{I}(h(\theta))^2+\sqrt{(\mathfrak{R}(h(\theta))^2 -\mathfrak{I}(h(\theta))^2 )^2 +4 \mathfrak{R}(h(\theta))^2 }}  .
\end{align*}
Then we have the estimate
\begin{equation*}
	\lambda_{\operatorname{min}}(a)(\bxi)^{-1}
		\leq \frac{2 + \mathfrak{R}(h(\theta))^2   
                + \sqrt{ \mathfrak{R}(h(\theta))^4 
		+ 4 \mathfrak{R}(h(\theta))^2 }}{2-2\mathfrak{I}(h(\theta))} 
		\leq \frac{2+\mathfrak{R}(h(\theta))^2}{1-\mathfrak{I}(h(\theta))^2},
\end{equation*}
which implies
\begin{equation*} \label{lambda_min_2}
	\lambda_{\min}(a)^{-1} 
         \leq \frac{2+\|\mathfrak{R}(h(\theta))\|_{L^\infty(\TT)}^2}
	           {1-\|\mathfrak{I}(h(\theta))\|_{L^\infty(\TT)}^2}.
\end{equation*} 

The bound \eqref{lambda_max} is derived in complete analogy.
\end{proof}

From \eqref{norm{hat{u}(a)}{V}1} and \eqref{lambda_min}, 
we obtain
\begin{equation} \label{norm{hat{u}(a)}{V}}
	\norm{\hat{u}(a)}{V_\CC} \le \frac{2+\|\mathfrak{R}(h(\theta))\|_{L^\infty(\TT)}^2}
	{1-\|\mathfrak{I}(h(\theta))\|_{L^\infty(\TT)}^2}\norm{f_{\refd}(a)}{L^2(\Dref;\CC)}.
\end{equation} 
We also have the estimate
\begin{equation}\label{eq:est-rhs}
\begin{aligned}
\norm{f_{\refd}(a)}{L^2(\Dref;\CC)}
&	= \|J(a)(f\circ F(a))\|_{L^2(\Dref;\CC}) 
\\
	&\le \sqrt{\|J(a)\|_{L^\infty(\Dref;\CC)}}\ \|f\|_{L^2({D_\kappa(a)})}
\\
	&\leq \|a(\theta)\|_{L^\infty(\TT;\CC)}\ \|f\|_{L^2(\RR^2)}.
\end{aligned}
\end{equation}
Thus, we finally arrive at
\begin{equation}\label{hat u <}
  \norm{\hat{u}(a)}{V_\CC} \le \frac{2+\|\mathfrak{R}(h(\theta))\|_{L^\infty(\TT)}^2}
  {1-\|\mathfrak{I}(h(\theta))\|_{L^\infty(\TT)}^2}
  \|a(\theta)\|_{L^\infty(\TT;\CC}) \norm{f}{L^2(\RR^2)}.
\end{equation} 

Let $\Aa\subset W^1_\infty(\TT;\CC)$ be the set of all $a\in W^1_\infty(\TT;\CC)$
which are lower bounded away from zero
and such that 
\begin{equation}\label{BL-Im}
	|\mathfrak{R}(h(\theta))| < \infty, \ \ \ |\mathfrak{I}(h(\theta))| < 1,\ \ \
	h(\theta):= \frac{a'(\theta)}{a(\theta)},\ \ \theta \in \TT.
\end{equation}
From \cite[Theorem 2.1 and Corollary 2.4]{CD} we 
obtain, using \eqref{eq:fAn}, the following.

\begin{lemma} \label{lemma:hol-hat{u}(a)}
For any $0\leq \kappa<1$, 
the pullback solution $\hat{u}(a)\in V_\CC$ exists for every $a \in \Aa$. 
Moreover, the map
\[
	a\mapsto \hat{u}(a)
\]
is holomorphic over a neighborhood of $\mathcal{A}$.
\end{lemma}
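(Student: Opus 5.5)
The plan is to apply the abstract holomorphy result \cite[Theorem 2.1 and Corollary 2.4]{CD}. Roughly, it says the following. Suppose $a\mapsto B(\cdot,\cdot;a)$ and $a\mapsto L(\cdot;a)$ are holomorphic on an open set of a complex Banach space. Suppose also that $B(\cdot,\cdot;a)$ is uniformly coercive on that set. Then the solution map $a\mapsto \hat u(a)$ is holomorphic there. So two things must be checked: existence on $\Aa$, and the hypotheses of that result on an open neighborhood $\mathcal{O}\supset\Aa$ in $W^1_\infty(\TT;\CC)$.

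\emph{Existence.} Fix $a\in\Aa$. Since $a$ is bounded away from zero and lies in $W^1_\infty$, the function $h=a'/a$ belongs to $L^\infty(\TT;\CC)$. I read the conditions \eqref{BL-Im} as the uniform bounds $\|\mathfrak{R}(h)\|_{L^\infty}<\infty$ and $\|\mathfrak{I}(h)\|_{L^\infty}<1$. The preceding lemma then gives $\lambda_{\min}(a)>0$ via \eqref{lambda_min}. The forms $B(\cdot,\cdot;a)$ and $L(\cdot;a)$ are continuous, by \eqref{lambda_max}, the boundedness of the entries of $M(a)$, and \eqref{eq:est-rhs}. Lax--Milgram with \eqref{BL-CorCond} yields a unique $\hat u(a)\in V_\CC$ satisfying \eqref{hat u <}.

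\emph{Neighborhood and holomorphy of the data.} For $a\in\Aa$, set $\delta:=1-\|\mathfrak{I}(h)\|_{L^\infty}>0$ and $m:=\operatorname{ess\,inf}|a|>0$. The map $b\mapsto b'/b$ is continuous, indeed holomorphic, from $\{b\in W^1_\infty(\TT;\CC):\operatorname{ess\,inf}|b|>m/2\}$ into $L^\infty(\TT;\CC)$, because $b\mapsto 1/b$ is holomorphic on $L^\infty$ away from zero. Hence a small $W^1_\infty$-ball around $a$ keeps $\operatorname{ess\,inf}|b|\ge m/2$, $\|\mathfrak{I}(h_b)\|_{L^\infty}\le 1-\delta/2$ and $\|\mathfrak{R}(h_b)\|_{L^\infty}\le \|\mathfrak{R}(h)\|_{L^\infty}+1$. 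On this ball, \eqref{lambda_min} gives a uniform lower bound on $\lambda_{\min}(b)$, so $B$ is uniformly coercive there. Let $\mathcal O$ be the union of these balls. It is open and contains $\Aa$.

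Next, $M(b)$ is a matrix polynomial in $h_b$ with smooth $\theta$-dependent coefficients, by \eqref{M_a}. So $b\mapsto M(b)$ is holomorphic $\mathcal O\to L^\infty(\Dref;\CC^{2\times2})$, and hence $b\mapsto B(\cdot,\cdot;b)$ is holomorphic into $\mathfrak B(V_\CC,V_\CC)$. For $L$, we have $f_{\refd}(b)=b(\theta)^2 f(b(\theta)r\cos\theta,b(\theta)r\sin\theta)$. Here \eqref{eq:fAn} is used: $f$ extends holomorphically to a complex neighborhood of $\RR^2$. I expect the main obstacle here. One must ensure that the complex argument $b(\theta)r(\cos\theta,\sin\theta)$ stays inside the domain of holomorphy of $f$, with $f$ bounded there. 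I would first try to shrink the balls so that $|\mathfrak{I}(b)|$ is small compared with the width of that domain over the bounded set $\{|\bx|\le \|a\|_{L^\infty}+1\}$. Holomorphy of the composition $b\mapsto f\circ F(b)$ into $L^\infty(\Dref)\subset V_\CC'$ then follows from the chain rule for holomorphic maps, or by checking Gateaux holomorphy together with local boundedness.

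\emph{Conclusion.} The hypotheses of \cite[Theorem 2.1]{CD} now hold on $\mathcal O$: holomorphic $B$ and $L$, and locally uniform coercivity. That result gives holomorphy of $b\mapsto\hat u(b)$ on $\mathcal O$. Concretely, the derivative in a direction $\eta$ is the solution $w$ of $B(w,v;b)=L'(v;b)[\eta]-B'(\hat u(b),v;b)[\eta]$, and the difference quotient converges by the uniform inf-sup bound. Corollary 2.4 there provides the local boundedness required.
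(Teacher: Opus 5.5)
Your route is the paper's. Its entire proof is the citation of \cite[Theorem 2.1 and Corollary 2.4]{CD} together with \eqref{eq:fAn}. Your treatment of the bilinear form is correct: the uniform reading of \eqref{BL-Im}, holomorphy of $b\mapsto b'/b$ into $L^\infty(\TT;\CC)$ and hence of $b\mapsto M(b)$, and a uniform coercivity constant on small $W^1_\infty$-balls via \eqref{lambda_min}.

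The step that fails is the one you flagged for the load. Shrinking a ball around $a$ only makes $\|\mathfrak{I}(b-a)\|_{L^\infty(\TT)}$ small, not $\|\mathfrak{I}(b)\|_{L^\infty(\TT)}$. But $\Aa$ contains genuinely complex radii far from the real axis. For example, $a\equiv 2\im$ has $h\equiv 0$, so $a\in\Aa$, yet $F(a)(\bxi)=2\im\bxi$. Take $\kappa=0$ and the real-analytic $f(\bx)=\big((1+x_1^2)(1+x_2^2)\big)^{-1}\in L^2(\RR^2)$. Then $f_{\refd}(a)(\bxi)=-4\big((1-4\xi_1^2)(1-4\xi_2^2)\big)^{-1}$, which is not integrable near the lines $\xi_1=\pm\frac12$ inside the unit disc. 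So $L(\cdot;a)\notin V_\CC'$, and no choice of ball helps. For the same reason, your appeal to \eqref{eq:est-rhs} for continuity of $L(\cdot;a)$ is invalid at complex $a$. That bound rests on the real change of variables $\bx=F(a)(\bxi)$, which has no meaning when $a$ is complex.

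Hence \eqref{eq:fAn} alone does not suffice. The paper's one-line proof passes over exactly this point, so you must add a hypothesis. One option is to require that $f$ extend holomorphically to an open set $\mathcal{N}\subset\CC^2$ containing $F(a)(\overline{\Dref})$ for every $a\in\Aa$ (e.g.\ $f$ entire). The other is to shrink $\Aa$ to those $a$ with $F(a)(\overline{\Dref})\subset\mathcal{N}$. Since $F(a)(\overline{\Dref})$ is compact and $\sup_{\bxi}|F(b)(\bxi)-F(a)(\bxi)|\le\|b-a\|_{L^\infty(\TT)}$, this is an open condition. On a small ball, $F(b)$ then takes values in a fixed compact subset of $\mathcal{N}$. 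That gives both an $L^\infty(\Dref)$ bound on $f_{\refd}(b)$, replacing \eqref{eq:est-rhs}, and holomorphy of $b\mapsto f_{\refd}(b)$. With this, your appeal to \cite{CD} goes through.
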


\section{Random variations of the domain}
\label{sct:randomPDE}
\subsection{Log-Gaussian shape parametrization}
\label{sct:lognormal}
In the following, we consider the Poisson problem 
\eqref{eq:PoissionD_a} with $D_\kappa(a)$, $0\leq \kappa < 1$ as defined in \eqref{D_a} 
and 
$a$ depending on $\by=(y_k)_{k\in \NN}$ in the log-Gaussian form
\begin{equation}\label{a=exp}
	a(\by)(\theta) = \exp\Bigg(\sum_{k \in \NN}
	y_k  \psi_k(\theta)  \Bigg), \quad\theta \in \TT, \quad
	\by = \brac{y_k}_{k \in \NN} \in \RR^\NN.
\end{equation}
Here $y_k$ are i.i.d.~standard Gaussian random variables 
on $\RR$, and $\psi_k \in W^1_\infty(\TT)$. 
With \eqref{a=exp},
we rewrite \eqref{eq:PoissionD_a} in the parametric form
\[
	-\Delta u(\by) = f\ \text{in $D_\kappa(\by)$},\quad
	u(\by) = 0 \ \text{on $\partial D_\kappa(\by)$},
\]
where 
\[
      D_\kappa(\by):=  {\rm int} 
             \brab{\bx=(r \cos \theta, r \sin \theta): \ 0 \le \kappa a(\by)(\theta) \leq r < a(\by)(\theta) }.
\]
I.e., for any choice of $\by$, $D_\kappa(\by)$ is obtained by a 
    homothetic radial scaling $r\mapsto a(\theta)r$ of $D_{\refd,\kappa}$,
    which is either an annulus when $0<\kappa < 1$ or the unit circle for $\kappa=0$.
    In the latter case, $D_\kappa(\by)$ is contractible and star-shaped with respect to the origin 
    for any choice of $\by$.
Randomness of $D_\kappa(\by)$ will be modelled by 
considering $\by\in \RR^\NN$ endowed with a Gaussian measure $\bgamma$ (GM for short). 

Let $\gamma(y)$ be the standard Gaussian probability measure on $\RR$ 
with the  density 
\begin{equation} \label{g}
	\rho(y):=\frac 1 {\sqrt{2\pi}} e^{-y^2/2}.
\end{equation}
We recall (e.g. \cite{Bogachev}) 
the concept of
standard Gaussian probability measure $\bgamma(\by)$ on $\RR^{\NN}$ 
as countable tensor product of the Gaussian measures $\gamma(y_j)$:
\begin{equation} \label{eq:GM}
	\bgamma(\by) 
	:= \ 
	\bigotimes_{j \in \NN} \gamma(y_j) , \quad \by = (y_j)_{j \in \NN} \in \RR^{\NN}.
\end{equation}
If $X$ is a separable Banach space, 
the standard Gaussian probability measure $\bgamma$ in \eqref{eq:GM} on $\RR^{\NN}$ 
induces  the Bochner space $L^2(\RR^{\NN},X;\bgamma)$ of  
strongly $\bgamma$-measurable mappings $v$ from $\RR^{\NN}$ to $X$, 
equipped with the norm
\begin{equation} \label{L_2(RRi,X,gamma)}
	\|v\|_{L^2(\RR^{\NN},X;\bgamma)}
	:= \
	\left(\int_{\RR^{\NN}} \|v(\cdot,\by)\|_X^2 \, \rd \bgamma(\by) \right)^{1/2}.
\end{equation}
Elements $v\in L^2(\RR^{\NN},X;\bgamma)$ are referred to as 
\emph{
Gaussian random fields (GRFs) taking values in $X$.
}
\begin{example} \label{example}
A typical instance of \eqref{a=exp} that we consider also 
in our numerical experiments, is that in \eqref{a=exp} 
$b(\by) = \log(a(\by))$ is a stationary Gaussian random field on $\TT$.
Then, for fixed $\by$, $\theta \mapsto b(\by)(\theta)$ 
is a Fourier series, i.e.~$a(\by)$ is of the form
\begin{equation}\label{a=trigonometric}
a(\by)(\theta) = \exp\big(b(\by)(\theta)\big),
\end{equation}
where $\by = (y_k)_{k\in \ZZ}$ is a 
sequence of i.i.d.~standard Gaussian normal variables
on a probability space $(\Omega,\Sigma,\mathbb{P})$
and $b(\by)(\theta)$ is a GRF taking values in $C(\TT)$.
We assume the GRF $b(\by)(\theta)$ to be given in the form of a \KL{ }expansion
$$
b(\by)(\theta) = \sum_{k\in \ZZ} y_k(\omega) \sigma_k b_k(\theta) , \; \theta \in \TT\;,
$$
where, due to stationarity,
$$
b_0 = \frac{1}{2\pi} \quad \text{and} \quad
b_k(\theta) \simeq \cos(k\theta), \; b_{-k} \simeq \sin(k\theta)
\ \text{for}\ k\in \NN,
$$
are scaled to be orthonormal with respect to the $L^2(\TT)$ innerproduct 
$(\cdot,\cdot)_{\TT}$ given by 
$(\varphi,\psi)_{\TT} = \int_0^{2\pi} \varphi(\theta) \psi(\theta) \d\theta$, 
i.e.~$(b_k,b_j)_{\TT} = \delta_{kj}$ for $j,k\in \NN$.
Comparing with \eqref{a=exp}, we set 
$\psi_k(\theta) = \sigma_k b_k(\theta)$, 
so that 
$$
b(\by)(\theta) 
= 
y_0 \psi_0 + \sum_{0\ne k\in \ZZ} y_k  \psi_k(\theta) 
\;.
$$
The sequence $\brac{\sigma_k^2}_{k \in \ZZ} \subset [0,\infty)$ 
corresponds to the eigenvalues of the covariance operator of the GRF $b(\by)(\theta)$.
It is a compact and self-adjoint operator on $L^2(\TT)$.
We assume the eigenvalue sequence $\{ \sigma_{\pm k}^2 \}_{k\in \ZZ}$ 
to be enumerated in decreasing order according to 
$\sigma_0 \geq \sigma_{\pm 1} \geq\cdots\geq \sigma_{\pm k} \geq\cdots\ge 0$ 
and accumulating only at zero.
The connection to random Fourier series on $\TT$ is via
\begin{equation}\label{eq:bySer}
b(\by)(\theta) = y_0 {\lambda_0} + \sum_{k=1}^\infty y_k {\lambda_k} \cos (k\theta) + y_{-k} {\lambda_{-k}}\sin (k\theta),
\end{equation}
so that $\lambda_k = \mathcal{O}(\sigma_{k})$.
\begin{remark}\label{rmk:ParDer}
The derivative with respect to $\theta$ of the expansion \eqref{a=trigonometric} is given by
$a'(\by)(\theta) = (a(\by)b'(\by))(\theta)$,
so that in \eqref{h(theta)} we obtain for \eqref{a=trigonometric} that $h(\theta) = b'(\theta)$.
\end{remark}
\begin{figure}[hbt]
\begin{center}
\includegraphics[width=0.25\textwidth]{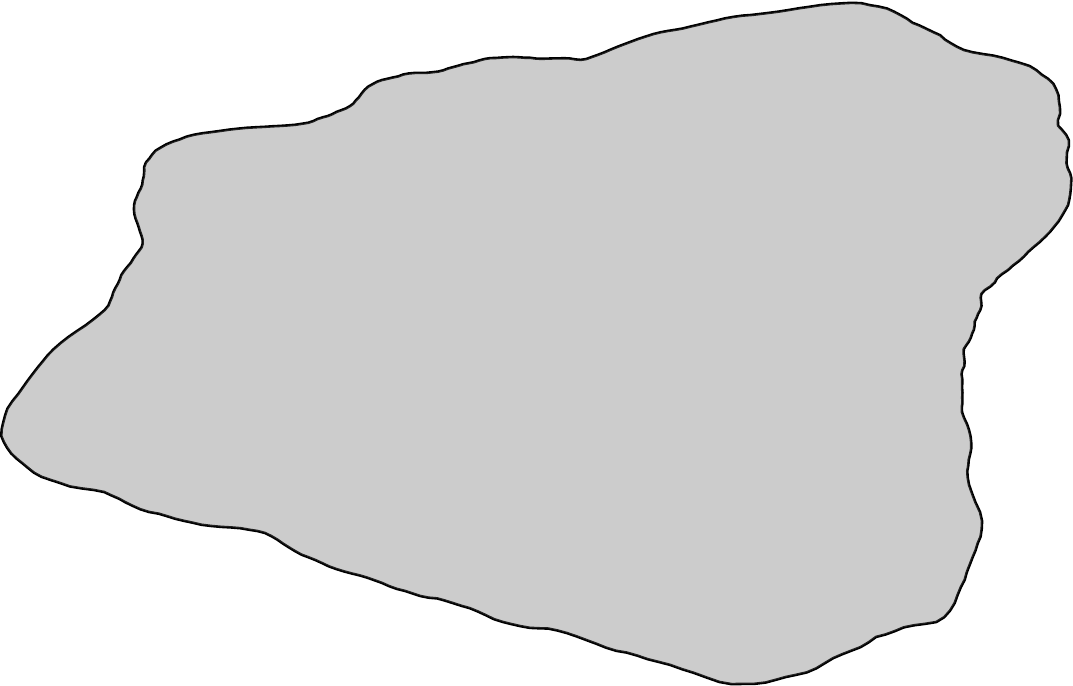}\qquad
\includegraphics[width=0.25\textwidth]{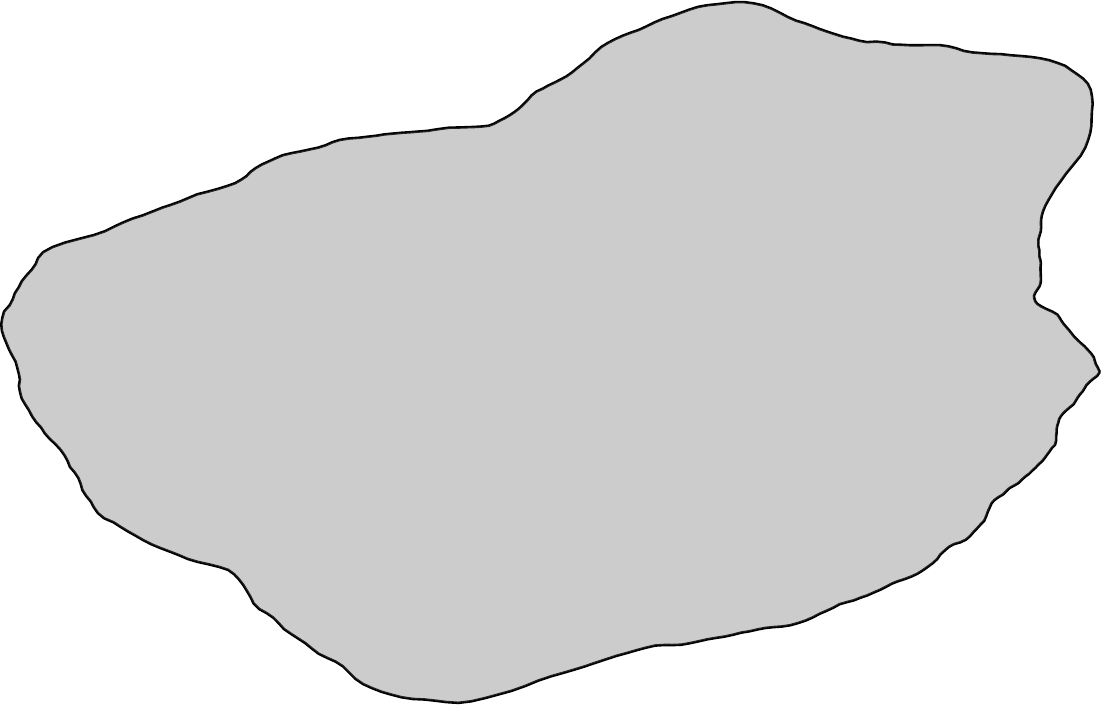}\qquad
\includegraphics[width=0.25\textwidth]{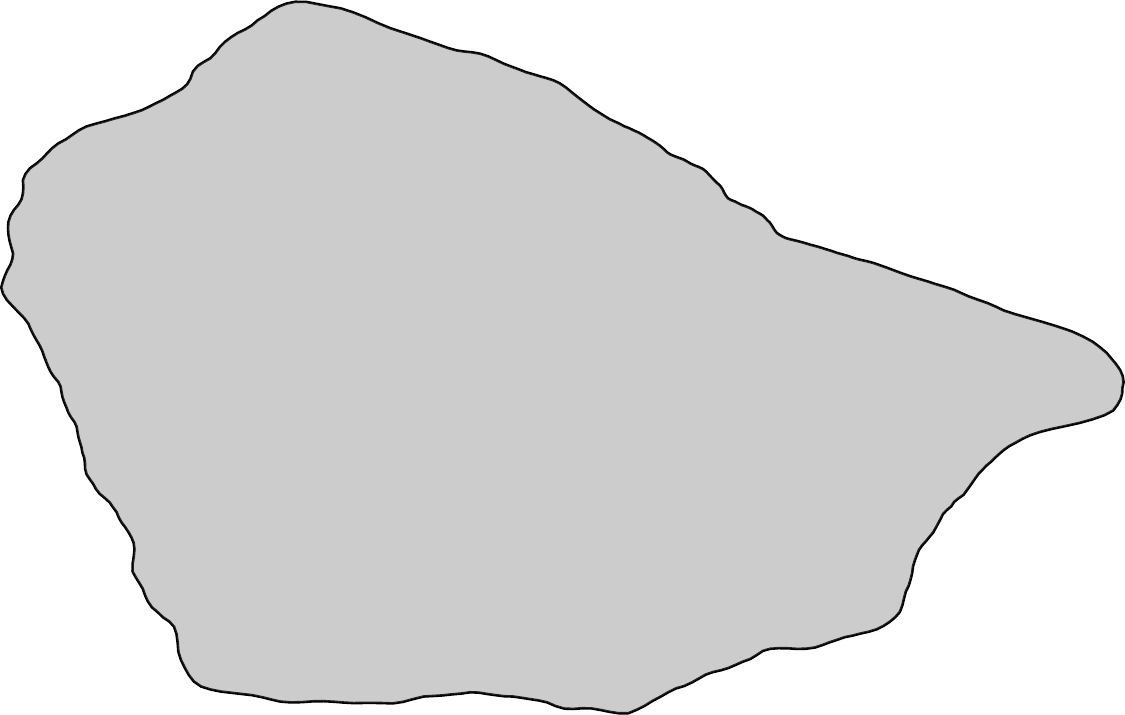}\\[2ex]
\begin{caption}{\label{fig:domain2}
Three realizations of the random domain with log-Gaussian random boundary 
in case of $\kappa = 0$ and for the sequence 
$\lambda_k = (|k|+1)^{-2}$ for all $k\in\mathbb{Z}$.}
\end{caption}
\end{center}
\end{figure}

\begin{figure}[hbt]
\begin{center}
\includegraphics[width=0.25\textwidth]{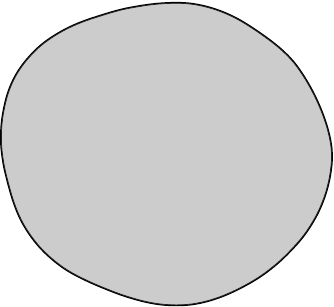}\qquad
\includegraphics[width=0.25\textwidth]{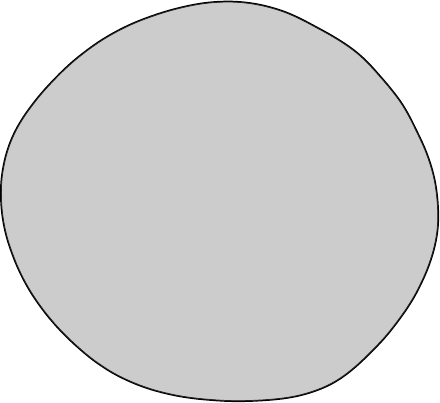}\qquad
\includegraphics[width=0.25\textwidth]{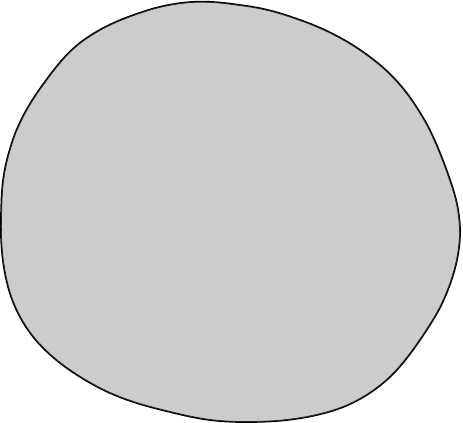}\\[2ex]
\begin{caption}{\label{fig:domain3}
Three realizations of the random domain with log-Gaussian random boundary 
in case of  $\kappa = 0$ and for the sequence 
$\lambda_k = (|k|+1)^{-3}$ for all $k\in\mathbb{Z}$.}
\end{caption}
\end{center}
\end{figure}

A visualization of three random
samples in case  $\kappa = 0$ and the sequence $\lambda_k = (|k|+1)^{-2}$ 
is shown in Figure~\ref{fig:domain2}, 
while a visualization of three random samples in case of $\lambda_k = (|k|+1)^{-3}$ 
is shown in Figure~\ref{fig:domain3}. 
\end{example}

Given $a=a(\by)$, 
we denote by $\hat{u}(\by):=\hat{u}\big(a(\by)\big)$ the 
weak pullback solution to variational formulation \eqref{eq:varformD}.
It satisfies the equation 
\begin{equation}\label{eq:varformD(by)}
\int_{D_{\refd,\kappa}} M(\by) \nabla \hat{u}(\by) \cdot\nabla v  \ = \ \langle f_{\refd}(\by), v\rangle, 
\quad v \in V,
\end{equation} 
where
\begin{equation}\label{M,f}
M(\by):= M\big(a(\by)\big) \ \ \text{and} \ \  f_{\refd}(\by):= f_{\refd}\big(a(\by)\big)
\end{equation}
are the diffusion matrix 
and  the right-hand 
side. Note that equation \eqref{eq:varformD(by)} is the 
variational formulation of the equation
\[
	-\div\big(M(\by)\nabla \hat{u}(\by)\big) 
	= f_{\refd}(\by)\ \text{in $\Dref$},\quad
	\hat{u}(\by) = 0 \ \text{on $\partial D_{\refd,\kappa}$}.
\]

A complex extension of $\hat{u}(\by)$ 
is $\bz\mapsto \hat{u}(\bz)\in V_\CC$ 
which satisfies the equation 
\begin{equation}\label{eq:varformD(bz)}
\int_{D_{\refd,\kappa}} M(\bz) \nabla \hat{u}(\bz) \cdot \nabla \bar{v}\ 
       = \ \langle f_{\refd}(\by),\bar{v}\rangle, 
\quad v \in   V_\CC ,
\end{equation} 
which is the variational formulation of the equation
\begin{equation}\label{eq:problemD(bz)}
	-\div\big(M(\bz)\nabla \hat{u}(\bz)\big) 
		= f_{\refd}(\bz)\ \text{in $\Dref$},\quad
			\hat{u}(\bz) = 0 \ \text{on $\partial D_{\refd,\kappa}$},
\end{equation}
where 
\begin{equation*}
M(\bz):= M\big(a(\bz)\big), \quad f_{\refd}(\bz):= f_{\refd}\big(a(\bz)\big),
\end{equation*}
 and
\[
	a(\bz)(\theta) = \exp\Bigg(\sum_{k \in \NN}
		z_k  \psi_k(\theta)  \Bigg), \quad\theta \in \TT, 
	\quad \bz = \brac{z_k}_{k \in \NN} \in \CC^\NN.
\]

We set
\begin{equation}\label{eq:defb}
	b(\bz)(\theta)  := \log (a(\bz)) = \sum_{k \in \NN} z_k  \psi_k(\theta),  
\end{equation}
where $\log(\cdot)$ denotes the principal branch of the logarithm of a complex argument.
Then we have
\[
h\big(a({\bz})\big)(\theta)= b'({\bz})=\sum_{k\in \NN} z_k \psi_k'(\theta), \ \ \theta \in \TT,
\]
and by \eqref{hat u <}
\begin{equation}\label{norm{hat{u}(by)}{V}}
\begin{aligned}
  \norm{\hat{u}({\bz})}{{V_\CC}} &\le \frac{2+\|\mathfrak{R}(b'({\bz}))\|_{L^\infty(\TT)}^2}
  {1-\|\mathfrak{I}(b'({\bz}))\|_{L^\infty(\TT)}^2}{\|a(\bz)\|_{L^\infty(\TT;\CC)}}
  \norm{f}{L^2(\RR^2)}
\\
&\le \frac{2+\|\mathfrak{R}(b'({\bz}))\|_{L^\infty(\TT)}^2}
{1-\|\mathfrak{I}(b'({\bz}))\|_{L^\infty(\TT)}^2}
\exp\big(\|\mathfrak{R}(b(\bz))\|_{L^\infty(\TT)}\big)\norm{f}{L^2(\RR^2)},	
\end{aligned}
\end{equation} 
provided $\|\mathfrak{I}(b'({\bz}))\|_{L^\infty(\TT)} < 1$.
\subsection{Holomorphy of the parametric pullback solution}
\label{sct:holomorphy}
In \eqref{a=exp}, denote 
\begin{equation}\label{eq:U0}
U_0= \big\{\by \in \RR^\NN:  a(\by) \in W^1_\infty(\TT) \big\} 
   = \big\{\by \in \RR^\NN:  b(\by) \in W^1_\infty(\TT) \big\}.
\end{equation}
In view of \cite[Theorem 2.2]{BCDM17}, we have the following result.

\begin{lemma}\label{lem:full-measure}
Assume that there exists a sequence $(\tau_k)_{k\in \NN}$ 
such that 
$\exp(-\tau_k^2)_{k\in \NN}\in \ell_1(\NN)$ and the series 
\[
\theta \mapsto \sum_{k\in \NN} \tau_k |\psi_k (\theta) |\ \ \text{and} \ \ \	
\theta \mapsto \sum_{k\in \NN} \tau_k |\psi_k'(\theta)|
\]
converge in $L^\infty(\TT)$. 
Then $\bgamma(U_0)=1$.
\end{lemma}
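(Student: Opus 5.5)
The idea is to find a set of full $\bgamma$-measure on which both $b(\by)$ and $b'(\by)$ are given by series that converge absolutely in $L^\infty(\TT)$. On that set $b(\by)\in W^1_\infty(\TT)$, hence $a(\by)=\exp(b(\by))\in W^1_\infty(\TT)$, i.e.\ $\by\in U_0$. This is the argument behind \cite[Theorem 2.2]{BCDM17}, adapted to the two series in the hypothesis.

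\textbf{Step 1 (Borel--Cantelli).} Let $A_k:=\{\by\in\RR^\NN: |y_k|>\tau_k\}$. Since $y_k$ is standard Gaussian, the tail bound gives $\bgamma(A_k)=\gamma(\{|y|>\tau_k\})\le C\exp(-\tau_k^2/2)$. This is the point to check carefully, because the hypothesis only gives summability of $\exp(-\tau_k^2)$, not of $\exp(-\tau_k^2/2)$. I would handle it by rescaling: replace $\tau_k$ by $\tilde\tau_k:=\sqrt2\,\tau_k$. Then $\sum_k\exp(-\tilde\tau_k^2/2)=\sum_k\exp(-\tau_k^2)<\infty$, and the series $\sum_k\tilde\tau_k|\psi_k|$ and $\sum_k\tilde\tau_k|\psi_k'|$ still converge in $L^\infty(\TT)$, being constant multiples of the original ones. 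With this choice $\sum_k\bgamma(A_k)<\infty$. By Borel--Cantelli, the set
\[
U_1:=\{\by: |y_k|\le\tilde\tau_k \text{ for all but finitely many } k\}
\]
has $\bgamma(U_1)=1$. Measurability is clear, since $U_1$ is a countable union of countable intersections of cylinder sets.

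\textbf{Step 2 (absolute convergence).} Fix $\by\in U_1$ and choose $N$ such that $|y_k|\le\tilde\tau_k$ for all $k>N$. For almost every $\theta$,
\[
\sum_{k>N}|y_k\psi_k(\theta)|\le\Big\|\sum_k\tilde\tau_k|\psi_k|\Big\|_{L^\infty(\TT)},
\]
and the same bound holds with $\psi_k'$ in place of $\psi_k$. The finitely many terms with $k\le N$ lie in $W^1_\infty(\TT)$ because $\psi_k\in W^1_\infty(\TT)$. Because the dominating series converges in $L^\infty$, its tails go to zero uniformly, so $\sum_k y_k\psi_k$ and $\sum_k y_k\psi_k'$ are Cauchy in $L^\infty(\TT)$.

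\textbf{Step 3 (identifying the derivative).} Let $b_n:=\sum_{k\le n}y_k\psi_k$. Then $b_n\to b$ and $b_n'\to g$ uniformly. The weak derivative is closed under this kind of convergence (test against smooth periodic functions), so $b'=g$ and $b(\by)\in W^1_\infty(\TT)$. Finally, $a=\exp(b)$ satisfies
\[
\|a\|_{L^\infty}\le e^{\|b\|_{L^\infty}},\qquad a'=ab'\in L^\infty,
\]
so $a(\by)\in W^1_\infty(\TT)$. Hence $U_1\subset U_0$ and $\bgamma(U_0)=1$.

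The main obstacle is the constant mismatch in the Gaussian tail flagged in Step 1; the rescaling $\tilde\tau_k=\sqrt2\,\tau_k$ resolves it. Everything else is routine.
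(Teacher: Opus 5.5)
Your proof is correct and is essentially the argument behind \cite[Theorem 2.2]{BCDM17}, which is the paper's only justification: rescaling to $\sqrt{2}\,\tau_k$ makes the Gaussian tail bounds summable, so Borel--Cantelli gives $|y_k|\le\sqrt{2}\,\tau_k$ for all but finitely many $k$ almost surely, and both series then converge absolutely in $L^\infty(\TT)$. Your Step~3, which identifies the uniform limit of $\sum_k y_k\psi_k'$ as the weak derivative of $b(\by)$, supplies the extension to the derivative series that the paper leaves implicit in its ``in view of''.
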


In the following we always assume that the sequence $(\psi_k)_{k\in \NN}$ 
satisfies the conditions in Lemma \ref{lem:full-measure}. 
Let $\brho=(\rho_j)_{j\in \NN} $ be a 
sequence of non-negative numbers and assume that 
$\mfu \subseteq \supp(\brho)$ is finite. 
Define
\[
	\mathcal{S}_\mfu (\brho) := {\bigtimes_{j\in \mfu}} \mathcal{S}_j(\brho),
\]
where the strip $ \mathcal{S}_j (\brho) $ is given by
\[
	\mathcal{S}_j (\brho):= \{ z_j\in
	\CC\,: |\mathfrak{I}(z_j)| < \rho_j\}.
\]
For $\by\in \RR^\NN$, put
\[
\mathcal{S}_\mfu (\by,\brho) := \big\{(z_j)_{j\in \NN}: z_j \in
\mathcal{S}_j(\brho)\ \text{if}\ j\in \mfu\ \text{and}\ z_j=y_j \
\text{if}\ j\not \in \mfu \big\}.
\]

\begin{lemma}\label{lemma:hol}
Let the sequence $\brho=(\rho_j)_{j\in \NN}$ satisfy
\[
	\Bigg\|\sum_{k \in \NN}\rho_k  |\psi_k'| \Bigg\|_{L^\infty(\TT)} <  1.
\]
Let $\by_0=(y_{0,j})_{j \in \NN} \in \RR^{\NN}$ be such that $b(\by_0)$
belongs to $W^1_\infty(\TT)$, and let $\mfu\subseteq \supp(\brho)$ be a
finite set. Assume \eqref{eq:fAn}.

Then the solution $\hat{u}(\by) $ of the parametric variational formulation 
\eqref{eq:varformD(by)} is holomorphic on $ \mathcal{S}_\mfu (\brho) $ 
as a function of the parameters $\bz_\mfu=(z_j)_{j \in \NN} \in 
\mathcal{S}_\mfu (\by_0,\brho)$ taking values in $V$ with 
$z_j = y_{0,j}$ for $j\not \in \mfu$ held fixed.
\end{lemma}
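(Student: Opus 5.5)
The plan is to reduce to Lemma~\ref{lemma:hol-hat{u}(a)} by checking two things: the finitely many complex parameters $\bz_\mfu$ map holomorphically into $W^1_\infty(\TT;\CC)$, and the image lands in the admissible set $\Aa$.

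\textbf{Step 1: the map $\bz_\mfu\mapsto b(\bz)$.} For $\bz\in\mathcal{S}_\mfu(\by_0,\brho)$ write
\[
b(\bz)=b(\by_0)+\sum_{j\in\mfu}(z_j-y_{0,j})\psi_j .
\]
This is a finite sum. By hypothesis $b(\by_0)\in W^1_\infty(\TT)$, and each $\psi_j\in W^1_\infty(\TT)$. Hence $\bz_\mfu\mapsto b(\bz)$ is affine, and so holomorphic, from $\CC^{|\mfu|}$ into $W^1_\infty(\TT;\CC)$. Since $W^1_\infty(\TT;\CC)$ is a Banach algebra, the exponential series converges in it. Therefore $a(\bz)=\exp(b(\bz))$ depends holomorphically on $\bz_\mfu$, with values in $W^1_\infty(\TT;\CC)$.

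\textbf{Step 2: $a(\bz)\in\Aa$.} First, $|a(\bz)(\theta)|=\exp(\mathfrak{R}\, b(\bz)(\theta))\ge \exp(-\|\mathfrak{R}\, b(\bz)\|_{L^\infty})>0$. Here $\mathfrak{R}\, b(\bz)=b(\by_0)+\sum_{j\in\mfu}(\mathfrak{R} z_j-y_{0,j})\psi_j$ is bounded, so $a(\bz)$ is bounded away from zero. Next, by Remark~\ref{rmk:ParDer} we have $h=b'(\bz)$. Its real part is bounded because it is a finite combination of $W^1_\infty$ functions. For the imaginary part, since $\by_0$ is real,
\[
|\mathfrak{I}(h(\theta))|=\Big|\sum_{j\in\mfu}\mathfrak{I}(z_j)\psi_j'(\theta)\Big|\le\sum_{j\in\mfu}\rho_j|\psi_j'(\theta)|\le \Big\|\sum_{k\in\NN}\rho_k|\psi_k'|\Big\|_{L^\infty(\TT)}<1 .
\]
Thus \eqref{BL-Im} holds uniformly on the strip, and $a(\bz)\in\Aa$. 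Combined with Lemma~1 this yields $\lambda_{\min}(a(\bz))>0$, and the bound \eqref{norm{hat{u}(by)}{V}} follows.

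\textbf{Step 3: composition.} By Lemma~\ref{lemma:hol-hat{u}(a)}, $a\mapsto\hat u(a)$ is holomorphic on a neighborhood of $\Aa$ in $W^1_\infty(\TT;\CC)$. Composing this with the holomorphic map $\bz_\mfu\mapsto a(\bz)$ from Step 1 gives that $\bz_\mfu\mapsto\hat u(\bz)$ is holomorphic on $\mathcal{S}_\mfu(\brho)$ with values in $V_\CC$. For real $\bz$ it coincides with the solution of \eqref{eq:varformD(by)}.

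\textbf{Main obstacle.} The delicate point is ensuring that the holomorphy in Lemma~\ref{lemma:hol-hat{u}(a)}, taken from \cite{CD}, is genuinely available at every point of the image. If one prefers not to rely on it, the fallback is a direct argument on a fixed point $\bz$.
\begin{itemize}
\item[(i)] Set $\delta:=1-\|\mathfrak{I}(b'(\bz))\|_{L^\infty}>0$. Small perturbations of $\bz$ keep $\|\mathfrak{I}(b')\|_{L^\infty}\le 1-\delta/2$, so the uniform coercivity \eqref{BL-CorCond} persists in a small polydisc around $\bz$.
\item[(ii)] The entries of $M(\bz)$ are polynomials in $b'(\bz)$, hence holomorphic into $L^\infty$.
\item[(iii)] $f_{\refd}(\bz)=a(\bz)^2\,(f\circ F(a(\bz)))$ is holomorphic into $L^2$, using the analyticity of $f$ from \eqref{eq:fAn}. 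This requires $f$ to extend holomorphically near the real points reached by $F(a(\bz))$, and this is the technical step to verify.
\item[(iv)] Holomorphy of $\hat u$ then follows by writing $\hat u(\bz)=\mathcal{B}(\bz)^{-1}L(\bz)$. Inversion is holomorphic on the invertible operators, so $\hat u$ is a composition of holomorphic maps. Alternatively, one can differentiate the variational equation and show that the difference quotients converge in $V_\CC$.
\end{itemize}
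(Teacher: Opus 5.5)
Your argument is correct and is essentially the paper's own proof. Like the paper, you bound $|\mathfrak{I}(b'(\bz_\mfu))|$ by $\|\sum_k\rho_k|\psi_k'|\|_{L^\infty(\TT)}<1$, check that $\mathfrak{R}(b'(\bz_\mfu))$ and $\mathfrak{R}(b(\bz_\mfu))$ are bounded, and then invoke Lemma~\ref{lemma:hol-hat{u}(a)} together with the analyticity of the exponential. The only difference is cosmetic: the paper works on the truncated strips $\mathcal{S}_{\mfu,N}(\brho)$, where $|y_j-y_{0,j}|<N$, to get uniform real-part bounds and then lets $N\to\infty$, whereas you use pointwise bounds, which suffices because holomorphy is local; accordingly, your phrase ``uniformly on the strip'' should apply only to the imaginary-part condition.
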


\begin{proof}
Let $N\in \NN$.  We denote
\[
	\mathcal{S}_{\mfu,N} (\brho) 
	:= 
	\big\{ (y_j+\im \xi_j)_{j\in \mfu}\in \mathcal{S}_\mfu (\brho): |y_j-y_{0,j}|<N\big\}\,.
\]
For $\bz_{\mfu} = (y_j+\im \xi_j)_{j\in \ZZ}\in \mathcal{S}_{\mfu}
(\by_0,\brho)$ with $(y_j+\im \xi_j)_{j\in \mfu}\in \mathcal{S}_{\mfu,N} (\brho)$,
we have
\[
	|\mathfrak{I}(b'(\bz_{\mfu})(\theta))| 
	\le 	
	\Bigg\|\sum_{k \in \NN}	\rho_k  |\psi_k'(\theta)| \Bigg\|_{L^\infty(\TT)} 
	<  1,
\]
and
\begin{align*}
	|\mathfrak{R}(b'(\bz_{\mfu})(\theta))|&=\bigg| \sum_{k\in \NN} y_k \psi_k'(\theta)\bigg|
	\\
	&\leq \bigg|\sum_{k\in \NN}y_{0,k}\psi_k'(\theta)\bigg|
	+ \sum_{k\in \mfu}|y_{0,k}-y_k|\cdot |\psi_k'(\theta)|
	\\
	&\leq \|b'(\by_0)\|_{L^\infty(\TT)}+N\sum_{k\in\mfu}|\psi_k'(\theta)| <\infty.
\end{align*}
Similarly we have
\[
\|\mathfrak{R}(b(\bz_\mfu))\|_{L^\infty(\TT)}<\infty.
\]
Hence, by  Lemma~\ref{lemma:hol-hat{u}(a)}, \eqref{eq:fAn} and the analyticity 
of exponential functions, we conclude that the map $\bz_\mfu\mapsto
\hat{u}(\bz_{\mfu})$ is holomorphic on the set $\mathcal{S}_{\mfu,N} (\brho)$. 
Since $N$ is arbitrary, 
we finally deduce that the map $\bz_\mfu\mapsto \hat{u}(\bz_{\mfu})$ 
is holomorphic on $\mathcal{S}_{\mfu} (\brho)$.
\end{proof}
\subsection{Derivative estimates}
\label{sct:decay}
We shall next study the behaviour of the derivatives 
$\partial^{\bnu}\hat{u}(\by)$ of the pullback solution. 
Recall that $\FF$ is the set of all sequences of non-negative 
integers $\bnu=(\nu_j)_{j \in \NN}$ such that their support 
$\supp (\bnu):= \{j \in \NN: \nu_j >0\}$ is a finite set.
The analytic continuation of the parametric solutions
$\{\hat{u}(\by): \by\in U_0 \}$ to $\mathcal{S}_{\mfu} (\brho)$ leads 
to a result on parametric $V$-regularity.
\begin{theorem}\label{thm:derivative-bound}
Assume that there exist  
a non-negative sequence $\brho = (\rho_j)_{j \in \NN}$ 
such that there are constants $B_\brho, C_\brho$ such 
that in \eqref{a=exp} holds
\begin{equation}\label{kappa1}
	\Bigg\|\sum_{k \in \NN}\rho_k  |\psi_k'| \Bigg\|_{L^\infty(\TT)}
	\leq B_\brho < 1
\end{equation}
and
\begin{equation}\label{kappa_0}
	\Bigg\|\sum_{k \in \NN}\rho_k  |\psi_k| \Bigg\|_{L^\infty(\TT)}
	\leq C_\brho.
\end{equation}
Let $\by \in \RR^\NN$ with $b(\by)\in W^1_\infty(\TT)$ 
and 
$\bnu\in \FF$ such that $\supp(\bnu)\subseteq \supp(\brho)$.  

Then
\[
	\|\partial^{\bnu}\hat{u}(\by)\|_V 
	\leq \exp(C_\brho)\frac{\bnu!}{\brho^\bnu}
	\frac{ \big(\|b'(\by )\|_{L^\infty(\TT)} +2\big)^2}{1 - B_\brho} 
	\exp\big(\| b(\by)\|_{L^\infty(\TT)}\big)\|f\|_{L^2(\mathbb{R}^2)}.
\]
\end{theorem}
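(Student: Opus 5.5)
The plan is to combine the holomorphy of Lemma~\ref{lemma:hol} with the Cauchy integral formula on polydiscs, in the spirit of \cite[Section 3.6]{DNSZ}. Fix $\by$ with $b(\by)\in W^1_\infty(\TT)$ and $\bnu\in\FF$ with $\mfu:=\supp(\bnu)\subseteq\supp(\brho)$. By Lemma~\ref{lemma:hol}, and since \eqref{kappa1} is exactly its hypothesis, the map $\bz_\mfu\mapsto\hat u(\bz_\mfu)$ with $z_j=y_j$ for $j\notin\mfu$ is holomorphic on $\mathcal{S}_\mfu(\by,\brho)$. For any $0<\rho_j'<\rho_j$, the closed polydisc
\[
\mathcal{D}:=\{\bz_\mfu:\ |z_j-y_j|\le \rho_j',\ j\in\mfu\}
\]
is therefore contained in the strip domain. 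Cauchy's formula applied iteratively in each $z_j$, $j\in\mfu$, gives
\[
\|\partial^\bnu\hat u(\by)\|_V\le \frac{\bnu!}{\brho'^{\bnu}}\sup_{\bz\in\partial_0\mathcal{D}}\|\hat u(\bz)\|_{V_\CC},
\]
where $\partial_0\mathcal{D}$ is the distinguished boundary. Letting $\rho'_j\uparrow\rho_j$ at the end yields the factor $\bnu!/\brho^\bnu$, provided the supremum bound is uniform in $\brho'\le\brho$.

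It then remains to bound $\|\hat u(\bz)\|_{V_\CC}$ uniformly for $\bz$ with $|z_j-y_j|\le\rho_j$ on $\mfu$, using \eqref{norm{hat{u}(by)}{V}}. Write $z_j=y_j+\zeta_j$ with $|\zeta_j|\le\rho_j$. Then
\[
|\mathfrak{I}(b'(\bz))|\le\sum_{k}\rho_k|\psi_k'|\le B_\brho<1
\quad\text{and}\quad
|\mathfrak{R}(b'(\bz))|\le|b'(\by)|+\sum_k\rho_k|\psi_k'|\le\|b'(\by)\|_{L^\infty}+1 .
\]
Similarly,
\[
|\mathfrak{R}(b(\bz))|\le\|b(\by)\|_{L^\infty}+C_\brho .
\]
Substituting these into \eqref{norm{hat{u}(by)}{V}} gives
\[
\|\hat u(\bz)\|_{V_\CC}\le \frac{2+(\|b'(\by)\|+1)^2}{1-B_\brho^2}\,e^{C_\brho}e^{\|b(\by)\|_{L^\infty}}\|f\|_{L^2}.
\]
Finally, $2+(t+1)^2\le (t+2)^2$ for $t\ge0$, and $1-B_\brho^2\ge 1-B_\brho$, which produces exactly the asserted constant.

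The main obstacle is the justification in the first step. The inequality $|\mathfrak{I}(b'(\bz))|<1$ needed in \eqref{norm{hat{u}(by)}{V}} depends on $|\mathfrak{I}z_j|\le\rho_j$, and the polydisc satisfies this. The only subtlety is that Lemma~\ref{lemma:hol} yields holomorphy on the open strips. Using radii $\rho_j'<\rho_j$ and passing to the limit avoids the boundary, and the uniform bound above holds for all $\brho'\le\brho$. One must also check that the $V$-valued Cauchy formula applies. It does, because the map is holomorphic in each variable separately and continuous, hence jointly holomorphic by Hartogs, in finitely many variables.
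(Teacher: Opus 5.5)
Your proposal is correct and follows essentially the same route as the paper. The paper also applies Cauchy's integral formula on the polydisc of radii $\rho_j$ about $\by$, using the holomorphy from Lemma~\ref{lemma:hol}, and then inserts the bounds $|\mathfrak{I}(b'(\bz))|\le B_\brho$, $|\mathfrak{R}(b'(\bz))|\le\|b'(\by)\|_{L^\infty(\TT)}+1$ and $|\mathfrak{R}(b(\bz))|\le\|b(\by)\|_{L^\infty(\TT)}+C_\brho$ into the a priori estimate for $\|\hat u(\bz)\|_{V_\CC}$. Your use of radii $\rho_j'<\rho_j$ followed by a limit is a small, welcome tightening: the paper integrates directly over the circles $|z_j-y_j|=\rho_j$, which touch the boundary of the open strips on which holomorphy was established.
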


\begin{proof}
Let $\bnu\in \FF$ be such that
$\supp(\bnu)\subseteq \supp(\brho)$.  
Denote $\mfu=\supp(\bnu)$.
Fixing the variable $y_j$ when $j\not \in \mfu$, 
the map
$\mathcal{S}_\mfu (\by,\brho) \ni \bz_{\mfu}\mapsto \hat{u}(\bz_\mfu)$ 
is holomorphic on the domain 
$\mathcal{S}_\mfu(\by,\brho)$ 
by Lemma~\ref{lemma:hol}.
Applying Cauchy's integral formula gives
\[
	\partial^{\bnu}\hat{u}(\by) =
		\frac{\bnu!}{(2\pi i)^{|\mfu|}}
		\int_{\mathcal{C}_{\by,\mfu}(\brho)} 
		\frac{\hat{u}(\bz_\mfu) }{\prod_{j\in \mfu}  (z_j-y_j)^{\nu_j+1}}\prod_{j\in \mfu}\rd z_j,
\]
where integration is over the cylinders
\[
	\mathcal{C}_{\by, \mfu}(\brho) :=
	\bigtimes_{j\in \mfu} \mathcal{C}_{\by,j}( \brho)\,,\qquad
	\mathcal{C}_{\by,j} ( \brho) := \big\{ z_j \in \CC:
	|z_j-y_j|=\rho_j\big\}\,.
\]
This leads to
\begin{equation} \label{u-y}
	\|\partial^{\bnu}\hat{u}(\by)\|_{V}\leq \frac{\bnu!}{\brho^\bnu}
		\sup_{\bz_\mfu\in \mathcal{C}_\mfu(\by,\brho)}
		\|\hat{u}(\bz_\mfu)\|_{V_\CC}
\end{equation} 
with
\[
	\mathcal{C}_\mfu(\by,\brho)
		=\big\{(z_j)_{j\in \NN} \in \mathcal{S}_\mfu (\by,\brho): \
		(z_j)_{j\in \mfu}\in \mathcal{C}_{\by,\mfu}(\brho) \big\}.
\]
Notice that for $\bz_\mfu=(z_j)_{j\in \NN} \in \mathcal{C}_\mfu(\by,\brho)$ 
we can write $z_j = y_j + \eta_j + \im\xi_j \in \Cc_{\by,j}(\brho)$ with
$|\eta_j | \le \rho_j$, $|\xi_j| \le \rho_j$ if $j\in \mfu$ and $\eta_j = \xi_j=0$ 
if $j\not \in \mfu$. 
We deduce from \eqref{norm{hat{u}(by)}{V}} that
\begin{align*} 
	\|\hat{u}(\bz_\mfu)\|_{V_\CC} 
	& \leq 
	\frac{2+\|\mathfrak{R}\brac{b'(\bz_\mfu)}\|_{L^\infty(\TT)}^2}
	{1-\|\mathfrak{I}\brac{b'(\bz_\mfu)}\|_{L^\infty(\TT)}^2} 
	\exp\big(\|\mathfrak{R}(b(\bz_\mfu))\|_{L^\infty(\TT)}\big)\|f\|_{L^2(\mathbb{R}^2)}
	\\
	& \leq 
	\frac{2+\|b'(\by + \beeta)\|_{L^\infty(\TT)}^2}{1 - B_\brho}
	\exp\big(\| b(\by+\beeta)\|_{L^\infty(\TT)}\big)\|f\|_{L^2(\mathbb{R}^2)}
	\\
	& \leq \exp(C_\brho)
	\frac{2+\big(\|b'(\by )\|_{L^\infty(\TT)} +1\big)^2}{1 - B_\brho}
	\exp\big(\| b(\by)\|_{L^\infty(\TT)}\big)\|f\|_{L^2(\mathbb{R}^2)}	
	\\
 	& \leq \exp(C_\brho)
	\frac{ \big(\|b'(\by )\|_{L^\infty(\TT)} +2\big)^2}{1 - B_\brho}
	\exp\big(\| b(\by)\|_{L^\infty(\TT)}\big)\|f\|_{L^2(\mathbb{R}^2)}.
\end{align*}
By inserting this estimate into \eqref{u-y}, 
we obtain the desired result.
\end{proof}

This theorem immediately implies the following
corollary which concerns the special series expansion 
\eqref{a=trigonometric} which is based on a Fourier series. 
\begin{corollary}\label{corollary:estV}
Consider $a(\by)(\theta)$ as in \eqref{a=trigonometric}, 
with Fourier series $b(\by) = \log(a(\by)(\theta)$.
Assume that there exist  
a non-negative sequence $\brho = (\rho_j)_{j \in \ZZ}$  
and a number $B_\brho > 0$ satisfying  
\begin{equation}\label{kappa2}
	\Bigg\|\sum_{k=1}^\infty 
	\rho_k k |\lambda_k\sin (k\theta)| 
	+ \rho_{-k} k |\lambda_{-k}\cos (k\theta)| \Bigg\|_{L^\infty(\TT)} 
	\leq B_\brho <  1
\end{equation} 
and
\[
C_{\brho} 
:= 
\Bigg\|\rho_0 \lambda_0 + \sum_{k=1}^\infty 
\rho_k |\lambda_k\cos (k\theta)| 
+ \rho_{-k} |\lambda_{-k}\sin (k\theta)| \Bigg\|_{L^\infty(\TT)}
<\infty .
\]

Let $\bnu\in \NN_0^\ZZ$ be such that $|\bnu|_1<\infty$ and $\supp(\bnu)\subseteq \supp(\brho)$. 
Then for every $\by \in \RR^\ZZ$ with $b(\by)\in W^1_\infty(\TT)$ holds
\begin{equation}\label{eq:uybd}
	\|\partial^{\bnu}\hat{u}(\by)\|_V 
	 \leq \exp(C_\brho)\frac{\bnu!}{\brho^\bnu}
	\frac{ \big(\|b'(\by )\|_{L^\infty(\TT)} +2\big)^2}{1 - B_\brho} 
	\exp\big(\| b(\by)\|_{L^\infty(\TT)}\big)\|f\|_{L^2(\mathbb{R}^2)}
\end{equation}
for the parametric, weak solution $\hat{u}(\by) \in V$ 
to \eqref{eq:varformD(by)} with $a(\by)$ 
as in \eqref{a=trigonometric}.

\end{corollary}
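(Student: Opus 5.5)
The plan is to obtain the corollary as a direct specialization of Theorem~\ref{thm:derivative-bound}. The only work is to identify the expansion \eqref{eq:bySer} with the general log-Gaussian form \eqref{a=exp} and to check that \eqref{kappa2} and the finiteness of $C_\brho$ are exactly the hypotheses \eqref{kappa1} and \eqref{kappa_0}.

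\emph{Reindexing.} First I relabel the index set $\ZZ$ by a fixed bijection onto $\NN$, so that $\by\in\RR^\ZZ$ and $\bnu\in\NN_0^\ZZ$ with $|\bnu|_1<\infty$ correspond to elements of $\RR^\NN$ and of $\FF$. Under this bijection $\bnu!$, $\brho^\bnu$ and the support conditions are unchanged, and the tensor-product Gaussian measure is invariant under the permutation. I then set
\[
\psi_0=\lambda_0,\qquad \psi_k(\theta)=\lambda_k\cos(k\theta),\qquad \psi_{-k}(\theta)=\lambda_{-k}\sin(k\theta),\qquad k\in\NN,
\]
so that $b(\by)=\sum_{k\in\ZZ}y_k\psi_k$ has the form \eqref{a=exp}, \eqref{eq:defb}, and $\psi_k\in W^1_\infty(\TT)$.

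\emph{Checking the hypotheses.} Differentiating gives $|\psi_k'|=k|\lambda_k\sin(k\theta)|$, $|\psi_{-k}'|=k|\lambda_{-k}\cos(k\theta)|$ and $\psi_0'=0$. Hence $\sum_{k}\rho_k|\psi_k'|$ is exactly the function inside the norm in \eqref{kappa2}, and \eqref{kappa1} holds with the same $B_\brho<1$. Similarly, $\sum_k\rho_k|\psi_k|$ is the expression defining $C_\brho$, with $\rho_0|\lambda_0|$ in place of $\rho_0\lambda_0$; these agree because $\lambda_0\ge 0$. So \eqref{kappa_0} holds with this $C_\brho$. By Remark~\ref{rmk:ParDer}, $h=b'$, consistent with the computation behind \eqref{norm{hat{u}(by)}{V}}.

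\emph{Conclusion.} For $\by$ with $b(\by)\in W^1_\infty(\TT)$ and $\supp(\bnu)\subseteq\supp(\brho)$, Theorem~\ref{thm:derivative-bound} then gives \eqref{eq:uybd} verbatim. Its proof relies on the holomorphy from Lemma~\ref{lemma:hol}, which needs only \eqref{kappa1} and \eqref{eq:fAn}, so nothing further is required.

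The only step needing any care is the bookkeeping of the reindexing. In particular the zero mode $\psi_0$, which is constant, must be included: it contributes to $C_\brho$ but not to $B_\brho$. Since the bound in Theorem~\ref{thm:derivative-bound} does not depend on the ordering of the parameters, I expect no genuine obstacle.
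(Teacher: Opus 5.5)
Your proposal is correct and follows the paper's argument: the paper simply states that the corollary follows immediately from Theorem~\ref{thm:derivative-bound}. That is exactly your identification of $\psi_0=\lambda_0$ and $\psi_{\pm k}$ with the scaled cosine/sine modes, under which \eqref{kappa2} and $C_\brho$ become \eqref{kappa1} and \eqref{kappa_0}. Your observation that matching $C_\brho$ with \eqref{kappa_0} needs $\lambda_0\ge 0$ (so that $\rho_0\lambda_0=\rho_0|\lambda_0|$) is a detail the paper leaves implicit, and it is worth keeping.
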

\section{Interpolation and Quadrature}
\label{sct:numerixSpG}
We address the numerical solution of the parametric PDE \eqref{eq:varformD(by)}.
In Section~\ref{sct:interpolation}, we
address sparse-grid interpolation with respect to the parameter, 
and 
sparse Gauss-Hermite Smolyak quadrature with respect to the parameter.
In Section~\ref{sct:QMC}, we address 
quasi-Monte Carlo quadrature over the parameter domain.
\subsection{Sparse grid interpolation and quadrature }
\label{sct:interpolation}
We apply the analyticity results in Lemma~\ref{thm:derivative-bound} 
to obtain a weighted $\ell_2$-summability of the Hermite GPC expansion coefficients 
of $u(\by)$ which gives necessary conditions for constructing 
sparse-grid interpolations and establishing a convergence rate 
that is free from the CoD.

Every function $v \in {L^2(\RR^{\NN},X;\bgamma)}$ can be represented  
by the Hermite polynomial chaos (PC) 
expansion
\begin{equation} \label{series}
	v(\by)=\sum_{\bnu \in \FF} v_\bnu\,H_\bnu(\by), \quad v_\bnu \in X,
\end{equation}
where
\begin{equation*}
	H_\bnu(\by)=\bigotimes_{j \in \NN}H_{\nu_j}(y_j),\quad 
        v_\bnu:=\int_{\RR^\NN} v(\by)\,H_\bnu(\by)\, \rd\bgamma (\by), \quad \bnu \in \FF,
	\label{hermite}
\end{equation*}
with $(H_k)_{k\in \NN_0}$ being the  
Hermite orthonormal polynomials in $L^2(\RR,\gamma)$. 
There holds the Parseval's identity 
\begin{equation} \nonumber
	\|v\|_{L^2(\RR^{\NN},X;\bgamma)}^2
	\ = \ \sum_{\bnu \in \FF} \|v_\bnu\|_X^2.
\end{equation}

For $\theta, \lambda \ge 0$, 
define the set 
$\bp(\theta, \lambda):= \brac{p_\bnu(\theta, \lambda) }_{\bnu \in \FF}$ 
by 
\begin{equation*} 
	p_\bnu(\theta , \lambda) := \prod_{j \in \NN} (1 + \lambda \nu_j)^\theta, \quad \bnu \in \FF.
\end{equation*}
For integer $m \in \NN$, we use the following notation: 
\begin{equation} \nonumber
	\FF_m:= \{\bnu \in \FF: \nu_j \in \NN_{0,m},  \ j \in \NN \},
	\ \ \text{where} \ \
	\NN_{0,m} := \{n \in \NN_0: n = 0, m, m+1, \ldots\}.
\end{equation}
\begin{lemma}\label{lemma:weighted-summability}
Assume that there exist  a non-negative sequence $\brho = (\rho_j)_{j \in \NN}$  
and  
positive numbers $ B_\brho $ and $C_{\brho}$ 
satisfying  \eqref{kappa1} and \eqref{kappa_0}, 
and
$(\rho_j^{-1})_{j \in \NN} \in \ell_p(\NN)$ for some $p \in (0,1)$. 
Set $q:= 2p/(2-p) \in (0,2)$.
Let $m \in \NN$ and $\tau, \lambda \ge 0$ be any fixed numbers.
For  $r\in \NN$ and the sequence 
$\bvarrho=(\varrho_j)_{j \in \NN}$ given by
$$
\varrho_j:=\rho_j^{1-p/2}\frac{1}{\sqrt{r!} \norm{ (\rho_j^{-1})_{j \in \NN}}{\ell_p(\NN)}^{p/2}},
$$ 
we define  the Wiener-Hermite weights $\sigma_\bnu$ by
	\begin{equation} \label{sigma_nu}
		\sigma_\bnu^2 :=
		\sum_{|\bnu'|_\infty\leq r } \binom{\bnu}{\bnu'} \bvarrho^{2\bnu'} 
		= \prod_{j \in \NN}\bigg(\sum_{\ell=0}^{r}\binom{\nu_j}{\ell}\varrho_j^{2\ell}\bigg), \ \ \bnu \in \FF.
	\end{equation} 
	Then, for any fixed $r > 2(\tau + 1)m/q$,
        the Hermite coefficients ${\hat{u}}_\bnu$ of the parametric weak solution ${\hat{u}}(\by)$ of \eqref{eq:varformD(by)}
        admit the weighted $\ell_2$-summability
	\begin{equation*} 
		\sum_{\bnu\in \FF_m} \brac{\sigma_\bnu\|\hat{u}_\bnu\|_{V}}^2 <\infty
		\ \ \ \text{with} \ \ \ 
		 \big(	p_\bnu(\tau, \lambda)\sigma_\bnu^{-1}\big)_{\bnu\in\FF_m} \in \ell_{q/m}(\FF_m).
	\end{equation*}
\end{lemma}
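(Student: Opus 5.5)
This follows the standard route from \cite{DNSZ} (Bachmayr--Cohen--DeVore--Migliorati type arguments): turn the derivative bounds of Theorem~\ref{thm:derivative-bound} into a weighted $\ell_2$ bound on Hermite coefficients. The proof then splits into two independent parts: the summability $\sum_\bnu (\sigma_\bnu\|\hat u_\bnu\|_V)^2<\infty$, and the membership $(p_\bnu(\tau,\lambda)\sigma_\bnu^{-1})_{\bnu\in\FF_m}\in\ell_{q/m}(\FF_m)$.

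\textbf{Step 1 (summability).} We use the Rodrigues-formula identity for Hermite expansions,
\[
\sum_{\bnu\in\FF}\sigma_\bnu^2\|\hat u_\bnu\|_V^2=\sum_{|\bnu'|_\infty\le r}\frac{\bvarrho^{2\bnu'}}{\bnu'!}\int_{\RR^\NN}\|\partial^{\bnu'}\hat u(\by)\|_V^2\,\rd\bgamma(\by),
\]
which follows from $\partial^{\bnu'}H_\bnu=\sqrt{\bnu!/(\bnu-\bnu')!}\,H_{\bnu-\bnu'}$ together with Parseval. The derivative $\partial^{\bnu'}\hat u$ is estimated with Theorem~\ref{thm:derivative-bound}, but not with $\brho$ itself. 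For each $\bnu'$ we apply the theorem with a rescaled sequence $\tilde\brho$, with $\tilde\rho_j=\rho_j$ on $\supp(\bnu')$ and $0$ elsewhere. Then \eqref{kappa1} and \eqref{kappa_0} remain valid with the same constants, because the sums only decrease. This yields
\[
\|\partial^{\bnu'}\hat u(\by)\|_V\le \frac{\bnu'!}{\brho^{\bnu'}}\,G(\by),\qquad
G(\by):=\exp(C_\brho)\frac{(\|b'(\by)\|_{L^\infty}+2)^2}{1-B_\brho}\,e^{\|b(\by)\|_{L^\infty}}\|f\|_{L^2}.
\]
The series is therefore bounded by $\|G\|^2_{L^2(\bgamma)}\sum_{|\bnu'|_\infty\le r}\bnu'!\,\bvarrho^{2\bnu'}\brho^{-2\bnu'}$. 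Since $\bnu'!\le (r!)^{|\supp\bnu'|}$, the choice of $\varrho_j$ gives $\varrho_j^2\rho_j^{-2}\,r!=\rho_j^{-p}/\|\brho^{-1}\|_{\ell_p}^p=:\delta_j$ with $\sum_j\delta_j=1$. The sum is then at most $\prod_j(1+r\,\delta_j)\le e^{r}<\infty$. It remains to show $\int G^2\,\rd\bgamma<\infty$. This follows from Fernique-type integrability: the standing assumption of Lemma~\ref{lem:full-measure} makes $\|b\|_{W^1_\infty}\le\sum_k|y_k|(|\psi_k|+|\psi_k'|)$, and exponential moments of this quantity are finite, exactly as in \cite[Theorem 2.2]{BCDM17}. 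Polynomial factors are handled by Cauchy--Schwarz.

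\textbf{Step 2 ($\ell_{q/m}$ membership).} Because $\sigma_\bnu^2$ and $p_\bnu$ are both products over $j$, we get
\[
\sum_{\bnu\in\FF_m}\big(p_\bnu\sigma_\bnu^{-1}\big)^{q/m}=\prod_j\Big(1+\sum_{n\ge m}(1+\lambda n)^{\tau q/m}\Big(\sum_{\ell=0}^r\binom{n}{\ell}\varrho_j^{2\ell}\Big)^{-q/(2m)}\Big).
\]
For $n\ge m$ we bound the inner sum from below by $c\,n^{r}\varrho_j^{2r}$, possibly taking the minimum with the $\ell=m$ term when $n<r$. The $n$-series then converges because $r q/(2m)>\tau q/m+1$, i.e.\ $r>2(\tau+1)m/q$. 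Each factor is therefore at most $1+C\varrho_j^{-q\,\min(r,m)/m}\le 1+C\varrho_j^{-q}$ for large $\varrho_j$; this is where the restriction $\nu_j\in\{0\}\cup\{m,m+1,\dots\}$ is used. The product converges iff $(\varrho_j^{-1})\in\ell_q$. Since $\varrho_j^{-1}\simeq\rho_j^{-(1-p/2)}$ and $q(1-p/2)=p$, this is exactly $(\rho_j^{-1})\in\ell_p$. For finitely many small $\varrho_j$ the factors are simply finite.

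The main obstacle is the bookkeeping in Step 2: making the lower bound for $\sum_\ell\binom{n}{\ell}\varrho_j^{2\ell}$ uniform, so that every factor behaves like $1+C\varrho_j^{-q}$ with a constant independent of $j$. The integrability of $G$ in Step 1 is routine once Lemma~\ref{lem:full-measure} is granted.
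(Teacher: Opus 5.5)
Your two steps are the argument the paper delegates to \cite{DNSZ} and \cite[Lemma~5.3]{Zung}. For the weighted $\ell_2$ bound you combine the Rodrigues--Parseval identity with Theorem~\ref{thm:derivative-bound} and the exponential integrability of $\|b(\by)\|_{W^1_\infty(\TT)}$. For the $\ell_{q/m}$ claim you use the factorization over $j$. Step~1 is sound. The truncated sequence $\tilde\brho$ is unnecessary, since $(\rho_j^{-1})_{j\in\NN}\in\ell_p(\NN)$ forces $\supp(\brho)=\NN$. Your reduction of Step~2 to factors $1+C\varrho_j^{-q}$, together with $q(1-p/2)=p$, is also sound.

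The error is the sentence ``the $n$-series converges because $rq/(2m)>\tau q/m+1$, i.e.\ $r>2(\tau+1)m/q$''. Multiplying by $2m/q$ gives $r>2\tau+2m/q$, whereas $2(\tau+1)m/q=2\tau m/q+2m/q$. The hypothesis therefore gives what you need only if $q\le m$ or $\tau\lambda=0$. Since $q<2$, this covers all $m\ge 2$. It does not cover $m=1$ with $q\in(1,2)$, i.e.\ $p\in(2/3,1)$, and there no bookkeeping can help. For $\bnu=n\be_1\in\FF_1=\FF$ one has $\sigma_\bnu^2\le (r+1)\max\{1,\varrho_1^{2r}\}\,n^r$. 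Hence for $\lambda>0$ the terms $(p_\bnu(\tau,\lambda)\sigma_\bnu^{-1})^{q}$ are $\gtrsim n^{\tau q-rq/2}$, and the series diverges whenever $2(\tau+1)/q<r\le 2\tau+2/q$ (e.g.\ $q=3/2$, $\tau=3$, $r=6$).

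So, read literally with $p_\bnu$ raised to the power $q/m$, the statement needs $r>2\tau+2m/q$; under that hypothesis your proof goes through unchanged. The stated threshold $r>2(\tau+1)m/q$ is exactly the one for which $\sum_{\bnu\in\FF_m}p_\bnu(\tau,\lambda)\,\sigma_\bnu^{-q/m}<\infty$, with the weight \emph{not} raised to the power $q/m$. Your product computation proves this version verbatim with $\tau q/m$ replaced by $\tau$. This is also the natural form for counting sparse-grid points, via $\sum_{\bnu\in\Lambda(\xi)}p_\bnu(\tau,\lambda)\le\xi\sum_{\bnu\in\FF}p_\bnu(\tau,\lambda)\sigma_\bnu^{-q}$. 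You should state which version you prove and pair it with the matching threshold.
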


\begin{proof}
	This lemma is derived from Theorem~\ref{thm:derivative-bound} 
        in a manner similar to the proofs of  
        Theorem 3.25 and of Remark 3.16 in \cite{DNSZ} by using \cite[Lemma 5.3]{Zung}.
	\hfill
\end{proof}	
For $m \in \NN_0$, let $Y_m = (y_{m;k})_{k \in \pi_m}$ be the increasing sequence of  
the $m+1$ roots of the Hermite polynomial $H_{m+1}$, 
ordered as
$$
y_{m;-j} < \cdots < y_{m;-1} < y_{m;0} = 0 < y_{m;1} < \cdots < y_{m;j} \quad {\rm if} \  m = 2j,
$$
$$
y_{m;-j} < \cdots < y_{m;-1} < y_{m;1} < \cdots < y_{m;j} \quad {\rm if} \  m = 2j - 1,
$$
where 
$$
\pi_m:= 
\begin{cases}
	\{-j,-j+1,\ldots, -1, 0, 1, \ldots,j-1,j \} \ & \ \text{if} \  m = 2j; \\
	\{-j,-j+1,\ldots,-1, 1,\ldots,j-1,j \} \ & \ \text{if} \  m = 2j-1.
\end{cases}
$$
(in particular, $Y_0 = (y_{0;0})$ with $y_{0;0} = 0$).

For a continuous function $v:\RR\to V$ and for $m \in \NN_0$, 
we define the  
Lagrange interpolation operator $I_m$ in the Hermite nodes 
by
\begin{equation} \label{I_(v)}
	I_m(v):= \ \sum_{k\in \pi_m} v(y_{m;k}) L_{m;k}, \quad 
	L_{m;k}(y) := \prod_{j \in \pi_m \ j\not=k}\frac{y - y_{m;j}}{y_{n;k} - y_{m;j}},
\end{equation}		
(in particular, $I_0(v) = v(y_{0;0})L_{0;0}(y)= v(0)$ and $L_{0;0}(y)=1$). 
Notice that $I_m(v)$ is a function on $\RR$ taking values in $X$ and 
interpolating $v$ at $y_{m;k}$, i.e., $I_m(v)(y_{m;k}) = v(y_{m;k})$.   
We define the univariate operator $\Delta_m$ for $m \in \NN_0$ by
\begin{equation} \nonumber
	\Delta_m
	:= \
	I_m - I_{m-1},
\end{equation} 
with the convention $I_{-1} = 0$,  and the univariate operator $\Delta^*_m$ for even $m \in \NN_0$ by
\begin{equation} \nonumber
	\Delta^*_m
	:= \
	I_m - I_{m-2},
\end{equation} 
with the convention $I_{-2} = 0$. 
For a function $v: \RR^\NN \to X$, 
we introduce the tensor product operator $\Delta_\bnu$, $\bnu \in \FF$, 
by
\begin{equation} \label{Delta_bs(v)}
	\Delta_\bnu(v)
	:= \
	\bigotimes_{j \in \NN} \Delta_{\nu_j}(v),
\end{equation}
where the univariate operator
$\Delta_{\nu_j}$ is successively applied to the univariate function 
$\bigotimes_{i<j} \Delta_{\nu_i}(v)$ by considering it as a 
function of  variable $y_j$ with the other variables held fixed.
We define for $\bnu \in \FF$ in the same manner as $\Delta_\bnu$
\begin{equation} \label{eq:Prdct}
	I_\bnu
	:= \
	\bigotimes_{j \in \NN} I_{\nu_j}, \quad
	L_{\bnu;\bk}
	:= \
	\bigotimes_{j \in \NN} L_{\nu_j;k_j}, \quad
	\pi_\bnu
	:= \
	\prod_{j \in \NN} \pi_{\nu_j}.
\end{equation}

For $\bnu \in \FF$ and $\bk \in \pi_\bnu$, let $E_\bnu \subset \FF$ be the subset 
of all $\be$ such that $e_j$ is either $1$ or $0$ if $\nu_j > 0$, 
and $e_j$ is $0$ if $\nu_j = 0$, 
and let, for $U_0\subset \CC^\NN$ in Section~\ref{sct:holomorphy} 
$\by_{\bnu;\bk}:= (y_{\nu_j;k_j})_{j \in \NN} \in U_0$.
Recall 
$|\bnu|_1 := \sum_{j \in \NN} \nu_j$ for  $\bnu \in \FF$. 
It is easy to check that the interpolation operator $\Delta_\bnu$ 
can be represented in the ``combination'' form
\begin{equation} \label{Delta_bs=}
	\Delta_\bnu(v)				
	\ = \
	\sum_{\be \in E_\bnu} (-1)^{|\be|_1} I_{\bnu - \be} (v)
	\ = \
	\sum_{\be \in E_\bnu} (-1)^{|\be|_1} \sum_{\bk \in \pi_{\bnu - \be}} v(\by_{\bnu - \be;\bk}) L_{\bnu - \be;\bk}.
\end{equation}

Let $0 < q < \infty$ and $(\sigma_\bnu)_{\bnu \in \FF}$ be a set of positive numbers. 
For $\xi > 1$,  define the thresholded multi-index set
\begin{equation*} 
	\Lambda(\xi):= \{\bnu \in \FF: \, \sigma_\bnu^q \le \xi\}.
\end{equation*}
With $\Lambda(\xi)$, 
we introduce the sparse-grid interpolation operator $I_{\Lambda(\xi)}$  by
\begin{equation}\label{eq:SpI} 
	I_{\Lambda(\xi)}
	:= \
	\sum_{\bnu \in \Lambda(\xi)} \Delta_\bnu.
\end{equation}
By \eqref{Delta_bs=}, 
we can represent the operator $I_{\Lambda(\xi)}$
in the form (see \eqref{eq:Prdct})
\begin{equation*} 
I_{\Lambda(\xi)}(v)				
\ = \
\sum_{(\bnu,\be,\bk) \in G(\xi)}  (-1)^{|\be|_1} v(\by_{\bnu - \be;\bk})L_{\bnu - \be;\bk},
\end{equation*}
where the ``sparse grid'' is given by
\begin{equation}\label{eq:SpG} 
G(\Lambda(\xi))				
:= \
\big\{(\bnu,\be,\bk) \in \FF \times \FF \times \FF: \, \bnu \in \Lambda(\xi), \ \be \in E_\bnu, \ \bk \in \pi_{\bnu - \be} \big\}.
\end{equation}
The following theorem gives a bound for the convergence rate of the 
semi-discrete approximation of the parametric solution $u$ by the sparse-grid interpolation $I_{\Lambda(\xi)} u$.
\begin{theorem}\label{thm:interpolation}
Under the assumptions of Lemma~\ref{lemma:weighted-summability}, 
consider the sparse-grid interpolation
operator $I_{\Lambda(\xi)}$ for  $q:= 2p/(2-p)$ 
and the weight sequence $(\sigma_\bnu)_{\bnu\in\FF}$ defined by  \eqref{sigma_nu}.

	Then there exists a constant $C > 0$ such that for each $n > 1$, 
we can determine  a number $\xi_n$ so that for the set of points  
$(\by_{\bnu - \be;\bk})_{(\bnu,\be,\bk) \in G(\Lambda(\xi_n))}$, 
it holds that $|G(\Lambda(\xi_n))| \le n$ and 
	\begin{equation*} 
		\|\hat{u} - I_{\Lambda(\xi_n)}\hat{u}\|_{L^2(\RR^{\NN},V;\bgamma)}\leq Cn^{-(1/p - 1)}.
	\end{equation*}
\end{theorem}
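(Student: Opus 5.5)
The plan is to follow the abstract sparse-grid interpolation framework of \cite{DNSZ} (Section 3.9 / Theorem 3.x there), whose only input is a weighted $\ell_2$-summability of the Hermite coefficients. That input is exactly what Lemma~\ref{lemma:weighted-summability} supplies for $\hat u$.

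\emph{Step 1 (summability).} Apply Lemma~\ref{lemma:weighted-summability} with $m=1$, so that $\FF_1=\FF$, and with $\tau$, $\lambda$ chosen as required by the interpolation stability estimate below; in \cite{DNSZ} this is $\tau=3/q$ (or $\tau$ large enough) and $\lambda$ fixed. This gives
\[
\sum_{\bnu\in\FF}(\sigma_\bnu\|\hat u_\bnu\|_V)^2<\infty
\quad\text{and}\quad
(p_\bnu(\tau,\lambda)\sigma_\bnu^{-1})_{\bnu}\in\ell_q(\FF).
\]

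\emph{Step 2 (error decomposition).} Write $\hat u=\sum_\bnu \hat u_\bnu H_\bnu$, converging in $L^2(\RR^\NN,V;\bgamma)$. Since $\Lambda(\xi)$ is downward closed (the weights $\sigma_\bnu$ are monotone in each $\nu_j$) and $I_{\Lambda(\xi)}$ reproduces polynomials with multi-degree in $\Lambda(\xi)$, we get
\[
\hat u - I_{\Lambda(\xi)}\hat u=\sum_{\bnu\notin\Lambda(\xi)}\hat u_\bnu\,(H_\bnu - I_{\Lambda(\xi)}H_\bnu).
\]
Pointwise convergence of the Hermite series at the interpolation nodes must be justified; here we use the summability from Step 1, which makes the series absolutely convergent in the relevant weighted norms. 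Then we bound
\[
\|H_\bnu - I_{\Lambda(\xi)}H_\bnu\|_{L^2(\gamma)}\lesssim p_\bnu(\tau,\lambda),
\]
using the known univariate estimate $\|\Delta_m H_k\|_{L^2(\gamma)}\le (1+\lambda k)^{\tau}$ for Hermite Lagrange interpolation (\cite{DNSZ}, Lemma 3.x), tensorized over $\bnu$. Hence
\[
\|\hat u-I_{\Lambda(\xi)}\hat u\|\lesssim \sum_{\sigma_\bnu^q>\xi}\|\hat u_\bnu\|_V\, p_\bnu(\tau,\lambda).
\]
By Cauchy--Schwarz this is at most
\[
\Big(\sum(\sigma_\bnu\|\hat u_\bnu\|_V)^2\Big)^{1/2}\Big(\sum_{\sigma_\bnu^q>\xi}p_\bnu^2\sigma_\bnu^{-2}\Big)^{1/2}.
\]
Since $p_\bnu\sigma_\bnu^{-1}\in\ell_q$ and $\sigma_\bnu^{-1}<\xi^{-1/q}$ on the tail, the second factor is $\lesssim \xi^{-(1/q-1/2)}=\xi^{-(1/p-1)}$.

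\emph{Step 3 (counting).} Bound the grid size $|G(\Lambda(\xi))|\le\sum_{\bnu\in\Lambda(\xi)}\sum_{\be\in E_\bnu}|\pi_{\bnu-\be}|\le\sum_{\bnu\in\Lambda(\xi)}p_\bnu(1,2)$, using $|\pi_\nu|=\nu+1$ and $|E_\bnu|\le 2^{|\supp\bnu|}$. Then
\[
\sum_{\bnu\in\Lambda(\xi)}p_\bnu(1,2)\le \xi\sum_\bnu p_\bnu(1,2)\sigma_\bnu^{-q}\lesssim\xi,
\]
provided $(p_\bnu(1/q,2)\sigma_\bnu^{-1})\in\ell_q$. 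This holds by Step 1 once $\tau\ge 1/q$ and $\lambda$ is adapted, which is why we choose $\tau$ large in Lemma~\ref{lemma:weighted-summability}; its hypothesis $r>2(\tau+1)/q$ is met by taking $r$ large. Choosing $\xi_n=n/C$ gives $|G(\Lambda(\xi_n))|\le n$ and error $\lesssim n^{-(1/p-1)}$.

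The main obstacle is Step 2's stability bound $\|\Delta_\bnu H_{\bnu'}\|\lesssim p_{\bnu'}(\tau,\lambda)$, together with the interchange of interpolation and the infinite Hermite series. I would import both from \cite{DNSZ} (and \cite{Zung}) rather than reprove them, and check only that a single choice of $(\tau,\lambda)$ serves both Steps 2 and 3.
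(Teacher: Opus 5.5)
Your route is the paper's route. The paper's proof only observes that Lemma~\ref{lemma:weighted-summability} (with $m=1$, so $\FF_1=\FF$) verifies the hypotheses of \cite[Corollary 3.1]{Zung} for $X=V$, and then uses $1/q-1/2=1/p-1$. Your Steps 1--3 unpack that abstract corollary, and in the unpacking one inference fails as written.

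\textbf{The tail estimate in Step 2.} You deduce $\sum_{\sigma_\bnu^q>\xi}p_\bnu^2\sigma_\bnu^{-2}\lesssim\xi^{-(2/q-1)}$ from $(p_\bnu(\tau,\lambda)\sigma_\bnu^{-1})_{\bnu}\in\ell_q(\FF)$ and from $\sigma_\bnu^{-1}<\xi^{-1/q}$ on the tail, with the same $\tau$ as in your stability bound. That does not follow. Writing $(p_\bnu\sigma_\bnu^{-1})^2=(p_\bnu\sigma_\bnu^{-1})^q(p_\bnu\sigma_\bnu^{-1})^{2-q}$, you would need $p_\bnu\sigma_\bnu^{-1}\lesssim\xi^{-1/q}$ on the tail. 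But the tail is cut out by $\sigma_\bnu$ alone, and $p_\bnu$ is unbounded there. The correct split is $\sigma_\bnu^{-2}\le\xi^{-(2-q)/q}\sigma_\bnu^{-q}$ on the tail, which leaves
\[
\sum_{\bnu\in\FF}p_\bnu(\tau_0,\lambda)^2\sigma_\bnu^{-q}=\big\|\big(p_\bnu(2\tau_0/q,\lambda)\sigma_\bnu^{-1}\big)_{\bnu\in\FF}\big\|_{\ell_q(\FF)}^q .
\]
Here $\tau_0$ is the exponent in your bound $\|H_\bnu-I_{\Lambda(\xi)}H_\bnu\|_{L^2(\gamma)}\lesssim p_\bnu(\tau_0,\lambda)$. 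So you must invoke Lemma~\ref{lemma:weighted-summability} with $\tau=\max\{2\tau_0/q,1/q\}$ and $\lambda\ge 2$. By monotonicity of $p_\bnu$ in $\tau$ and $\lambda$, this one choice serves both Step 2 and Step 3. The lemma allows arbitrary $\tau,\lambda\ge 0$ at the cost of $r>2(\tau+1)/q$, so the repair is available. It is, however, precisely this enlarged exponent that produces the rate $\xi^{-(1/q-1/2)}$.

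\textbf{Two smaller slips.} First, tensorizing $\|\Delta_m H_k\|_{L^2(\gamma)}\le(1+\lambda k)^\tau$ bounds each single $\Delta_{\bnu'}H_\bnu$. However, $I_{\Lambda(\xi)}H_\bnu=\sum_{\bnu'\in\Lambda(\xi),\,\bnu'\le\bnu}\Delta_{\bnu'}H_\bnu$ has up to $\prod_j(1+\nu_j)$ terms. Hence $\tau_0=\tau+1$ (with $\lambda$ replaced by $\max\{\lambda,1\}$), not $\tau$; this is harmless but enters the choice above.

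Second, in Step 3 the facts $|E_\bnu|=2^{|\supp\bnu|}$ and $|\pi_\nu|=\nu+1$ only give $\prod_{j\in\supp\bnu}2(1+\nu_j)$, which exceeds $p_\bnu(1,2)$. The bound $|G(\Lambda)|\le\sum_{\bnu\in\Lambda}p_\bnu(1,2)$ is nevertheless true, in fact with equality, because the sum over $\be$ factorizes: $\sum_{e_j\in\{0,1\}}(\nu_j-e_j+1)=2\nu_j+1$.

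Deferring the pointwise convergence of the Hermite series at the finitely supported nodes to \cite{DNSZ,Zung} is what the paper does as well.
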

\begin{proof}
By Lemma~\ref {lemma:weighted-summability},
the assumptions of  \cite[Corollary 3.1]{Zung} 
hold for $X=V$ with $0 < q<2$ and $q:= 2p/(2 -p)$ for $p \in (0,1)$ 
given as in the assumptions of Lemma~\ref {lemma:weighted-summability}.
Therefore, by applying \cite[Corollary 3.1]{Zung} for $q$ and 
taking account of $1/q -1/2 =1/p - 1$, we prove the theorem.
\hfill
\end{proof} 
We present corresponding results for sparse-grid (Smolyak) quadrature 
    over $\RR^\NN$ with respect to $\bgamma$.
The symmetry of the Gaussian measure $\bgamma$ with respect to co-ordinate reflection 
    $y_j \mapsto -y_j$ for all $j\in \NN$
    implies vanishing of Hermite-pc terms with at least one odd-degree Hermite
    polynomial.  
    To exploit such cancellations in Hermite-pc expansions due to parity,
we use the index set
$$
\FF_\rev := \{\bnu \in \FF: \nu_j \ {\rm  even}, \ j \in \NN \} \subset \FF_2.
$$ 
The interpolation operators $\Delta^*_\bnu$ for 
$\bnu \in \FF_\rev$, $I^*_\Lambda$ for a finite set $\Lambda \subset \FF_\rev$ 
are defined by replacing $\Delta_{\nu_j}$ with $\Delta^*_{\nu_j}$, $j \in \NN$.

If $v$ is a function defined on $\RR$ taking values in the space $V$,  
the function  $I_m(v)$ in  \eqref{I_(v)}  generates the quadrature formula defined as
\begin{equation} \nonumber
	Q_m(v)
	:= \ \int_{\RR} I_m(v)(y) \, \rd \gamma(y)
	\ = \
	\sum_{k=0}^m\omega_{m;k}\, v(y_{m;k}),
\end{equation}	
where
\begin{equation} \nonumber
	\omega_{m;k}
	:=  \int_{\RR} L_{m;k}(y) \, \rd \gamma(y)
	\ = \
	\frac{1}{(m+1) H_m^2(y_{m;k})}.
\end{equation}
We define the univariate quadrature-increment $\Delta^{{\rm Q}}_m$ for even $m \in \NN_0$ by
\begin{equation} \nonumber
	\Delta^{{\rm Q}}_m
	:= \
	Q_m - Q_{m-2},
\end{equation} 
with the convention $Q_{-2} := 0$. 
For  a function $v$ defined on $\RR^\NN$ taking values in $V$, 
we introduce the operator
\begin{equation} \nonumber
	\Delta^{{\rm Q}}_\bnu
	:= \
	\bigotimes_{j \in \NN} \Delta^{{\rm Q}}_{\nu_j},
\end{equation}
in the same manner as $\Delta_\bnu$. 
For $\xi>1$, 
let the set $\Lambda_\rev^*(\xi)$ be defined by 
\begin{equation} \label{Lambda_rev(xi)2}
	\Lambda_\rev^*(\xi):= 
	\{\bnu \in \FF_\rev: \, \sigma_\bnu^{q/2} \le \xi\}.
\end{equation}
We introduce the sparse-grid quadrature operator $Q_{\Lambda_\rev^*(\xi)}$ 
which is generated by the sparse-grid interpolation operator $I^*_{\Lambda_\rev^*(\xi)}$ as follows
\begin{equation} \label{eq:SpQ}
	Q_{\Lambda_\rev^*(\xi)}
	:= \
	\sum_{\bnu \in \Lambda_\rev^*(\xi)} \Delta^{{\rm Q}}_\bnu (v)
	\ = \
	\int_{{\RR^\NN}} I^*_{\Lambda_\rev^*(\xi)} v (\by)\, \rd \bgamma(\by).
\end{equation} 
\begin{theorem}\label{thm:quadrature}
Under the assumptions of 
Lemma~\ref{lemma:weighted-summability}, 
with the index set $\Lambda_\rev^*(\xi)$ as in \eqref{Lambda_rev(xi)2},
consider the sparse-grid Quadrature operator 
$Q_{\Lambda_\rev^*(\xi)}$ 
defined by 
the weight-sequence $(\sigma_\bnu)_{\bnu\in\FF}$ in \eqref{sigma_nu}. 

Then there exists a constant $C$ such that for each $n \in \NN$, 
there exists a number $\xi_n$  such that  
the number of quadrature points $|G(\Lambda_\rev^*(\xi_n))|$ is at most $n$ 
and 
	\begin{equation} \label{u-Q_Lambdau_pde}
		\left\|\int_{{\RR^\NN}}{\hat{u}}(\by)\, \rd \bgamma(\by ) - Q_{\Lambda_\rev^*(\xi_n)}{\hat{u}}\right\|_V
		\ \le \
		Cn^{-(2/p - 3/2)},
	\end{equation}			
	and if $\phi\in V'$
	\begin{equation} \label{u-Q_Lambdau_phi-pde}
		\left|\int_{\RR^\NN}
		\big\langle \phi, \hat{u} (\by) \big\rangle\, \, \rd \bgamma(\by ) 
		-  \big\langle \phi, Q_{\Lambda_\rev^*(\xi_n)} \hat{u} \big\rangle \right|
		\ \le \
		C\|\phi\|_{V'} n^{-(2/p - 3/2)}.
	\end{equation}
\end{theorem}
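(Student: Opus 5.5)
The plan mirrors the proof of Theorem~\ref{thm:interpolation}: reduce the statement to an abstract sparse-grid quadrature result for Gaussian-measure problems, and verify its hypotheses with Lemma~\ref{lemma:weighted-summability}. The abstract result I have in mind is the quadrature counterpart of \cite[Corollary 3.1]{Zung}, as used in \cite{DNSZ} (Section~3.9 there). It states the following. Let $v\in L^2(\RR^\NN,X;\bgamma)$ and suppose that for some $0<q<4$ there are weights with $\sum_{\bnu\in\FF_\rev}(\sigma_\bnu\|v_\bnu\|_X)^2<\infty$ and $(p_\bnu(\tau,\lambda)\sigma_\bnu^{-1})_{\bnu\in\FF_\rev}\in\ell_{q/2}(\FF_\rev)$. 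Then for each $n$ there is $\xi_n$ with $|G(\Lambda_\rev^*(\xi_n))|\le n$ and
\[
\Big\|\int v\,\rd\bgamma - Q_{\Lambda^*_\rev(\xi_n)}v\Big\|_X\lesssim n^{-(2/q-1/2)},
\]
where $\tau$ and $\lambda$ are chosen so that the Lebesgue constants of the increments $\Delta^*_m$ (or of $\Delta^{\rm Q}_m$) are bounded by $(1+\lambda m)^{\tau}$.

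First, I apply Lemma~\ref{lemma:weighted-summability} with $m=2$. The summability over $\FF_2$ restricts to $\FF_\rev\subset\FF_2$, and it gives
\[
\sum_{\bnu\in\FF_\rev}\big(\sigma_\bnu\|\hat u_\bnu\|_V\big)^2<\infty,\qquad \big(p_\bnu(\tau,\lambda)\sigma_\bnu^{-1}\big)_{\bnu\in\FF_\rev}\in\ell_{q/2}(\FF_\rev),
\]
with $q=2p/(2-p)$. This requires $r>4(\tau+1)/q$, which I ensure by taking $r$ large. Here $\tau,\lambda$ are the constants from the known polynomial bound on the Gauss--Hermite interpolation and quadrature increments, namely $\|\Delta^*_m\|\le(1+\lambda m)^\tau$ on the relevant weighted space; I quote this bound from \cite{DNSZ}.

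Next comes parity. Since $\gamma$ is symmetric and the Gauss--Hermite rule $Q_m$ is exact on odd polynomials, every Hermite term $\hat u_\bnu H_\bnu$ with some odd $\nu_j$ integrates to zero under both $\int\cdot\,\rd\bgamma$ and $Q_{\Lambda^*_\rev}$. This is why only $\FF_\rev$ enters, and the bound becomes the tail
\[
\sum_{\bnu\notin\Lambda^*_\rev(\xi)}\|\hat u_\bnu\|_V\,\|\Delta^{\rm Q}\text{-error of }H_\bnu\|.
\]
The error of each term is bounded by $p_\bnu(\tau,\lambda)$. By Cauchy--Schwarz together with the weighted summability, the tail is then $\lesssim\xi^{-(1/q'\text{-type})}$. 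The cardinality satisfies $|G(\Lambda^*_\rev(\xi))|\lesssim\sum_{\bnu\in\Lambda^*_\rev(\xi)}p_\bnu(1,2)\lesssim\xi$ by the $\ell_{q/2}$ condition. Choosing $\xi_n\asymp n$ then gives the rate $2/q-1/2=2/p-3/2$. This is the computation inside the cited corollary, so I invoke it rather than redo it.

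The functional bound \eqref{u-Q_Lambdau_phi-pde} follows immediately from \eqref{u-Q_Lambdau_pde}, because $\phi$ is linear and bounded and commutes with both the Bochner integral and the finite quadrature sum.

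The main obstacle is the bookkeeping that matches exponents: the abstract result must be applied with summability exponent $q/2$ on $\FF_\rev$, and one must check that Lemma~\ref{lemma:weighted-summability} with $m=2$ indeed delivers exactly this. If a direct citation is not available in that form, I would reproduce the argument of \cite[Theorem 3.30]{DNSZ} with $X=V$.
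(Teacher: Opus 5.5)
Your proposal is correct and is essentially the paper's argument. You apply Lemma~\ref{lemma:weighted-summability} with $m=2$, restrict the weighted $\ell_2$-summability and the $\ell_{q/2}$-membership of $\big(p_\bnu(\tau,\lambda)\sigma_\bnu^{-1}\big)$ from $\FF_2$ to $\FF_\rev\subset\FF_2$, and invoke the sparse-grid quadrature result \cite[Corollary 4.1]{Zung} using $2/q-1/2=2/p-3/2$, with \eqref{u-Q_Lambdau_phi-pde} then following from \eqref{u-Q_Lambdau_pde} by linearity and boundedness of $\phi$.
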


\begin{proof}
Note that $\FF_\rev \subset \FF_2$.
By Lemma~\ref {lemma:weighted-summability} 
the assumptions of  \cite[Corollary 3.1]{Zung} hold 
for $X=V$ with $0 < q/2<1$ and $q:= 2p/(2-p)$ for $p \in (0,1)$ 
given as in the assumptions of Lemma~\ref{lemma:weighted-summability}.
Therefore, 
by applying \cite[Corollary 4.1]{Zung} for $q/2$ and taking account $2/q -1/2 =2/p - 3/2$,
we prove the theorem.
\hfill  
\end{proof} 
Observe that Theorems~\ref{thm:interpolation} and \ref{thm:quadrature}  
provide convergence rates free from the  ``curse of dimensionality".
\emph{Arbitrary high convergence rates} are possible, 
for sufficiently small summability exponent $p$, i.e., 
with sufficient sparsity of the Hermite expansion coefficients 
$(\hat{u}_\bnu )_{\bnu\in \FF}$.
\subsection{Quasi-Monte Carlo integration}
\label{sct:QMC}
We shall next show that our regularity results also 
allow for convergence rate bounds of 
quasi-Monte Carlo quadrature to compute the
expectation of the random solution of moments of it. 
Since the underlying random variables are Gaussian 
and hence unbounded, the application of the quasi-Monte 
Carlo method requires some special care, compare \cite{QMC2,
QMC3}. We follow here the approach of \cite{HPS} and apply 
the \emph{Halton sequence} as points of integration.

\begin{definition}
Let \(b_1,\ldots,b_m\) denote the first \(m\) prime numbers. 
The \(m\)-dimensional \emph{Halton sequence} is given by 
\[
\boldsymbol{\xi}^i=[h_{b_1}(i),\ldots,h_{b_m}(i)]^\intercal,\quad i=0,1,2,\ldots,
\]
where \(h_{b_j}(i)\) denotes the \(i\)-th element of the \emph{van der Corput 
sequence} with respect to \(b_j\). That is, if \(i=\cdots c_3c_2c_1\) in radix \(b_j\), then 
\(h_{b_j}(i)=0.c_1c_2c_3\cdots\) in radix \(b_j\). 
\end{definition}

The associated quasi-Monte Carlo quadrature rule 
for a given function $v$ defined on $[0,1]^m$ is then 
defined by
\begin{equation}\label{eq:QMC}
\bQ_N v\isdef\frac 1 N \sum_{i=1}^N v(\boldsymbol{\xi}^i),
\end{equation}
where \(N\) denotes the number of samples and 
\(\boldsymbol{\xi}^i\in[0,1]^m\) denotes the $i$-th Halton point.
The quadrature rule \eqref{eq:QMC} is known to give an
approximation to the integral
\[
\bI v\isdef\int_{(0,1)^m} v(\bz)\d\bz
\]
provided that the integrand $v$ is smooth enough.

To obtain a quasi-Monte Carlo method for the integration domain 
\(\mathbb{R}^m\), we map the quadrature points to \(\mathbb{R}^m\) 
by the inverse distribution function. This is equivalent to the transformation 
of the integrals under consideration to the unit cube. 
To that end, we 
define the cumulative normal distribution
\[
\Phi\colon\mathbb{R}\to(0,1),\quad
y\mapsto\Phi(y)\isdef\int_{-\infty}^y\rho(y')\d y'
\]
where the density $\rho$ is as in \eqref{g} and its inverse
\[
\Phi^{-1}\colon (0,1)\rightarrow\mathbb{R}.
\]
Then, 
for a function \(g\in L^1(\mathbb{R};\gamma)\), 
it is well-known that with $\rho$ as in \eqref{g}
\[
\int_{\mathbb{R}}g(y)d\gamma(y) 
=
\int_{\mathbb{R}}g(y)\rho(y)\d y
=
\int_0^1 g\big(\Phi^{-1}(z)\big)\d z
\]
upon the substitution \(z=\Phi(y)\). 
Especially, we have \(g\circ\Phi^{-1}\in L^1\big((0,1)\big)\).
By defining \({\boldsymbol{\Phi}}(\by)\isdef[\Phi(y_1),\ldots,\Phi(y_m)]^\intercal\), we may extend
the above integral transform to the multivariate case, i.e.~\(g\in L^1_\rho(\mathbb{R}^m)\) and
\[
\int_{\mathbb{R}^m}g(\by)\rho(\by)\d \by 
= \int_{(0,1)^m} g\big(\boldsymbol{\Phi}^{-1}(\bz)\big)\d \bz.
\]

In our application, the integrand $g(\by) = \hat{u}(\bxi,\by)$ is
the solution to \eqref{eq:problemD}--\eqref{f_a} and itself depends 
in addition also on the spatial variable $\bxi\in {D_{\refd,\kappa}}$ 
as we are interested in the expectation $\mathbb{E}[\hat{u}(\bxi)]$. 
Moreover, 
in order to be able to apply the quasi-Monte Carlo quadrature rule, 
we need to truncate the stochastic dimension in accordance with
\begin{equation}\label{eq:dim-trunc}
  \hat{u}(\bxi,\by) = \hat{u}_m(\bxi,y_1,\ldots,y_m) 
  	+ \hat{u}_m^c(\bxi,\by).
\end{equation}
Here, $\hat{u}_m$ denotes the solution of the boundary value 
problem \eqref{eq:problemD}--\eqref{f_a} for the $m$-dimensional
model, i.e., 
the model where the series in \eqref{a=exp} is truncated after $m$ terms. 
This
means that $\hat{u}_m(\bxi,\by)$ is understood via zero padding, i.e.
\[
  \hat{u}_m(\bxi,y_1,\ldots,y_m) = \hat{u}(\bxi,y_1,\ldots,y_m,{\bf 0}).
\]
The function 
$\hat{u}_m^c(\bxi) := \hat{u}(\bxi)-\hat{u}_m(\bxi)$ 
reflects the so-called dimension truncation error.
In view of \eqref{eq:dim-trunc}, we have
with $\mathbb{E}$ denoting expectation with respect to $\bgamma$
\[
  \mathbb{E}[\hat{u}(\bxi)] 
  = \int_{(0,1)^m} \hat{u}_m\big(\bxi,\boldsymbol{\Phi}^{-1}(\bz)\big)\d \bz
  + \mathbb{E}[\hat{u}_m^c(\bxi)],
\]
which we are going to approximate by
\[
  \mathbb{E}[\hat{u}(\bxi)] 
  \approx \bQ_N\big(\hat{u}_m(\bxi,\boldsymbol{\Phi}^{-1}(\cdot)\big)
  + \mathbb{E}[\hat{u}_m^c(\bxi)].
\]
Here, the dimension truncation parameter 
$m$ has to be chosen appropriately such that 
the truncation error 
$\|\mathbb{E}[\hat{u}_m^c(\bxi)]\|_V$ 
is sufficiently small. 
The rate of convergence of this error as $m \to \infty$
depends solely on the 
summability of the coefficients in the series \eqref{a=exp}, 
compare \cite[Thm.~4.1]{GK}. 

As shown in \cite{QMC}, the convergence rate of the 
quasi-Monte Carlo quadrature based on Halton points for 
the determination of the expectation $\mathbb{E}[\hat{u}_m]$ 
depends only mildly on the dimensionality $m$ of the random 
parameter under certain properties of the sequence 
$\brho = (\rho_k)_{k\in\NN}$ from \eqref{u-y}.
\begin{theorem}\label{errNQMC}
The quasi-Monte Carlo quadrature using Halton points for 
approximating the expectation $\EE[\hat{u}]$ of the solution 
\(\hat{u}\) to  \eqref{a=exp} provides a 
convergence rate which depends only linearly on the dimensionality 
\(m\) if the sequence $\brho = (\rho_k)_{k\in\NN}$ from \eqref{kappa1}
satisfies \(\rho_k\gtrsim k^{4+\varepsilon}\) 
More precisely, for each \(\delta>0\), the error 
of the quasi-Monte Carlo quadrature with \(N\) Halton points 
satisfies
\begin{equation}\label{eq:QMCerror}
\big\|\EE[\hat{u}]-{\bf Q}_N\hat{u}\big\|_V
\lesssim (m+1)N^{-1+\delta}\|f\|_{L^2({\RR^2})} + \|\mathbb{E}[\hat{u}_m^c]\|_V.
\end{equation}
The constant hidden in this error estimate depends on 
$\varepsilon$ and $\delta$, but 
neither on the dimension $m\in\NN$ nor on the number of points $N\in\NN$.
\end{theorem}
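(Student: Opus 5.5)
The plan is to reduce the statement to the abstract QMC error bound for Halton points with Gaussian weights from \cite{QMC} (in the spirit of \cite{HPS}). We first split the error by the dimension truncation \eqref{eq:dim-trunc}:
\[
\EE[\hat{u}]-{\bf Q}_N\hat{u} = \Big(\int_{(0,1)^m}\hat{u}_m\big(\boldsymbol{\Phi}^{-1}(\bz)\big)\d\bz - \bQ_N\hat{u}_m\big(\boldsymbol{\Phi}^{-1}(\cdot)\big)\Big) + \EE[\hat{u}_m^c].
\]
The second term is kept as it stands in \eqref{eq:QMCerror}. Only the first term, the pure QMC error for the $m$-dimensional integrand $\hat{u}_m\circ\boldsymbol{\Phi}^{-1}$, has to be controlled by $(m+1)N^{-1+\delta}\|f\|_{L^2(\RR^2)}$.

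For the QMC error we use the Koksma--Hlawka inequality in the Hardy--Krause form, or its weighted variant from \cite{QMC}. The error is then bounded by the star discrepancy of the first $N$ Halton points, which is $\mathcal{O}(N^{-1}(\log N)^m)\lesssim_\delta N^{-1+\delta}$, times a variation norm of the integrand. That norm is a sum over subsets $\mfu\subseteq\{1,\dots,m\}$ of $\int|\partial^{\mfu}(\hat{u}_m\circ\boldsymbol{\Phi}^{-1})|\d\bz_\mfu$. By the chain rule $\partial_{z_j}=\rho(y_j)^{-1}\partial_{y_j}$ and $\d z_j=\rho(y_j)\d y_j$, each such term equals $\int_{\RR^{|\mfu|}}\|\partial^{\mathbf{1}_\mfu}\hat{u}_m(\by)\|_V\,\d\by_\mfu$, taken in the anchored/weighted sense of \cite{QMC}. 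This lifts the Gaussian weight and is precisely why \cite{QMC} works with weighted norms in which the integrability of $\rho^{-1}$-growth is compensated.

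Next we insert Theorem~\ref{thm:derivative-bound} with $\bnu=\mathbf{1}_\mfu$. It gives
\[
\|\partial^{\mathbf 1_\mfu}\hat{u}_m(\by)\|_V\le e^{C_\brho}\frac{1}{\prod_{j\in\mfu}\rho_j}\frac{(\|b'(\by)\|_{L^\infty}+2)^2}{1-B_\brho}e^{\|b(\by)\|_{L^\infty}}\|f\|_{L^2(\RR^2)}.
\]
The $\by$-dependent factor is bounded by $C\exp\big(c\sum_{k\le m}|y_k|\,\|\psi_k\|_{W^1_\infty}\big)$. Since $\rho_k\gtrsim k^{4+\varepsilon}$ together with \eqref{kappa1} forces $\|\psi_k'\|_{L^\infty}$ and $\|\psi_k\|_{L^\infty}$ to decay like $k^{-4-\varepsilon}$, this exponential weight is integrable against the mixed weights of \cite{QMC}. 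The integral is bounded uniformly in $m$ because $\prod_k\EE[e^{c|y_k|\|\psi_k\|}]<\infty$.

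The remaining step, summing over $\mfu$ with the factors $\prod_{j\in\mfu}\rho_j^{-1}$, is the main obstacle. We must show that $\sum_{\mfu\subseteq\{1..m\}}\prod_{j\in\mfu}C\rho_j^{-1}\,w_j$ is uniformly bounded. Here $w_j$ is the growth weight that \cite{QMC} attaches to coordinate $j$ when handling the unbounded Gaussian transform and the $\log$-powers of the Halton discrepancy. Only a contribution linear in $m$ should come from the boundary/anchoring terms of each coordinate. We would try first to verify directly the hypothesis of \cite{QMC}, namely that the derivative bound $\prod_{j}\rho_j^{-1}$ with $\rho_j\gtrsim j^{4+\varepsilon}$ suffices there. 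The product $\prod_j(1+Cj^{-4-\varepsilon}w_j)$ then converges, with $w_j$ growing at most like $j^{3}$ via the $j$-th prime base $b_j\sim j\log j$ entering the Halton discrepancy constants. Combining these bounds yields \eqref{eq:QMCerror} with constants depending only on $\varepsilon,\delta$.
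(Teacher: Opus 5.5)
Your main line of argument, a direct Koksma--Hlawka estimate, has a genuine gap, although your fallback sentence is the paper's proof.

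\textbf{Why the Koksma--Hlawka route fails.} After the substitution $z_j=\Phi(y_j)$ the Gaussian density cancels. The variation terms are therefore $\int_{\RR^{|\mfu|}}\|\partial^{\mathbf{1}_\mfu}\hat{u}_m(\by)\|_V\,\d\by_\mfu$ with respect to \emph{Lebesgue} measure. In the Hardy--Krause variation the remaining coordinates are also anchored at $z_j=1$, i.e.\ at $y_j=+\infty$. Your majorant $C\exp\big(c\sum_{k\le m}|y_k|\,\|\psi_k\|_{W^1_\infty(\TT)}\big)$ is not Lebesgue integrable. Invoking $\prod_k\EE[e^{c|y_k|\|\psi_k\|}]$ silently reinserts the weight you have just removed.

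This is not merely a lossy bound. Take a constant dilation $a=e^{cy_1}$ of the disk and $0\le f\in L^1(\RR^2)$, $f\not\equiv 0$. The Dirichlet energy is scale invariant, so $\|\hat{u}\|_V^2=\int_{D_R}fu\sim(2\pi)^{-1}\|f\|_{L^1}^2\log R$ with $R=e^{cy_1}$. Hence $\hat{u}_m\circ\boldsymbol{\Phi}^{-1}$ is unbounded as $z_1\to 1$, its variation is infinite, and Koksma--Hlawka is vacuous.

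Even granting finite variation, converting the unweighted Halton bound $D_N^*\lesssim N^{-1}(\log N)^m$ into $N^{-1+\delta}$ costs $\sup_{N\ge 1}(\log N)^mN^{-\delta}=(m/(e\delta))^m$. That is incompatible with an $m$-independent constant and the factor $(m+1)$. The weights $w_j\sim j^3$ you postulate are read off from the exponent $4+\varepsilon$, not derived. Handling the boundary singularity of the transformed integrand and the dimension dependence is exactly the non-trivial content of \cite{QMC}, and your proposal does not reproduce it.

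\textbf{What the paper does.} The paper does not re-derive any QMC error bound. It observes that \eqref{eq:uybd}, for general $\bnu$ and combined with $\bnu!\le|\bnu|!$, yields the regularity hypothesis \cite[Eq.~(14)]{QMC} with weights $\gamma_k=\rho_k^{-1}$. Your remark that \eqref{kappa1} and \eqref{kappa_0} give $\|\psi_k\|_{L^\infty(\TT)}\le C_\brho\rho_k^{-1}$ and $\|\psi_k'\|_{L^\infty(\TT)}\le B_\brho\rho_k^{-1}$ is what controls the $\by$-dependent factor. Indeed, $(\|b'(\by)\|_{L^\infty(\TT)}+2)^2e^{\|b(\by)\|_{L^\infty(\TT)}}\le 4\exp\big((B_\brho+C_\brho)\sum_k\rho_k^{-1}|y_k|\big)$. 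The paper then applies \cite[Thm.~4.3]{QMC}, which under $\gamma_k\lesssim k^{-4-\varepsilon}$ gives \eqref{eq:QMCerror}. The truncation term $\|\EE[\hat{u}_m^c]\|_V$ is carried along exactly as in your first step. You should drop the Koksma--Hlawka reconstruction and carry out this verification instead.
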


\begin{proof}
Upon observing that $\bnu!\leq |\bnu|!$ for $\bnu \in \FF$,
estimate \eqref{eq:uybd} 
(or \eqref{u-y} with \eqref{norm{hat{u}(by)}{V}})
implies \cite[Eq.~(14)]{QMC} with 
$(\gamma_k)_{k\in\ZZ} = (\rho_k^{-1})_{k\in\ZZ}$,
or, equivalently, 
\cite[Eq.~(18)]{QMC} with $p=2$ 
(the proof given there for $p>2$ remains valid verbatim then).
As $\gamma_k\lesssim k^{-4-\varepsilon}$ implies 
the error estimate in \eqref{eq:QMCerror} by using 
\cite[Thm.~4.3]{QMC}, we conclude the assertion.
\end{proof}

\section{Finite element discretization}
\label{sct:numerix}
\subsection{Discretization in the reference domain $D_\refd$}
\label{sct:FEM1}
We investigate in this subsection the discretization of a general, 
parameter-dependent, second order, elliptic  boundary value 
problem by the finite element method. 
To this end, let $D_{\refd} \subset \mathbb{R}^2$ 
denote a generic 
bounded smooth reference domain
(such as $D_\refd = D_{\refd,\kappa}$ in \eqref{eq:Dref}, for $0 < \kappa < 1$ 
	but is not necessary for the definition of the finite element discretization).
Throughout this section, the parameters $\by$ will be real-valued.
We recall the set $U_0\subset \RR^\NN$ from \eqref{eq:U0}.

In $D_\refd$, 
we introduce a quasi-uniform, regular, simplicial partition 
$\mathcal{T}_h$ and consider 
continuous, piecewise linear Lagrangian finite element 
spaces spanned by the usual ``hat''-function bases
$\{\varphi_i\}_{i=1}^N$ defined on $\mathcal{T}_h$.
The finite element space is denoted by
\[
V_h = \operatorname{span}\{\varphi_i:i=1,\ldots,N\}
\subset V.
\]
In order to avoid errors by the geometry approximation,
we consider Zenisek's curved triangles in case of a 
non-polygonal boundary, compare \cite{Z}. We however
mention that a piecewise linear approximation of the
boundary is consistent and the subsequent theory can
be extended to this situation straightforwardly by using
standard arguments from \cite{Brenner/Scott} or 
\cite[Thm.~III.1.7]{Braess}.
We also assume for now exact integration in the stiffness matrix and the load vector.
With these assumptions,
given $\by\in U_0\subset \RR^\NN$, 
we define the parametric bilinear form $B(\cdot,\cdot;\by):V\times V\to \RR$
\begin{equation}\label{myproblem1}
	B(\hat{u} , v ;\by) := \int_{D_{\refd}} M(\by)\nabla \hat{u}(\by) \cdot \nabla v
\end{equation}
and the parametric linear form $L(\cdot;\by): V\to \RR$
\begin{equation}\label{myproblem2}
	L(v;\by) := \langle f(\by),v\rangle.
\end{equation}
\begin{remark}\label{rmk:U1}
	In the analysis of first order finite element approximation errors in $D_{\refd,\kappa}$, 
	convergence rates require higher Sobolev regularity of parametric solutions 
        $\hat{u}(\by)$ on $D_{\refd,\kappa}$.
	This, in turn, 
        imposes stronger regularity requirements on the map $F(a)$ and also on 
	the map $\by \mapsto a(\by)$.
	
		For $a$ as in \eqref{a=exp},
		we have $a(\by)\in W^2_\infty(\TT)$ for all $\by\in U_1 \subset \RR^\NN$ 
		where $U_1$ is measurable and of full measure with respect to 
		the tensor-product Gaussian measure $\bgamma$ on $\RR^\NN$ if, 
		in addition to the conditions in Lemma \ref{lem:full-measure}, 
	there also holds
	\begin{equation}\label{eq:U1}
		\Bigg\|\sum_{k \in \NN} \tau_k|\psi_k''| \Bigg\|_{C(\TT)} < \infty.
	\end{equation}
        For $a(\by)\in W^{2,\infty}(\TT)$, 
        the diffusion coefficient $M(\by)$ in \eqref{eq:varformD(by)}
        has entries $M_{ij}(\by)(x)$ which are (also $\bgamma$-a.s.) 
        Lipschitz continuous functions of $x\in \overline{D_{\refd,\kappa}}$,
        for $0 < \kappa < 1$.
        Due to $f\in L^2(D_{\refd,\kappa})$, 
        for $0<\kappa<1$ then $\hat{u}(\by) \in H^2(D_{\refd,\kappa})$. 
        For this regularity and for $a$ as in \eqref{a=exp}, 
        the condition $\kappa > 0$ is essential.
        The case $\kappa = 0$ is discussed in Section~\ref{sct:FEM3} ahead.
\end{remark}
We assume the following that 
the eigenvalues of diffusion matrix $M(\by)(\bxi)$ 
from \eqref{M,f} satisfy for $\by\in U_0$ and $\bxi\in D_\refd$
\begin{equation}\label{eq:lminmax}
0 < \lambda_{\min}(\by)\le\lambda_{\min}\big(M(\by)(\bxi)\big)
\le \lambda_{\max}\big(M(\by)(\bxi)\big) 
\le\lambda_{\max}(\by)<\infty\;.
\end{equation}
Also, the right-hand side $f(\by)\in L^2(D_{\refd})$.
\begin{remark}\label{rmk:minmax}
The preceding assumption holds for \eqref{a=exp}.
From the bounds \eqref{lambda_min}, \eqref{lambda_max} 
we find with  $a = \exp(b)$ as in \eqref{a=exp} that (cf. \eqref{h(theta)})
$h(\by) := a'(\by)/a(\by) = b'(\by)$ and
definition \eqref{eq:U0} of $U_0$ that
\begin{equation}\label{eq:minmaxbd}
\forall \by\in U_0: \quad 
\max\{\lambda_{\min}(\by)^{-1} , \lambda_{\max}(\by) \}
\leq 
2 + \| b'(\by) \|^2_{L^\infty(\TT)}  < \infty 
\;.
\end{equation}
For any $r>0$ exists $C_r>0$ such that
\begin{equation}\label{eq:minmaxbdr}
\forall \by\in U_0: \quad 
\max\{\lambda_{\min}(\by)^{-r} , (\lambda_{\max}(\by))^r \}
\leq 
C_r (1 + \| b'(\by) \|^{2r}_{L^\infty(\TT)})  < \infty 
\;.     
\end{equation}
\end{remark}

For $\by\in U_0$, we define $\hat{u}_h(\by)$ as solution of the parametric Galerkin equations:
\begin{equation}\label{myproblem3}
	\text{Seek $\hat{u}_h(\by)\in V_h$ such that}\
	B\big(\hat{u}_h(\by),v_h;\by\big) = L(v_h,\by)
	\ \text{for all $v_h\in V_h$}.
\end{equation}

\begin{theorem} \label{FeErr}
	For $\by \in U_1$ as defined in Remark~\ref{rmk:U1},
        the finite element solution $\hat{u}_h(\by)$ to 
	\eqref{myproblem1}--\eqref{myproblem3}
	satisfies the error estimate
	\begin{equation}\label{errorFEM1}
		\|\hat{u}(\by)-\hat{u}_h(\by)\|_V
		\lesssim \sqrt{\frac{\lambda_{\max}(\by)}{\lambda_{\min}(\by)}}
		\frac{c(\by)}
		{\lambda_{\min}^2(\by)} h\|f(\by)\|_{L^2(D_{\refd})},
		\quad 0<h\le h_0,
	\end{equation}
	for some $h_0 >0$.
        Here the constant hidden in $\lesssim$ is independent of $\by$, 
        and on
	\begin{equation}\label{M:W1,infty}
		c(\by) := \big\|M(\by)\big\|_{W^1_\infty(D_{\refd})}.
	\end{equation}
	Here, 
	$\big\|M(\by)\big\|_{W^1_\infty(D_{\refd})}$ 
	is a norm of the matrix of $W^1_\infty(D_{\refd})$-norms
	of elements $M_{ij}(\by)$ of $M(\by)$. 
\end{theorem}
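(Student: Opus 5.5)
The plan is to combine Céa's lemma with standard interpolation estimates and an $H^2$-regularity bound that tracks how the constants depend on $\lambda_{\min}(\by)$, $\lambda_{\max}(\by)$ and $c(\by)$.

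\emph{Step 1 (Céa).} By \eqref{eq:lminmax}, the form $B(\cdot,\cdot;\by)$ is coercive on $V$ with constant $\lambda_{\min}(\by)$ and bounded with constant $\lambda_{\max}(\by)$. I would use the energy-norm version of Céa's lemma: $B(\cdot,\cdot;\by)$ is symmetric, so the Galerkin solution is the $B$-orthogonal projection. This gives
\[
\|\hat{u}(\by)-\hat{u}_h(\by)\|_V \le \sqrt{\tfrac{\lambda_{\max}(\by)}{\lambda_{\min}(\by)}}\,\inf_{v_h\in V_h}\|\hat{u}(\by)-v_h\|_V ,
\]
which accounts for the square-root factor in \eqref{errorFEM1}.

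\emph{Step 2 (interpolation).} With Ženíšek's curved elements, so that no geometry error arises, the nodal interpolant satisfies
\[
\inf_{v_h\in V_h}\|\hat{u}(\by)-v_h\|_V\lesssim h\,|\hat{u}(\by)|_{H^2(D_\refd)}
\]
for $0<h\le h_0$. Here $h_0$ and the constant depend only on the mesh regularity and on $D_\refd$, not on $\by$.

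\emph{Step 3 ($H^2$-regularity).} For $\by\in U_1$ (Remark~\ref{rmk:U1}), the entries of $M(\by)$ are in $W^1_\infty(D_\refd)$. Expanding the divergence, $\hat{u}=\hat{u}(\by)$ satisfies
\[
-M(\by):\nabla^2\hat{u} = f(\by) + (\div M(\by))\cdot\nabla \hat{u}.
\]
\begin{itemize}
\item[(a)] I would first freeze coefficients or use a difference-quotient (Nirenberg) argument, localized by a partition of unity near the smooth boundary $\partial D_\refd$.
\item[(b)] Applying the $L^2$ estimate for uniformly elliptic operators with ellipticity $\lambda_{\min}(\by)$ yields
\[
|\hat{u}|_{H^2}\lesssim \lambda_{\min}(\by)^{-1}\bigl(\|f(\by)\|_{L^2}+c(\by)\|\hat{u}\|_{V}\bigr).
\]
\item[(c)] Finally, the Lax--Milgram bound $\|\hat{u}\|_V\le\lambda_{\min}(\by)^{-1}\|f(\by)\|_{L^2}$ gives
\[
|\hat{u}|_{H^2}\lesssim \frac{1+c(\by)}{\lambda_{\min}^2(\by)}\|f(\by)\|_{L^2}\lesssim \frac{c(\by)}{\lambda_{\min}^2(\by)}\|f(\by)\|_{L^2},
\]
where the last step uses $c(\by)\ge\|M(\by)\|_{L^\infty}\gtrsim\lambda_{\max}(\by)\ge\lambda_{\min}(\by)$. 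When $\lambda_{\min}(\by)\le 1$ this absorbs the constant $1$; otherwise the constant is absorbed after rescaling.
\end{itemize}

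Combining Steps 1--3 gives \eqref{errorFEM1}.

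The main obstacle is Step 3: obtaining the $H^2$ bound with \emph{explicit} dependence on $\lambda_{\min}$ and $c(\by)$, uniformly in $\by$. Standard regularity theorems hide this dependence inside their constants. The delicate part is the difference-quotient step in the tangential direction near the boundary. There the commutator of the difference quotient with $M(\by)$ produces a term of size $c(\by)\|\nabla\hat{u}\|$, and this term must be divided by $\lambda_{\min}(\by)$ exactly once. Recovering the normal second derivative from the equation costs one further factor of $\lambda_{\min}^{-1}$, and that factor must be shown to be absorbable into the stated power $\lambda_{\min}^{-2}$. I would carry out this bookkeeping carefully on the flattened annulus, using polar coordinates, where the boundary flattening is explicit and introduces only $\kappa$-dependent constants.
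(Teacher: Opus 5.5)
Your outline coincides with the paper's proof. It has four steps: energy-norm C\'ea for the factor $\sqrt{\lambda_{\max}(\by)/\lambda_{\min}(\by)}$, the $\mathcal{O}(h)$ interpolation bound, an $H^2$ estimate carrying one power of $\lambda_{\min}(\by)^{-1}$, and the Lax--Milgram bound. The paper obtains its version \eqref{eq:hatuH2} only by ``tracking constants'' in Gilbarg--Trudinger/Evans, and your (b) is a slightly sharper form of the same claim.

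The gap is in how you propose to prove (b). You note that recovering the normal second derivative from the equation costs a further factor $\lambda_{\min}(\by)^{-1}$. Generically it costs even $\lambda_{\max}(\by)/\lambda_{\min}(\by)$, since you divide by $M_{nn}\ge\lambda_{\min}$ while the other second derivatives carry coefficients up to $\lambda_{\max}$. You plan to absorb this factor into $\lambda_{\min}^{-2}$, but that is impossible. One power of $\lambda_{\min}^{-1}$ is already used in (b) and the other in the Lax--Milgram step, so this route ends at $c(\by)\lambda_{\min}(\by)^{-3}\|f(\by)\|_{L^2}$ or worse. The surplus factor is not uniformly bounded. Here $\det M(\by)\equiv 1$, so $\lambda_{\min}(\by)^{-1}=\lambda_{\max}(\by)\simeq 1+\|b'(\by)\|_{L^\infty(\TT)}^2$, which is unbounded on $U_1$. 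Yet the constant in \eqref{errorFEM1} must not depend on $\by$. The loss has to be avoided, not absorbed.

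What is missing is the observation that, for this particular $M$, the loss never arises. In the polar frame $(e_r,e_\theta)$, \eqref{M_a} is simply $\begin{bmatrix}1+h^2 & -h\\ -h & 1\end{bmatrix}$ with $h=b'(\by)(\theta)$. Hence the coefficient in the normal direction of both boundary circles is $M_{rr}=1+h^2\ge 1$. Expanding the divergence in polar coordinates gives
\[
(1+h^2)\,\partial_{rr}\hat u = -f(\by) + \frac{2h}{r}\,\partial_{r\theta}\hat u - \frac{1}{r^2}\,\partial_{\theta\theta}\hat u - \frac{1+h^2-h'}{r}\,\partial_r\hat u .
\]
Since $2|h|/(1+h^2)\le 1$ and $1/(1+h^2)\le 1$, $\partial_{rr}\hat u$ is controlled by $f(\by)$, by $\kappa$-dependent multiples of $\partial_{r\theta}\hat u$ and $\partial_{\theta\theta}\hat u$, and by $c(\by)|\partial_r\hat u|$. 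No factor $\lambda_{\min}^{-1}$ appears at all.

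The tangential part is also simpler than you planned. Since $\Dref$ is rotation invariant, $v=-D_\theta^{-s}D_\theta^{s}\hat u$ is an admissible test function on all of $\Dref$, with no partition of unity needed. Coercivity then gives $\|\nabla\partial_\theta\hat u\|\lesssim_\kappa\lambda_{\min}^{-1}\big(\|f(\by)\|+c(\by)\|\nabla\hat u\|\big)$. Together with the identity above, this yields (b) with a single power of $\lambda_{\min}^{-1}$.

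Finally, in (c) no ``rescaling'' is needed. Directly,
\[
\lambda_{\min}^{-1}\|f(\by)\|=\lambda_{\min}\cdot\lambda_{\min}^{-2}\|f(\by)\|\le\|M(\by)\|_{L^\infty}\lambda_{\min}^{-2}\|f(\by)\|\le c(\by)\lambda_{\min}^{-2}\|f(\by)\| .
\]
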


\begin{proof}
	In view of the ellipticity and continuity 
	constants $\lambda_{\min}(\by)$ and $\lambda_{\max}(\by)$, 
	respectively, 
        we obtain that for every $\by\in U_0$ there exists a unique 
        solution $\hat{u}_h(\by)\in V_h$ of \eqref{myproblem3}.
        C\'ea's lemma implies for every fixed $\by\in U_0$ 
        for the Galerkin solution 
        $\hat{u}_h(\by)\in V_h$ of \eqref{myproblem1}--\eqref{myproblem3} 
        the error estimate
	\[
	\|\hat{u}(\by)-\hat{u}_h(\by)\|_V
	\lesssim \sqrt{\frac{\lambda_{\max}(\by)}{\lambda_{\min}(\by)}}
	\inf_{v_h\in V_h}\|\hat{u}(\by)-v_h\|_V
	\]
	with a generic constant in $\lesssim$ that is independent of $\by$.
	If $\hat{u}(\by)\in H^2(D_{\refd})$, we thus obtain 
	\begin{equation}\label{eq:intermediate}
		\|\hat{u}(\by)-\hat{u}_h(\by)\|_V
		\lesssim \sqrt{\frac{\lambda_{\max}(\by)}{\lambda_{\min}(\by)}}
		h \|\hat{u}(\by)\|_{H^2(D_{\refd})}, \quad 0<h\le h_0,
	\end{equation}
	for some $h_0 >0$,
	by standard interpolation error estimates, see 
	\cite{Braess,Brenner/Scott} for example. 
	Since the 
	problem under consideration is $H^2(D_{\refd})$ regular
	if the coefficient matrix satisfies $M(\by)\in W^1_\infty(D_{\refd})$,
	we find by tracking carefully the constants in the proofs of 
	\cite[Thms.~8.8 \& 8.12]{Trud} or \cite[Sct.~6.3, Thms.~1 \& 4]{Evans}
	\begin{equation}\label{eq:hatuH2}
	\|\hat{u}(\by)\|_{H^2(D_{\refd})}
	\lesssim \frac{c(\by)}{\lambda_{\min}(\by)}
	\bigg(\|f(\by)\|_{L^2(D_{\refd})} + \|\hat{u}(\by)\|_{H^1(D_{\refd})}\bigg).
	\end{equation}
	Herein, the constant hidden in $\lesssim$ is again independent of $\by$,
        and $c(\by)$ has finite expectation under $\bgamma$.
	By using the standard stability estimate 
	\[
	\|\hat{u}(\by)\|_{V}
	\lesssim \frac{1}{\lambda_{\min}(\by)}\|f(\by)\|_{L^2(D_{\refd})},
	\]
        with constant in $\lesssim$ depending only in $D_{\refd}$
        we finally arrive at
	\[
	\|\hat{u}(\by)\|_{H^2(D_{\refd})}
	\lesssim \frac{c(\by)}{\lambda_{\min}^2(\by)}
	\|f(\by)\|_{L^2(D_{\refd})}.
	\]
	By inserting this estimate into \eqref{eq:intermediate}
	we arrive at \eqref{errorFEM1}.
\end{proof}
\subsection{Fully discrete finite element method}
\label{sct:FEM2}
If the expectation of the $\by$-dependent right-hand side of
the estimate \eqref{errorFEM1} is finite with respect to the 
Gaussian measure, then \eqref{errorFEM1} implies that 
we can solve the problem under consideration on a fixed 
finite element mesh with a sufficiently small mesh size $h$. 
Especially, due to Galerkin orthogonality, the solution by 
piecewise linear Lagrangian finite elements does not 
change if we replace the diffusion matrix $M(\by)$ on 
each finite element triangle $T$ by its mean. 
Nonetheless, in practice, we have to apply numerical quadrature. 

In what follows, we investigate the impact of numerical 
quadrature in form of the midpoint rule. 
This means that we replace the original variational formulation 
\eqref{myproblem1}--\eqref{myproblem3} by
\begin{equation}\label{myproblem3'}
	\text{Seek $\hat{u}_h(\by)\in V_h$ such that}\
	B_h\big(\hat{u}_h(\by),v_h;\by\big) = L_h(v_h,\by)
	\ \text{for all $v_h\in V_h$}
\end{equation}
with the bilinear form $B_h(\cdot,\cdot;\by)$ which is derived 
from the bilinear form $B(\cdot,\cdot;\by)$ by approximating 
all integrals over triangles $\triangle \in \mathcal{T}_h$ 
with the midpoint rule and likewise for the linear form $L_h(\cdot;\by)$.
\begin{theorem}\label{FeErrQ}
		Let $B_h(\cdot,\cdot;\by)$ and $L_h(\cdot;\by)$ be the fully
	discrete versions of $B(\cdot,\cdot;\by)$ and $L(\cdot;\by)$ 
	obtained by application of the midpoint rule. 

Then for $\by\in U_1$ as in Remark~\ref{rmk:U1},
     the parametric FE-solution $\hat{u}_h(\by)\in V_h$ 
     to the fully discrete problem \eqref{myproblem3'} 
     satisfies the error estimate
	\begin{equation}\label{errorFEM2}
		\|\hat{u}(\by)-\hat{u}_h(\by)\|_V \lesssim C(\by) h
		\|f(\by)\|_{W^1_\infty(D_{\refd})}
	\end{equation}
        with the constant hidden in $\lesssim$ independent of $\by$ and
	\[
	C(\by) = \bigg(1+\frac{\lambda_{\max}(\by)}{\lambda_{\min}(\by)}\bigg)
	\Bigg(\sqrt{\frac{\lambda_{\max}(\by)}{\lambda_{\min}(\by)}}
	\frac{c(\by)}{\lambda_{\min}^2(\by)}
	+ c({\by}) + 1\Bigg).
	\]
\end{theorem}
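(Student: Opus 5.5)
The plan is to use Strang's first lemma for variational crimes. First I would check that $B_h$ is uniformly elliptic on $V_h$. The midpoint rule evaluates $M(\by)$ at the barycenter $\bxi_\triangle$ of each triangle. Since $\nabla v_h$ is constant on $\triangle$, we get
\[
B_h(v_h,v_h;\by)=\sum_{\triangle}|\triangle|\, M(\by)(\bxi_\triangle)\nabla v_h|_\triangle\cdot\nabla v_h|_\triangle ,
\]
which is at least $\lambda_{\min}(\by)\|\nabla v_h\|^2_{L^2}$ by \eqref{eq:lminmax}. The same computation gives continuity with constant $\lambda_{\max}(\by)$. Strang's lemma then yields
\[
\|\hat u(\by)-\hat u_h(\by)\|_V\lesssim \Big(1+\tfrac{\lambda_{\max}(\by)}{\lambda_{\min}(\by)}\Big)\inf_{w_h\in V_h}\|\hat u(\by)-w_h\|_V
+\frac{1}{\lambda_{\min}(\by)}\Big(\sup_{v_h}\tfrac{|B(w_h,v_h)-B_h(w_h,v_h)|}{\|v_h\|_V}+\sup_{v_h}\tfrac{|L(v_h)-L_h(v_h)|}{\|v_h\|_V}\Big).
\]
Here I take $w_h$ to be the FE solution of \eqref{myproblem3} with exact integration, or its nodal interpolant.

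The approximation term is handled exactly as in the proof of Theorem~\ref{FeErr}. Combining \eqref{eq:intermediate} and \eqref{eq:hatuH2}, it is bounded by $h\sqrt{\lambda_{\max}/\lambda_{\min}}\,c(\by)\lambda_{\min}^{-2}\|f(\by)\|_{L^2}$. The $L^2$ norm of $f(\by)$ is in turn bounded by its $W^1_\infty$ norm, since $D_\refd$ is bounded. This produces the first summand of $C(\by)$.

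Next comes the quadrature consistency error for $B$. On each $\triangle$, the integrand $M\nabla w_h\cdot\nabla v_h$ has constant gradients, so the local error is
\[
\Big|\int_\triangle \big(M-M(\bxi_\triangle)\big)\nabla w_h\cdot\nabla v_h\Big|\le h\,|M|_{W^1_\infty(\triangle)}\,\|\nabla w_h\|_{L^2(\triangle)}\|\nabla v_h\|_{L^2(\triangle)} .
\]
Summing over triangles with Cauchy--Schwarz gives the bound $h\,c(\by)\|w_h\|_V\|v_h\|_V$. For $\|w_h\|_V$ I use the stability bound $\lesssim\lambda_{\min}^{-1}\|f\|$. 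The load term is similar: $|\int_\triangle (fv_h-(fv_h)(\bxi_\triangle))|$ has to be estimated for the product $fv_h$, where $v_h$ is only piecewise linear. Since $v_h$ is linear on $\triangle$, the midpoint rule integrates $v_h$ exactly. Writing $f=f(\bxi_\triangle)+(f-f(\bxi_\triangle))$ reduces the error to $\int_\triangle (f-f(\bxi_\triangle))(v_h-v_h(\bxi_\triangle))$. This is $\lesssim h^2|f|_{W^1_\infty}\|\nabla v_h\|_{L^1(\triangle)}$, and summing gives $\lesssim h\|f\|_{W^1_\infty}\|v_h\|_V$. These two estimates supply the $c(\by)+1$ part.

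The main obstacle is matching the exact structure of $C(\by)$, in particular the overall prefactor $(1+\lambda_{\max}/\lambda_{\min})$ and the powers of $\lambda_{\min}$ in the consistency terms. My Strang bound produces $\lambda_{\min}^{-1}$ and $\lambda_{\min}^{-2}$ factors in front of $c(\by)$ and $1$. I would absorb these either by normalizing, or by bounding $\lambda_{\min}^{-1}\le 1+\lambda_{\max}/\lambda_{\min}$ (using that $\lambda_{\max}\ge1$ here). If that fails, I accept a slightly modified $\by$-dependent constant, which is still polynomial in $\|b'(\by)\|_{L^\infty}$ by Remark~\ref{rmk:minmax}.
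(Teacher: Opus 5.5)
Your route is the paper's: midpoint coercivity of $B_h$, Strang's first lemma, elementwise midpoint consistency bounds, and \eqref{eq:intermediate}--\eqref{eq:hatuH2} for the approximation term. Your bookkeeping is sharper than the paper's. The paper puts $(1+\lambda_{\max}/\lambda_{\min})$ in front of all three Strang terms and leaves the factor $\|v_h\|_V$ in \eqref{eq:B_h} implicit. You instead use $\lambda_{\min}^{-1}$ on the consistency terms and $\|w_h\|_V\lesssim\lambda_{\min}^{-1}\|f(\by)\|_{L^2(D_\refd)}$ explicitly.

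Two steps fail as written. First, your load-term reduction is false. The correct splitting is
\[
\int_\triangle f v_h-|\triangle|\,f(\bxi_\triangle)v_h(\bxi_\triangle)
=\int_\triangle\big(f-f(\bxi_\triangle)\big)\big(v_h-v_h(\bxi_\triangle)\big)
+v_h(\bxi_\triangle)\int_\triangle\big(f-f(\bxi_\triangle)\big).
\]
You dropped the second term, which is the midpoint error of $f$ itself. For $f(\by)$ only in $W^1_\infty$ it is of order $h|\triangle|\,|v_h(\bxi_\triangle)|$, not $h^2$. Your local $h^2$ bound would in fact sum to $\mathcal{O}(h^2)$, not the $\mathcal{O}(h)$ you then state. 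The repair is simply not to subtract $v_h(\bxi_\triangle)$. Then $\big|\int_\triangle (f-f(\bxi_\triangle))v_h\big|\le h\,|f|_{W^1_\infty(\triangle)}\|v_h\|_{L^1(\triangle)}$, and summing with Cauchy--Schwarz and Poincar\'e gives $\lesssim h\|f(\by)\|_{W^1_\infty(D_\refd)}\|v_h\|_V$, which is all you need.

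Second, your ``main obstacle'' is not one, and you should not fall back on a modified constant. The $B$-consistency contribution $\lambda_{\min}^{-1}\cdot h\,c(\by)\cdot\lambda_{\min}^{-1}\|f(\by)\|_{L^2(D_\refd)}$ is already dominated by the first summand of $C(\by)$, because $(1+\lambda_{\max}/\lambda_{\min})\sqrt{\lambda_{\max}/\lambda_{\min}}\ge 1$. So you never need to produce the separate $c(\by)$ term.

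For the load contribution $\lambda_{\min}^{-1}h\|f(\by)\|_{W^1_\infty(D_\refd)}$, your inequality $\lambda_{\min}^{-1}\le 1+\lambda_{\max}/\lambda_{\min}$ is correct, but $\lambda_{\max}(\by)\ge 1$ must be justified. By \eqref{M_a}, $M(\by)(\bxi)$ is a rotation of a matrix with determinant $(1+h(\theta)^2)-h(\theta)^2=1$. Hence its two eigenvalues are reciprocal and the larger one is $\ge 1$. Alternatively, without this special structure: the diagonal entries satisfy $M_{ii}(\by)(\bxi)\ge\lambda_{\min}(\by)$, so $c(\by)\gtrsim\lambda_{\min}(\by)$ and $\lambda_{\min}^{-1}\lesssim c(\by)\lambda_{\min}^{-2}$, which the first summand again absorbs.

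With these two repairs your argument gives \eqref{errorFEM2} with exactly the stated $C(\by)$.
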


\begin{proof}
	The midpoint rule applied to the restriction of bilinear form 
	$B(v_h,v_h;\by)$ to an arbitrary triangle $\triangle \in \mathcal{T}_h$ 
	of the triangulation $\mathcal{T}_h$ is given by
	\[
	B_h\big|_\triangle(v_h,v_h;\by) = |\triangle| \nabla v_h(\bxi_\triangle)^\intercal 
	M(\by)(\bxi_\triangle)\nabla v_h(\bxi_\triangle),
	\]
	where the size $|\triangle|$ of the triangle $\triangle$ is
	the quadrature weight and the barycenter $\bxi_\triangle \in\triangle$ 
	of $\triangle$ is the only quadrature point. 
	Observe that $\nabla v_h|_\triangle$ is constant in $\triangle$.
	We find
	\begin{equation}\label{eq:CoerT}
		B_h\big|_\triangle(v_h,v_h;\by)	
		\ge |\triangle|\lambda_{\min}(\by)\|\nabla v_h(\bxi_\triangle)\|_2^2
		\\
		\simeq \lambda_{\min}(\by)\|\nabla v_h\|_{L^2(\triangle)}^2
	\end{equation}
	with a constant hidden in $\simeq$ that is independent of the mesh size $h$
	and of the parameter $\by$. 
	Summing \eqref{eq:CoerT} over all $\triangle\in \mathcal{T}_h$,
	the bilinear form $B_h(\cdot,\cdot;\by)$ is uniformly elliptic, i.e.,
	\[
	\forall v_h \in V_h: \quad B_h(v_h,v_h;\by)\gtrsim \lambda_{\min}(\by)\|v_h\|_V^2
	\]
	independent of the mesh size $h$. 
	The parametric matrix of the 
		linear system of equations which corresponds to 
		\eqref{myproblem3'} is,  for fixed $\by\in U_0$
		positive definite and symmetric
                uniformly with respect to $h$, 
		with smallest eigenvalue bounded from below by a constant
		that depends only on the quotient of 
		$\lambda_{\max}(\by)$ and $\lambda_{\min}(\by)$.
        Hence, for every $\by\in U_0$ exists a unique solution $\hat{u}_h\in V_h$ of 
            \eqref{myproblem3'}.
	Strang's first lemma (e.g.~\cite{Braess,Brenner/Scott,Ci}) implies 
	\begin{equation}\label{eq:Strang}
		\begin{aligned}
			&	\|\hat{u}(\by)-\hat{u}_h(\by)\|_V 
                        \lesssim \bigg(1+\frac{\lambda_{\max}(\by)}{\lambda_{\min}(\by)}\bigg)
			\inf_{v_h\in V_h}\bigg\{\|{\hat{u}(\by)}-v_h\|_V
                        \\
			&\quad+ \sup_{w_h\in V_h} \frac{|B(v_h,w_h;\by)-B_h(v_h,w_h;\by)|}{\|w_h\|_V}
			+ \sup_{w_h\in V_h} \frac{|L(w_h;\by)-L_h(w_h;\by)|}{\|w_h\|_V}\bigg\}.
		\end{aligned}
	\end{equation}
        We bound the three terms on the right-hand side. 
        The first term is the usual best-approximation error which is $\mathcal{O}(h)$ due to \eqref{eq:hatuH2} 
        using approximation properties of first order Lagrangian Finite Elements 
        on regular, quasi-uniform triangulations $\mathcal{T}_h$ of $D_\refd$, 
        and the (assumed) regularity $\hat{u}(\by)\in H^2(D_\refd)$.

        For $\big\|M(\by)\big\|_{ W^1_\infty(D_{\refd}) } < \infty$
        the midpoint rule on $\triangle \in \mathcal{T}_h$ 
	has the accuracy\footnote{ 
	We exploit here only the first order derivatives of the 
	integrand instead of the second order derivatives as this is 
	sufficient to get $\mathcal{O}(h)$ accuracy.}
	\[
	\Big|B\big|_\triangle(v_h,w_h;\by)-B_h|_\triangle(v_h,w_h;\by)\Big|
	\lesssim h |\triangle| \big\|M(\by)\big\|_{{W^1_\infty(\triangle)}}
	\|\nabla v_h\|_{{W^1_\infty}(\triangle)}\|\nabla w_h\|_{{W^1_\infty}(\triangle)}.
	\]
        
	Summing over $T\in \mathcal{T}_h$ bounds the second term as
	\begin{equation}\label{eq:B_h}
		\big|B(v_h,w_h;\by)-B_h(v_h,w_h;\by)\big|
		\\
		\lesssim h c({\bf y}) \|v_h\|_{V}\|w_h\|_{V}
	\end{equation}
	with the constant $c(\by)$ defined in\eqref{M:W1,infty}. 
     
        For the third term in the bound \eqref{eq:Strang} 
	the midpoint rule for $\triangle \in \mathcal{T}_h$ yields
	\[
	L_h\big|_\triangle(w_h;\by)
	= |\triangle| \big(f(\by)\big)(\bxi_\triangle)w_h(\bxi_\triangle).
	\]
	Hence, we arrive at the element-wise error bound
	\[
	\Big|L\big|_\triangle(w_h;\by)-L_h\big|_\triangle(w_h;\by)\Big|
	\\
	\lesssim h |\triangle|\|f(\by)\|_{W^1_\infty(\triangle)}
	\|w_h\|_{W^1_\infty(\triangle)}.
	\]
	Summing over $T\in \mathcal{T}_h$, we arrive at
	\begin{equation}\label{eq:L_h}
		\big|L(w_h;\by)-L_h(w_h;\by)\big|\\
		\lesssim h \|f(\by)\|_{W^1_\infty(D_\refd)}\|w_h\|_{V}.
	\end{equation}
	Inserting \eqref{errorFEM1}, \eqref{eq:B_h}, and \eqref{eq:L_h}
	into \eqref{eq:Strang} and employing the bound
	\[
	\|f(\by)\|_{L^2(D_\refd)}
	\lesssim \|f(\by)\|_{W^1_\infty(D_\refd)} 
	\]
	yields \eqref{errorFEM2}.
\end{proof}

Again, if the expectation of the right-hand side of the error 
estimate \eqref{errorFEM2} is finite with respect to the 
Gaussian measure, the proposed fully discrete finite element 
method is sufficient to approximate the parametric solution $\hat{u}(\by)$ 
in expectation.
\subsection{Application to PDEs on log-Gaussian domain}
\label{sct:FEM3}
The aforementioned theory of the fully discrete finite 
element method applies for instance directly to the solution 
of \eqref{eq:varformD_a} with log-Gaussian scaling $a$ as in 
\eqref{a=exp}, i.e.
$a(\by)(\theta) = \exp(b(\by)(\theta))$.

This means that it has been proven that piecewise linear ansatz
functions on a fixed finite element mesh in combination with a 
midpoint rule is sufficient to compute the expected solution at 
optimal rate $\mathcal{O}(h)$ with respect to the energy norm,
in expectation with respect to the Gaussian measure $\bgamma$ 
over the parameter set $U_1$ defined in Remark~\ref{rmk:U1}. 
In particular, 
\emph{numerical integration order} and \emph{\KL{ }truncation}
need not be path-dependent.
Due to \eqref{eq:lminmax} and the bounds \eqref{eq:minmaxbd}, 
for Example~\ref{example}, with small probability 
large values for $\lambda_{\max}(\by)/\lambda_{\min}(\by)$ can arise,
with corresponding large values of the condition number of the 
matrix corresponding to the parametric bilinear form 
$B_h(\cdot,\cdot;\by)$ in \eqref{myproblem3'}.

In our particular situation, 
we use the domain parametrization \eqref{a=exp} in \eqref{D_a}.
The generic reference domain $D_{\refd}$ in Theorems~\ref{FeErr}, \ref{FeErrQ} 
is any of $D_{\refd,\kappa}$ for $0\leq \kappa <1$. 
By Remark~\ref{rmk:minmax} and \eqref{M_a}, 
for $\by\in U_1\subset \RR^\NN$ it holds 
$M_{ij}(\by) \in C(\overline{D_{\refd,\kappa}}))$
for $0<\kappa<1$ and $i,j=1,2$. 
Furthermore, 
for $c(\by)$ in \eqref{M:W1,infty} holds $\mathbb{E}[c(\cdot)] < \infty$.
Also \eqref{M_a} shows that in this case $M_{ij}(\by) \in C(\TT)$ for $\by\in U_1$.

The polar parametrization \eqref{eq:dom_map} of the random geometry map 
which underlies the homothetic transformation can induce singularities
in $M(a)$ at the origin when $\kappa = 0$.
On the one hand, 
we can estimate for $\by\in U_1$ in Remark~\ref{rmk:U1}
\begin{align*}
	\|f(\by)\|_{W_\infty^1(D_{\refd})} 
	&= \|J(\by) (f\circ F)(\by)\|_{W_\infty^1({D_{\refd,\kappa}})} \\
	&\lesssim \|J(\by)\|_{W_\infty^1({D_{\refd,\kappa}})}\|(f\circ F)(\by)\|_{W_\infty^1({D_{\refd,\kappa}})} \\
	&\lesssim \|a(\by) \|_{W_\infty^1(\TT)}^2 \|f\|_{W_\infty^1(\RR^2)}
\end{align*}
which bounds 
$\|f(\by)\|_{W_\infty^1(D_\refd)}$ in \eqref{eq:L_h} deterministically for $\by\in U_1$.

On the other hand, for $\kappa=0$, 
the domain mapping \eqref{eq:dom_map} 
does not amount to a coefficient matrix $M(a)$ 
that is in $W_{\infty}^1(D_{\refd,0})$ -- 
only for $0<\kappa<1$ it is in 
$C^1\big(\overline{D_{\refd,\kappa}}\big)$ provided that $a \in C^2(\TT)$. 
Indeed, 
when differentiating \eqref{M_a} (with respect to $\bxi$), 
one arrives at a matrix that contains the factor $1/r$, 
which implies an $1/r$-singularity at the origin, 
compare \eqref{M_a}. 
Hence, the constant $c(\by)$ in \eqref{M:W1,infty} depends on $1/\kappa$ 
and the estimate \eqref{errorFEM1} holds 
only in $D_{\refd,\kappa}$ for $\kappa > 0$.
As a consequence, 
on $D_{\refd,0}$ a uniform mesh $\mathcal{T}_h$ 
yields only a reduced order of convergence of the finite element method. 
In our numerical experiments,
we refine the finite element mesh towards the origin to overcome this obstruction.
\begin{remark}\label{rmk:OptinE}
For random shapes generated by a log-Gaussian 
parametrization \eqref{a=trigonometric}, 
$\by$-uniform bounds in the
FE error analysis with iterative solvers can not be expected.
However,
the \emph{numerical complexity scales optimally in expectation 
(i.e. on average with respect to $\bgamma$)}, 
i.e., linearly with respect to the number $N$ of degrees of
freedom, when using multigrid solvers \cite{Lukas}.
\end{remark}
\section{Numerical experiment}
\label{sct:experiment}
In our numerical experiment, we consider the 
right-hand side $f({\bx})=\exp(-\|\bx\|_2)$
 and the domain perturbation \eqref{eq:dom_map} with $\kappa=0$ and
\begin{equation}\label{eq:Fourier}
a(\by)(\theta) = \exp\Bigg(y_0\lambda_0 + \sum_{k=1}^\infty 
y_k  \lambda_k \cos (k\theta) 
+ y_{-k} \lambda_{-k}\sin (k\theta)  \Bigg)
\end{equation}
as in Example~\ref{example}. 
The particular series $(\lambda_k)_{k\in \ZZ}$ we prescribe is
\begin{equation}\label{eq:lkEple}
\lambda_0 = \frac{1}{8} \quad\text{and}\quad 
\lambda_k = \frac{1}{(|k|+1)^3}\ \text{for}\ k\in\mathbb{Z}\setminus\{0\}.
\end{equation}
The sum in the Fourier series \eqref{eq:Fourier} is truncated 
after $K=199$ terms, which yields the dominant 399 terms of
the series, i.e.~we used
\[
a^K(\by)(\theta) := \exp\Bigg(b^K(\by)(\theta)\Bigg) 
\]
in our numerical experiments. 
This was sufficiently accurate for our tests, and
choosing larger values of $K$ does not change the results.
Remark that in Theorem~\ref{errNQMC}, $m = \mathcal{O}(K)$.

Accordingly, besides the errors due to FE discretization error and numerical integration
in computing the bilinear form  $B_h(\cdot,\cdot;\by)$ in \eqref{myproblem3'}, 
we incur a further error due to replacing $a$ by $a^K$, resulting in the matrix 
$M^K(\by) := M(a^K(\by))$ with $M(a^K)$ as defined in \eqref{M_a}.

The summability condition is
satisfied with ${B_\brho}\approx 0.876$ if we plug $\rho_k = 1$
for all $k\in\ZZ$ into \eqref{kappa2}. 
However, the decay of 
the series $(\lambda_k)_{k\in\ZZ}$ is slower than required to
satisfy the assumptions of Theorem~\ref{errNQMC} for the 
convergence of the quasi-Monte Carlo method. Indeed, it has 
already been observed in \cite{QMC} that the resulting condition 
for $(\rho_k)_{k\in\ZZ}$ is not sharp.

We introduce a shape regular triangular mesh of the unit disk
$D_{\refd,0}$ by using Zenisek's curved triangles, see \cite{Z}.
The mesh is a-priorily graded towards the origin as seen in 
Figure~\ref{fig:realizations}. On this triangulation, we define 
piecewise linear Lagrangian finite element functions. For 
our experiments, we use a triangulation with about 20000 
triangles which leads to about 10000 finite element basis 
functions. The solver we use is based on a conjugate 
gradient method which is preconditioned with the BPX 
preconditioner, compare \cite{BPX,Brenner/Scott}.
Four realizations of the random domain under 
consideration and the associated solution of the Poisson 
problem are found in Figure~\ref{fig:realizations}, where 
the mesh refinement towards the origin are clearly visible.

\begin{figure}[hbt]
	\begin{center}
		\includegraphics[width=0.45\textwidth,trim={170 40 150 30},clip]{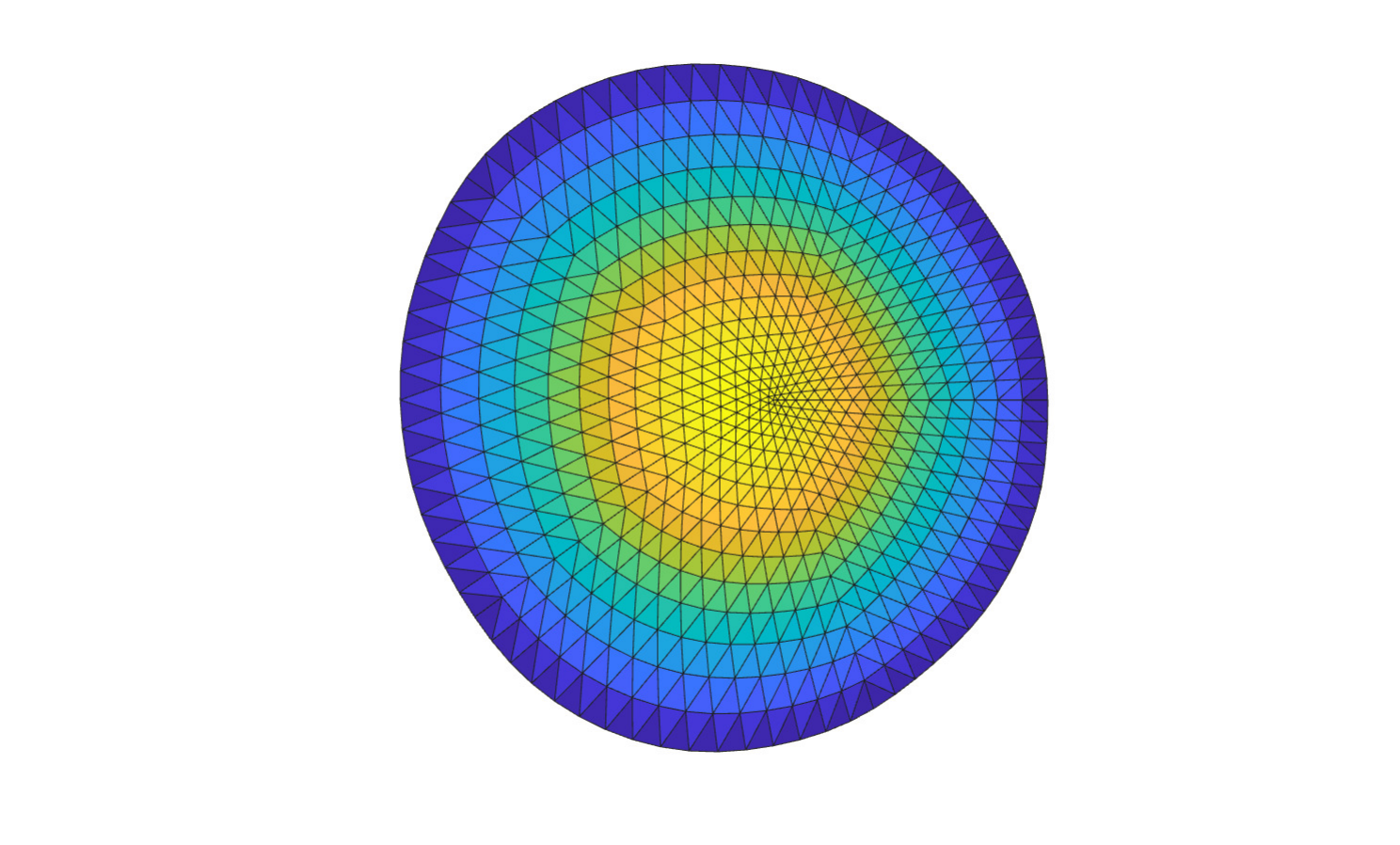}\qquad
		\includegraphics[width=0.45\textwidth,trim={170 40 150 30},clip]{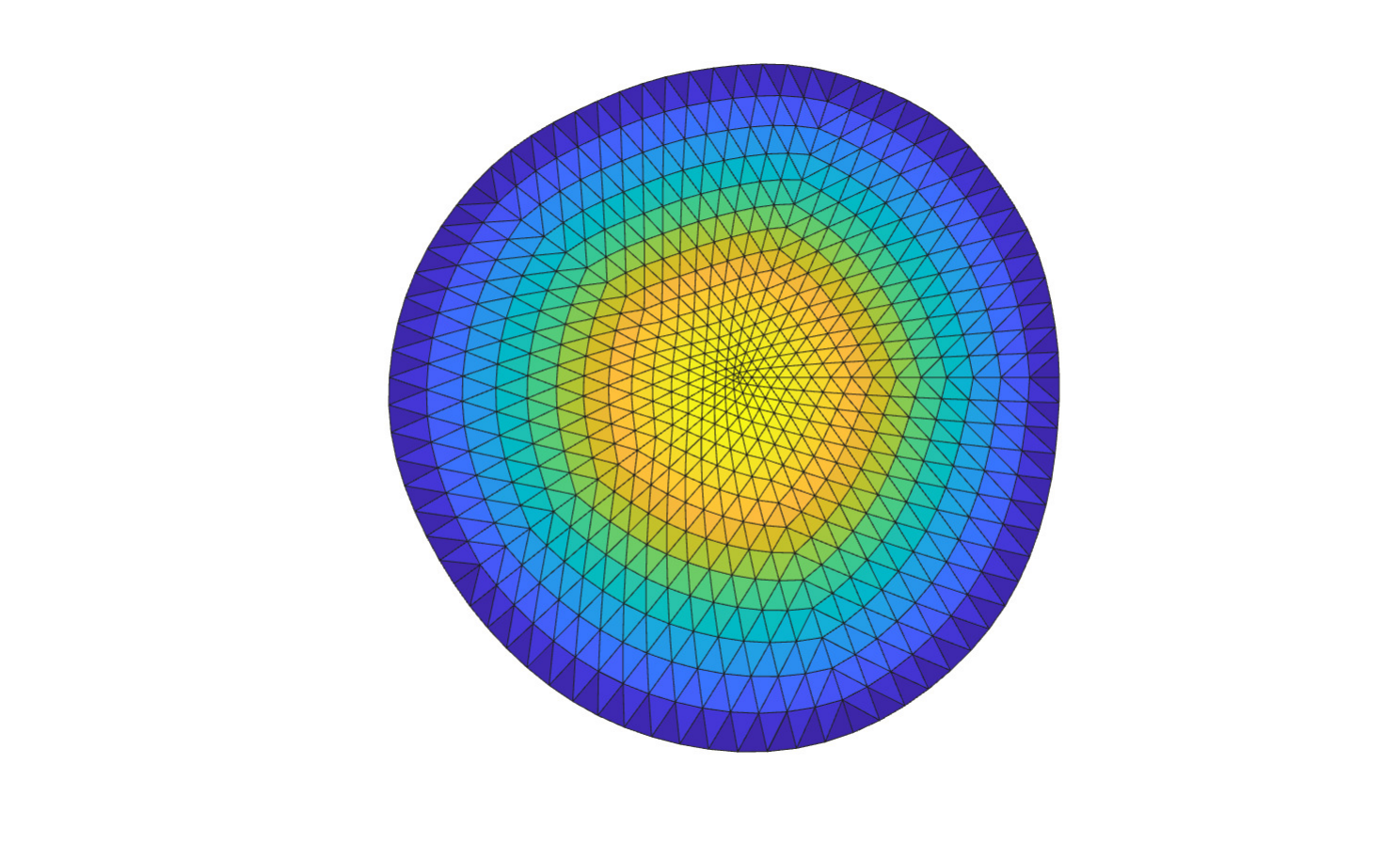}\\
		\includegraphics[width=0.45\textwidth,trim={170 40 150 30},clip]{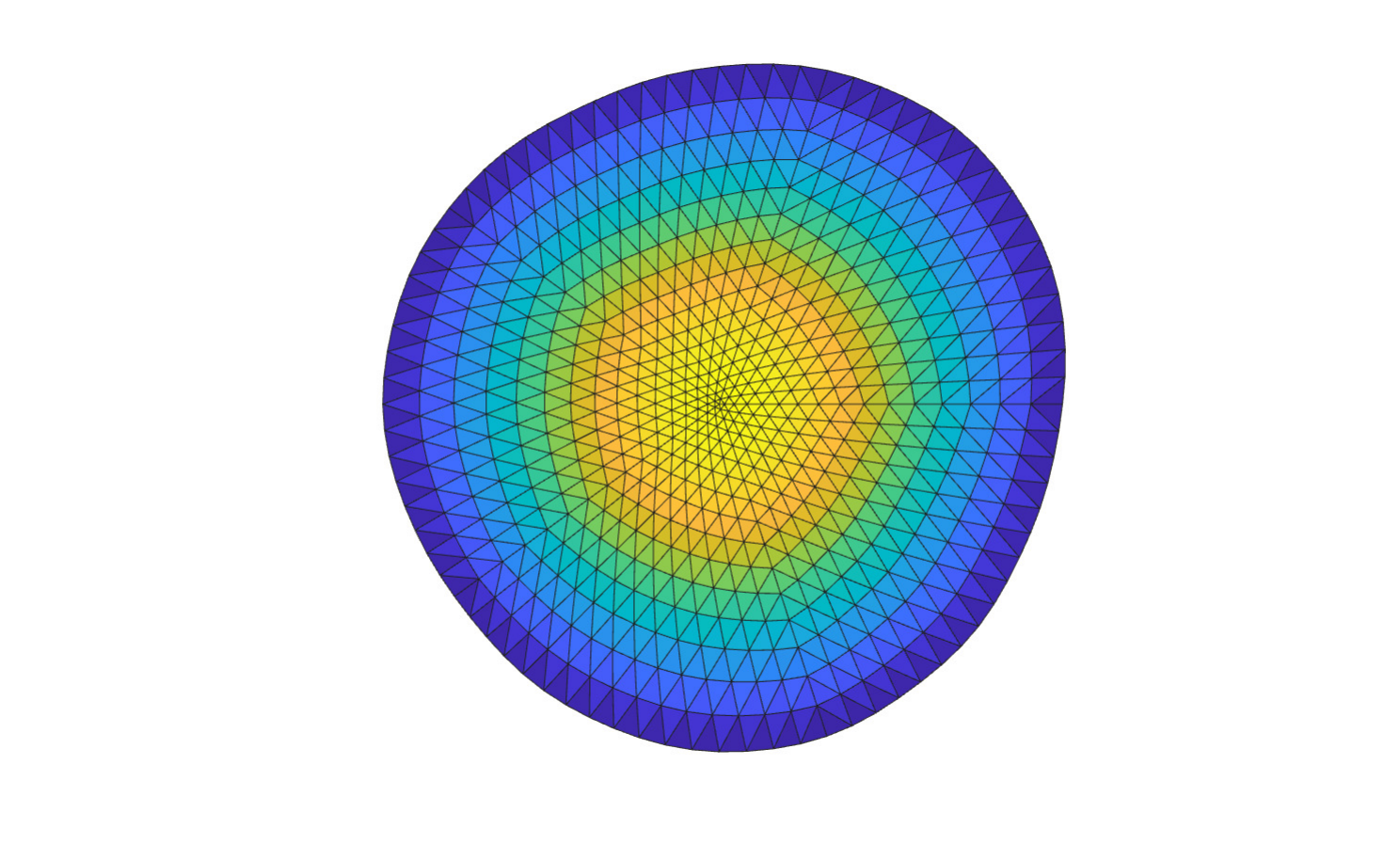}\qquad
		\includegraphics[width=0.45\textwidth,trim={170 40 150 30},clip]{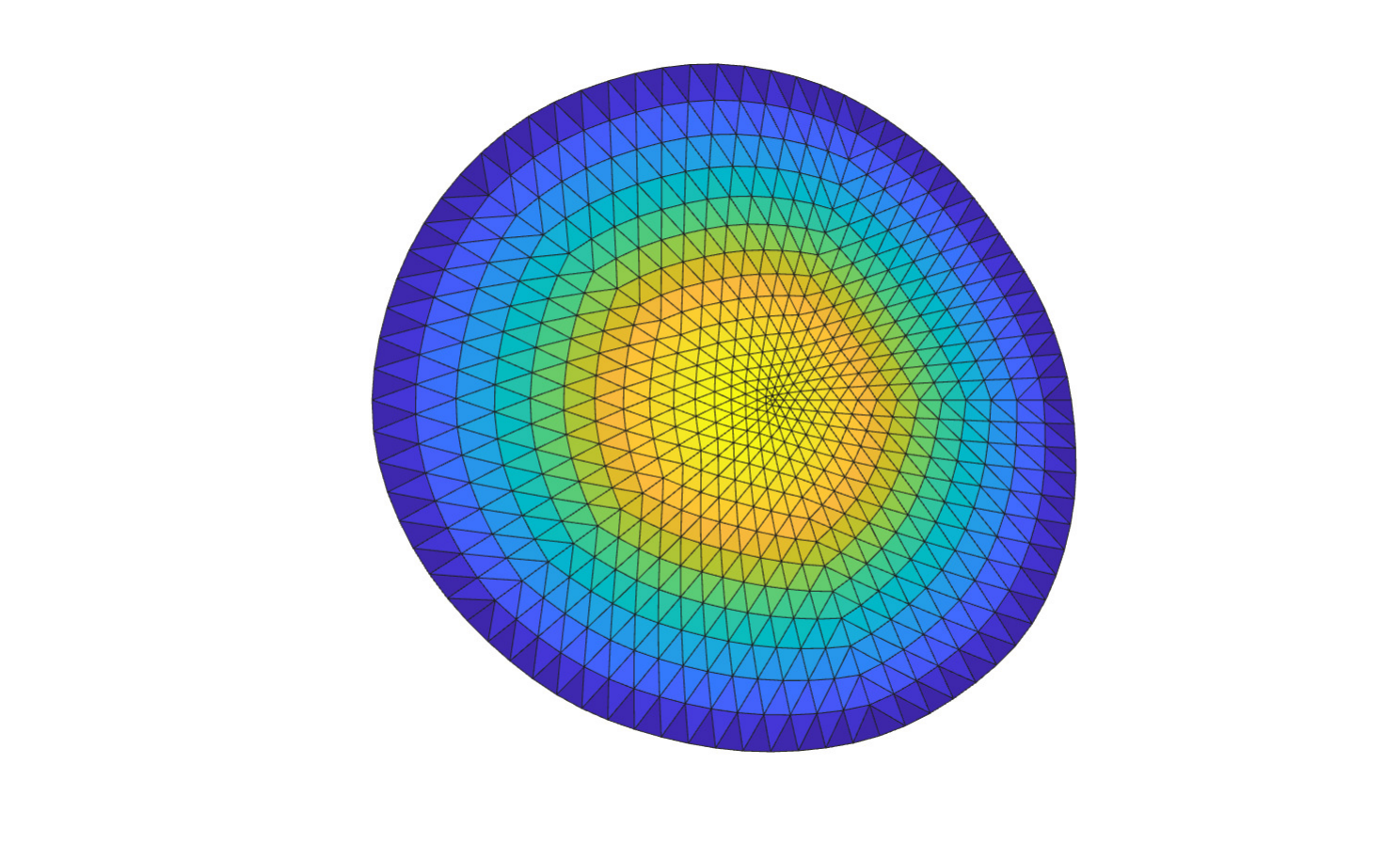}
		\begin{caption}{\label{fig:realizations}%
                                Four realizations of the random domain under consideration with 
				the associated solution of the underlying Poisson equation. 
                                Shape parametrization \eqref{a=trigonometric} with $\kappa = 0$ and \eqref{eq:lkEple}.
                                The 
				triangulation is the mapped version of the triangulation of the unit disk.
				The grading towards the origin is clearly visible.}
		\end{caption}
	\end{center}
\end{figure}

\begin{figure}[htb]
	\begin{center}
		\pgfplotsset{width=0.70\textwidth, height=0.60\textwidth}
		\begin{tikzpicture}
			\begin{loglogaxis}[grid, ymin= 0.00001, ymax = 0.1, xmin = 100, xmax = 1e6, 
				ytick={0.00001,0.0001,0.001,0.01,0.1,1},
				legend style={at={(0.98,0.90)},anchor=east},
				xtick={10,100,1000,10000,10000, 1e5,1e6}, ylabel={approximation error}, 
				xlabel={number $N$ of sampling points}]
				\addplot[line width=0.7pt,color=red,mark=*] table[x=N,y=error]{
					N error asymp1 asymp2 
					100  3.0394e-02 0.02 0.6
					200  1.7574e-02 0.01 0.3
					500  8.1733e-03 0.004 0.12
					1000  5.2421e-03 0.002 0.06
					2000  2.4062e-03 0.001 0.03
					5000  1.6035e-03 0.0004 0.012
					10000  1.1510e-03 0.0002 0.006
					20000  6.3519e-04 0.0001 0.003
					50000  2.9024e-04 0.00004 0.0012
					100000  1.3238e-04 0.00002 0.0006
					200000  9.0355e-05 0.00001 0.0003
					500000  2.6807e-05 0.000004 0.00012
					1000000  1.1338e-05 0.000002 0.00006
				};
				\addplot[line width=0.7pt,color=black,dotted] table[x=N,y=asymp1]{
					N error asymp1 asymp2 
					100  3.0394e-02 0.02 0.6
					200  1.7574e-02 0.01 0.3
					500  8.1733e-03 0.004 0.12
					1000  5.2421e-03 0.002 0.06
					2000  2.4062e-03 0.001 0.03
					5000  1.6035e-03 0.0004 0.012
					10000  1.1510e-03 0.0002 0.006
					20000  6.3519e-04 0.0001 0.003
					50000  2.9024e-04 0.00004 0.0012
					100000  1.3238e-04 0.00002 0.0006
					200000  9.0355e-05 0.00001 0.0003
					500000  2.6807e-05 0.000004 0.00012
					1000000  1.1338e-05 0.000002 0.00006
				};
				\addplot[line width=0.7pt,color=black,dotted] table[x=N,y=asymp2]{
					N error asymp1 asymp2 
					100  3.0394e-02 0.02 0.6
					200  1.7574e-02 0.01 0.3
					500  8.1733e-03 0.004 0.12
					1000  5.2421e-03 0.002 0.06
					2000  2.4062e-03 0.001 0.03
					5000  1.6035e-03 0.0004 0.012
					10000  1.1510e-03 0.0002 0.006
					20000  6.3519e-04 0.0001 0.003
					50000  2.9024e-04 0.00004 0.0012
					100000  1.3238e-04 0.00002 0.0006
					200000  9.0355e-05 0.00001 0.0003
					500000  2.6807e-05 0.000004 0.00012
					1000000  1.1338e-05 0.000002 0.00006
				};
				\addlegendentry{QMC error};
				\addlegendentry{Asymptote $N^{-1}$};
			\end{loglogaxis}
		\end{tikzpicture}
		\begin{caption}{\label{fig:error}%
				Error of the approximation to $\|\mathbb{E}[\hat{u}]\|_{H^1(D_{\refd,0})}$
				of the quasi-Monte Carlo method versus the number $N$ of sampling points.}
		\end{caption}
	\end{center}
\end{figure}

We run our experiments and measure the rate of convergence
realized by the quasi-Monte Carlo method. As we do not know the
exact solution, we compute a numerical reference solution using $2\cdot 10^6$ 
sample points which serves as reference solution. Indeed, as one 
can see in Figure~\ref{fig:error}, one observes a convergence 
rate that is nearly linear convergence \eqref{eq:QMCerror} as 
predicted from the theory in \cite{QMC}. 
The computed expectation of the random solution is shown in 
Figure~\ref{fig:expectation}. 
Note that all $2$ million domain samples are 
contained in the disk of radius $2$ centered at the origin.
\begin{figure}[hbt]
	\begin{center}
		\includegraphics[width=0.7\textwidth,trim={170 40 170 100},clip]{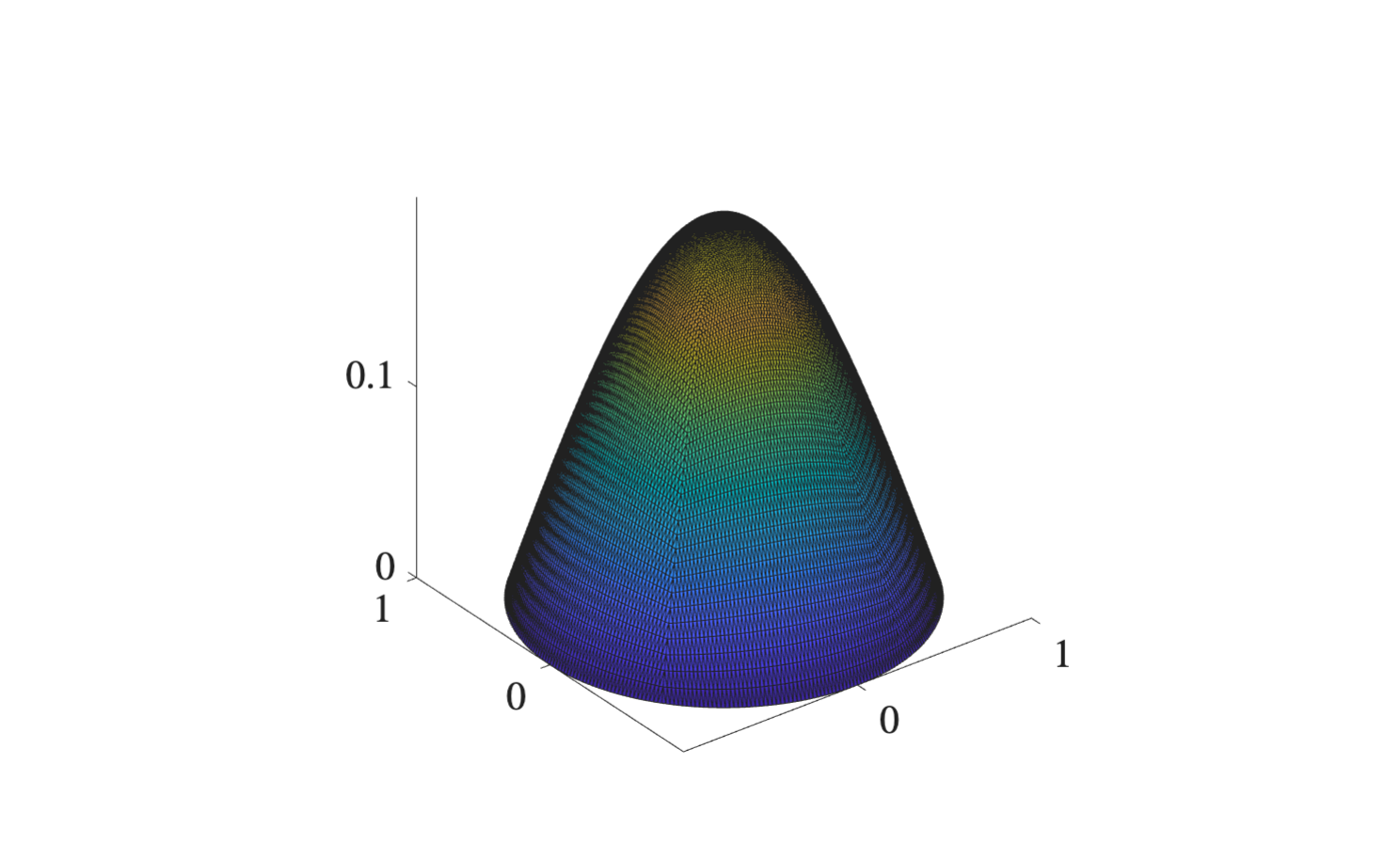}
		\begin{caption}{\label{fig:expectation}%
				The expectation of the random solution on the unit disk.}
		\end{caption}
	\end{center}
\end{figure}
\section{Conclusion}
\label{sct:conclusio}
In the present article, we studied second order elliptic boundary 
value problems on a class of random domains. The domains we
considered are homothetic with respect to a 
reference domain via a lognormally distributed
radial dilation function. 
Since the domain variations can be large 
with small probability we employed the domain mapping 
method to transform the problem posed on the random domain 
into a problem on the deterministic reference domain. 
We then have shown 
the well-posedness of the pullback problem and verified the 
analytic dependency of the random solution on the random 
input parameter by means of holomorphy arguments.

We emphasize that our particular model of the random 
domain can be straightforwardly extended. 
For example, the radial function can be modified by
\[
a(\by)(\theta) = r(\theta) 
+ \exp\Bigg(\sum_{k\in\mathbb{N}} y_k  \psi_k(\theta)\Bigg)
\]
to account for domains whose 
boundaries are homothetic, lognormal scalings 
	with respect to the radial coordinate function $r(\theta)$
	of a reference domain $D_{\refd,\kappa}$ 
	with Lipschitz boundary $\partial D_{\refd,\kappa}$, such as
	$(-1,1)^2 \backslash [-1/2,1/2]^2$.

Although the presently considered Gaussian 
geometric shape uncertainty model 
could be viewed as rather special,
it served to elucidate the mathematical and computational issues 
in numerical approximation. The present findings are expected to 
be relevant also in other contexts.
E.g.,  one can also consider the unit 
square $(0,1)^2$ as reference domain and the random domain
\[
\big\{(x_1,x_2)\in\mathbb{R}^2: 0<x_1<1,\ 
0<x_2<a(\by)(x_1)\big\}
\]
with 
\[
a(\by)(x_1) = \exp\Bigg(\sum_{k\in\mathbb{N}} y_k  \psi_k(x_1)\Bigg),
\]
where $\{\psi_k\}$ are (non-periodic) functions defined on 
the interval $(0,1)$. 
Similarly, 
the trigonometric uncertainty parametrization 
in Example~\ref{example} may be replaced by a localized one, 
e.g. of L\'{e}vy-Cieselski type.
An extension to the three dimensional setting is also possible.

We also discussed the numerical solution of the random 
boundary value problem by using finite elements in space.
We have shown that a fixed discretization suffices for the 
spatial variable. In the random parameter, we used here 
a quasi-Monte Carlo method, but also a sparse grid quadrature 
or higher order quasi-Monte Carlo methods are applicable,
see \cite{QMC2,Zung,QMC3} for example. By some numerical
experiments, we finally validated our theoretical findings.

\subsection*{Acknowledgement}
This research was funded by the Swiss National Science Foundation (SNSF) 
and the Vietnam National Foundation for Science and Technology Development 
(NAFOSTED) through the Vietnamese-Swiss Joint Research Project IZVSZ2\_229568.
It was done in part when the authors were working at the 
Vietnam Institute for Advanced Study in Mathematics (VIASM) in Hanoi, Vietnam. 
They thank VIASM for providing a fruitful research environment and working condition.
Finally, HH gratefully acknowledges the support and hospitality of the Information 
Technology Institute (ITI) of the Vietnamese National University, Hanoi. 
\bibliographystyle{plain}

\end{document}